\documentclass[12pt]{article}
\usepackage[utf8]{inputenc}
\usepackage{dcolumn,lscape}
\usepackage{amsmath,longtable,multicol,dcolumn,tabularx,graphicx,amssymb}
\usepackage{exscale,amsthm}
\usepackage{natbib}
\usepackage{tikz,dsfont}
\textwidth 17.cm \textheight 23.6cm \topmargin -.77in
\evensidemargin 0.cm \oddsidemargin 0.cm
\parskip 0.5ex plus 0.01ex minus 0.01ex
\parindent 1.0cm

\setlength{\unitlength}{1cm}

\newcommand{\xvec}{\boldsymbol}
\newcommand{\xmat}{\mathbf}
\newcommand{\xset}{\mathds}

\newtheorem{theorem}{Theorem}
\newtheorem{example}{Example}

\newtheorem{lemma}{Lemma}

\bibliographystyle{agsm}

\begin{document}

\def\spacingset#1{\renewcommand{\baselinestretch}%
{#1}\small\normalsize} \spacingset{1}


\title{\bf Generalized Spatial and Spatiotemporal Autoregressive Conditional Heteroscedasticity}
  \author{Philipp Otto \footnote{Corresponding author (email: potto@europa-uni.de)}\\
    Department of Quantitative Methods, \\
    European University Viadrina, Frankfurt (Oder), Germany\\
    and \\
    Wolfgang Schmid \\
    Department of Quantitative Methods, \\
    European University Viadrina, Frankfurt (Oder), Germany\\
    and \\
    Robert Garthoff,\\
    Statistisches Landesamt des Freistaates Sachsen, Kamenz, Germany}
  \maketitle

\begin{abstract}
In this paper, we introduce a new spatial model that incorporates heteroscedastic variance depending on neighboring locations. The proposed process is regarded as the spatial equivalent to the temporal autoregressive conditional heteroscedasticity (ARCH) model. We show additionally how the introduced spatial ARCH model can be used in spatiotemporal settings. In contrast to the temporal ARCH model, in which the distribution is known given the full information set of the prior periods, the distribution is not straightforward in the spatial and spatiotemporal setting. However, it is possible to estimate the parameters of the model using the maximum-likelihood approach. Via Monte Carlo simulations, we demonstrate the performance of the estimator for a specific spatial weighting matrix. Moreover, we combine the known spatial auto\-regressive model with the spatial ARCH model assuming heteroscedastic errors. Eventually, the proposed autoregressive process is illustrated using an empirical example. Specifically, we model lung cancer mortality in 3108 U.S. counties and compare the introduced model with two benchmark approaches.
\end{abstract}

\noindent%
{\it Keywords:}  lung cancer mortality, SARspARCH model, spatial ARCH model, variance clusters.
\vfill

\newpage
\spacingset{1.45} 

\section{Introduction}\label{sec:introduction}

Various specifications of spatial autoregressive models have been proposed in past and current literature (cf. \citealt{Anselin10}). In particular, the spatial models introduced by \cite{Whittle54} were extended to incorporate external regressors (see, e.g., \citealt{Elhorst10} for an overview), and autocorrelated residuals (e.g., \citealt{Fingleton08}), respectively. Currently, these spatial models are widely implemented in statistical software packages such that it is simple to model spatial clusters of high and low observations. Consequently, a wide range of applications can be found in empirical research, including econometrics (e.g., \citealt{Holly10a,Fingleton08b}), biometrics (e.g., \citealt{Shinkareva06,Ho05,Macnab01}) or environmetrics (e.g., \citealt{Fasso11,Fasso07,Fuentes01}).

However, spatial models that assume spatially dependent second-order moments, such as the well-known autoregressive conditional heteroscedasticity (ARCH) and generalized ARCH (GARCH) models in time series analysis proposed by \cite{Engle82} and \cite{Bollerslev86}, have not been previously discussed. \cite{Borovkova12} and \cite{Caporin06} introduced a temporal GARCH model, which includes temporal lags influenced by neighboring observations. Regarding the two-dimensional setting, \cite{Bera04} suggested a special type of a spatial ARCH model, the SARCH(1) process, that results from employing the information matrix (IM) test statistic in a simple spatial autoregressive (SAR) model. Furthermore, \cite{Noiboar05} and \cite{Noiboar07} introduced a multidimensional GARCH process to detect image anomalies. However, present extensions consider only special approaches, and no general model has been presented. Moreover, there is no strict analytical analysis of the introduced models, and
it appears that generalization of an ARCH or GARCH model to the multidimensional setting is not straightforward.

To motivate the need for a spatial ARCH model, we consider the following empirical example. In Figure \ref{fig:motivation}, the population density of all U.S. counties excluding Alaska and Hawaii ($n = 3108$) is plotted on the map. The data are from the 2010 census. Obviously, there are clusters of high population density around metropolitan areas and clusters of low population density elsewhere. This behavior can be modeled by a spatial autoregressive process; i.e., the observations are assumed to be influenced by their neighbors. The dependence between the observations can be modeled via a so-called spatial weighting matrix $\xmat{W}$. Moreover, a simple spatial autoregressive process includes an autoregressive parameter $\lambda$. Fitting the U.S. census data using this type of process leads to a model with a positive spatial correlation of $\hat{\lambda} = 0.8578$; i.e., the process identifies clusters of high and low values. This finding is not surprising. However, if we focus on the estimated residuals of the process, we observe that they are not homoscedastic but rather exhibit clusters of high and low variances, whereas the mean of the residuals is zero and not spatially autocorrelated. This means that we observe clustering behavior in the conditional spatial variances but not in the conditional means. For spatial autoregressive processes, the conditional variance is also not constant over space. However, the conditional variance of each location is independent of the variance of the surrounding locations; it depends only on the spatial weights. Thus, a new nonlinear attempt, a spatial process for conditional heteroscedasticity, is needed to achieve the required flexibility of the model. To illustrate these variance clusters, we computed for each county the sample standard deviation of the residuals lying within a radius of 500 $km$ (310.686 $mi$). In Figure \ref{fig:motivation}, the conditional sample standard deviation is visualized on the map (below, left) and by means of a simple histogram (below, right). Obviously, we observe two major clusters of the residual's variance: the variance is higher in the Eastern United States compared with the Western United States. Moreover, these two clusters are also obvious in the histogram. The estimated variance of the error process is $\hat{\sigma_\xi}^2 = 0.9104$. Certainly, the appearance of these clusters depends on the choice of the distance used for the calculation of the sample standard deviation. However, when other distances are used, we observe the same behavior.

\begin{figure}
  \centering
    \includegraphics[width=0.55\textwidth,natwidth=1000,natheight=1000, trim = 0cm 4cm 0cm 0cm, clip = true]{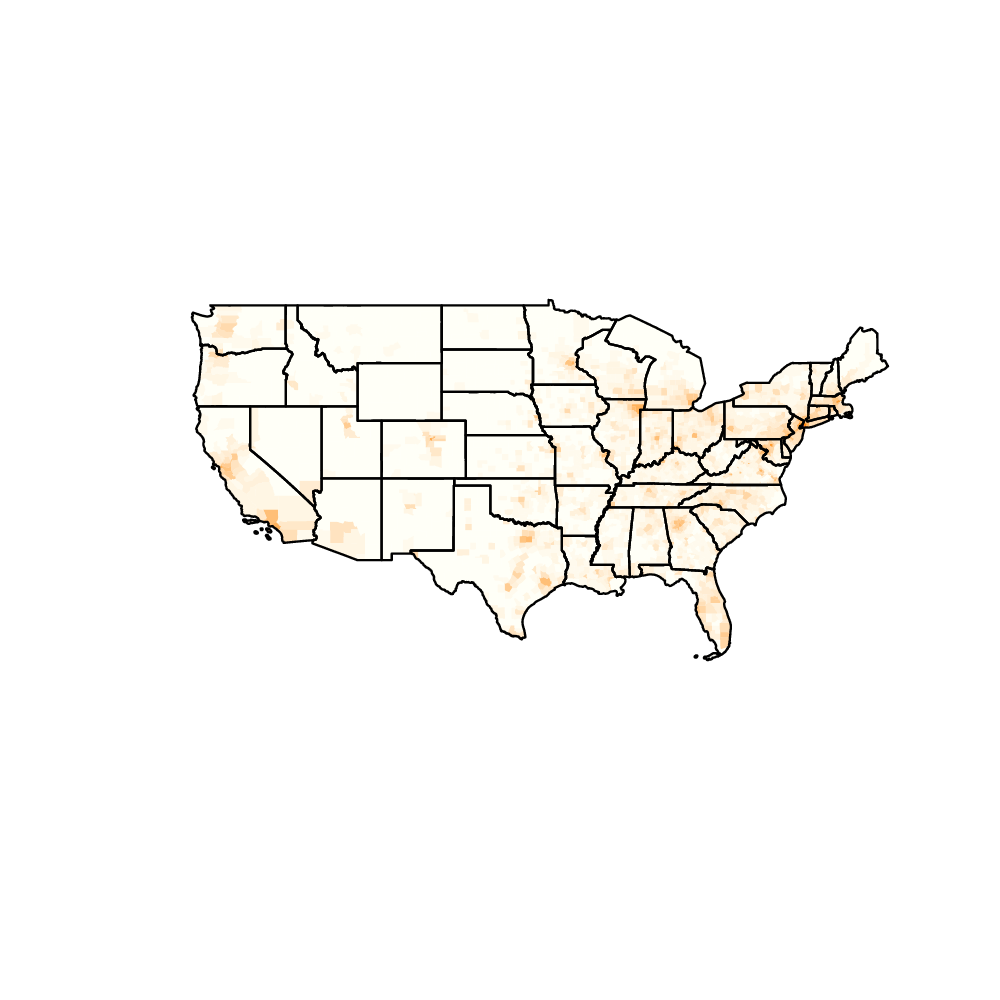}\\[-3cm]
    \includegraphics[width=0.4\textwidth,natwidth=1000,natheight=1000, trim = 0cm 4cm 0cm 0cm, clip = true]{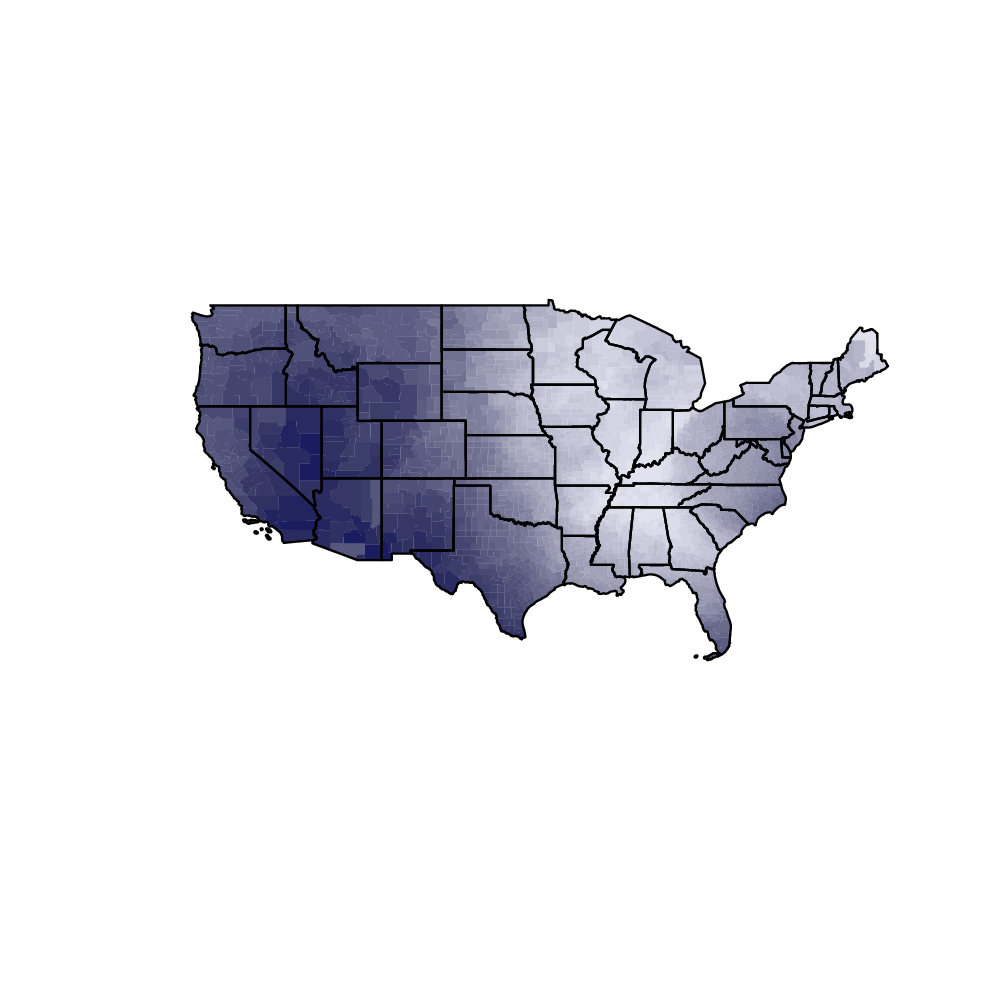}
    \includegraphics[width=0.4\textwidth,natwidth=500,natheight=500]{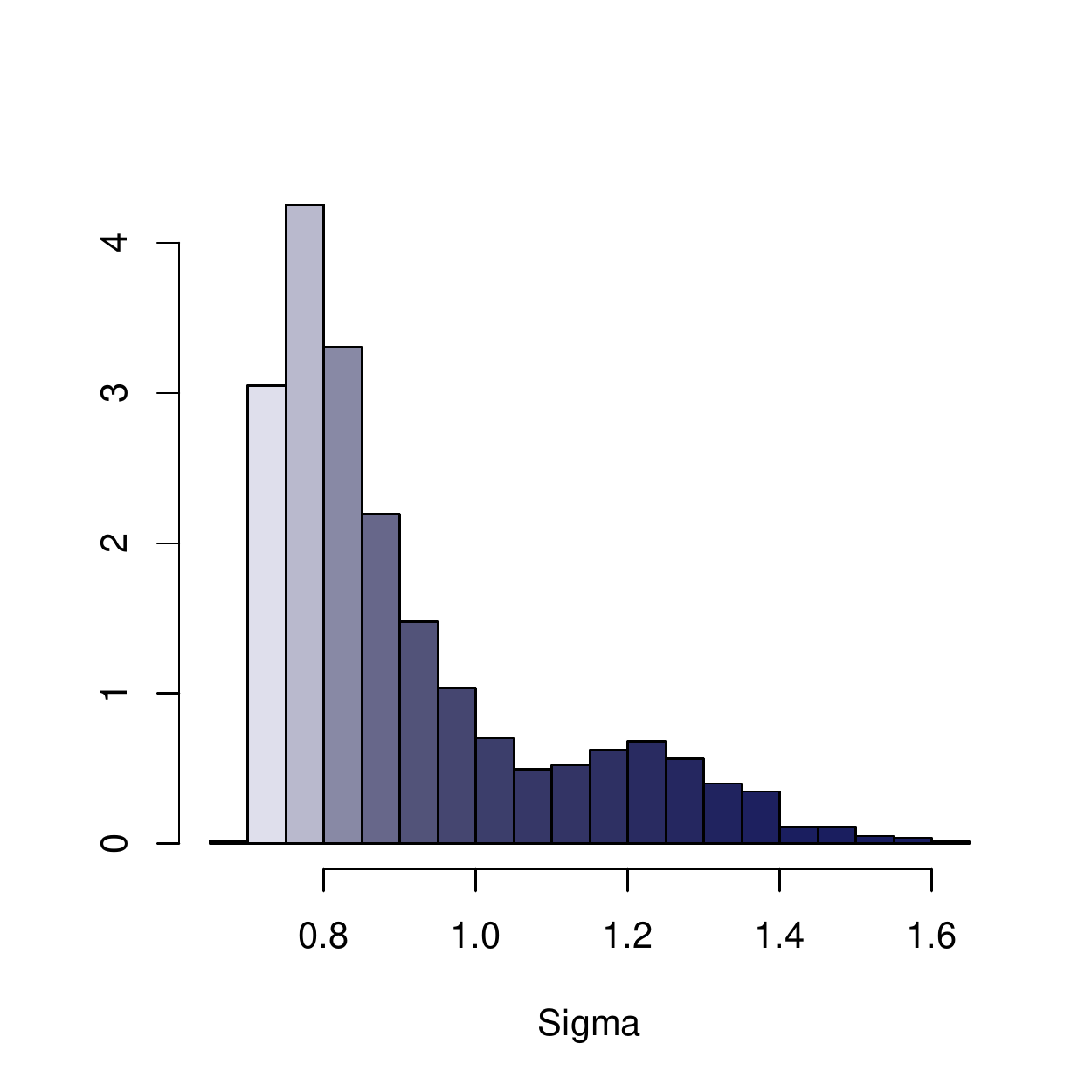}
  \caption{Population density of the U.S. counties (excluding Alaska and Hawaii) in 2010 (above). The darker the color, the higher the population density in the respective area. The sample standard deviation of the estimated residuals within a radius of 500 $km$ (310.686 $mi$) of the fitted spatial autoregressive process ($\hat\lambda = 0.8578$, $\hat{\sigma_\xi}^2 = 0.9104$) is shown below.}\label{fig:motivation}
\end{figure}

A further aspect that should be mentioned is that for spatial kriging, the underlying spatial process is usually assumed to be stationary and isotropic; i.e., the covariance between two observations depends only on the distance between these observations, not on the location of the observations (see, e.g., \citealt{Cressie93}). Thus, \cite{Sampson92}, \cite{Fuentes01}, \cite{Fuentes02}, and \cite{Schmidt03}, among others, have introduced various approaches to treat nonstationary spatial processes. For such processes, the spatial covariance matrix depends on both the location of each observation and the distances between all locations. Moreover, \cite{Ombao08}, and \cite{Stroud01} discussed nonstationary spatiotemporal models. Whereas the focus of these approaches is mostly to obtain accurate temporal forecasts or spatial interpolations, the so-called kriging, our paper aims to propose a spatial process with similar properties to the temporal ARCH process proposed by \cite{Engle82}, i.e., conditional heteroscedasticity. Hence, the entries of the spatial covariance matrix depend not only on the location and the distance between observations, as in nonstationary spatial processes, but also on the variance of locations nearby. In particular, we compare our spatial ARCH model and the temporal ARCH model with respect to important properties. Moreover, we illustrate the use of the spatial ARCH model as a residual process for spatial modeling of lung cancer mortality in the U.S. counties.

The remainder of the paper is structured as follows. In the ensuing section, we introduce the spatial ARCH model. Moreover, we derive important properties of the process. In Section \ref{sec:weighting_matrix}, two specifications of the spatial weighting matrix suitable for empirical research are discussed. Furthermore, we present some results regarding statistical inference, and we discuss an estimation procedure based on the maximum-likelihood principle. In an empirical study, we demonstrate how our results can be applied. Moreover, the results of various simulation studies are reported to yield better insight into the behavior of the spatial ARCH process. Finally, Section \ref{sec:conclusion} concludes the paper and provides some discussion of possible extensions and generalizations of the process.

\section{Spatial and Spatiotemporal Autoregressive Conditional Heteroscedasticity}\label{sec:models}

Assume that $\left\{Y(\xvec{s}) \in \xset{R}: \xvec{s} \in D_{\xvec{s}} \right\}$ is a univariate spatial stochastic process, where $D_{\xvec{s}}$ is a subset of the $q$-dimensional set of real numbers $\xset{R}^q$, the $q$-dimensional set of integers $\xset{Z}^q$, or the Cartesian product $\xset{R}^v \times \xset{Z}^l$ with $v + l = q$. Regarding the first case, a continuous process is present if a $q$-dimensional rectangle of positive volume in $D_{\xvec{s}}$ exists (cf. \citealt{Cressie11}). Considering the second case, the resulting process is a spatial lattice process. Moreover, spatiotemporal settings are covered regarding the $q$-dimensional set of integers and the product set $\xset{R}^v \times \xset{Z}^l$ because the temporal dimension can be considered as one dimension of the $q$-dimensional space. For instance, a spatiotemporal lattice process with two spatial dimensions would lie in the set of three-dimensional integers.

\subsection{Definition and Properties}\label{sec:spGARCH}

Let $\xvec{s}_1, \ldots, \xvec{s}_n$ denote all locations and $\xvec{Y}$ be the vector of observations $\left(Y\left(\xvec{s}_i\right)\right)_{i = 1, \ldots, n}$. The commonly applied spatial autoregressive model assumes that the conditional variance of $Y(\xvec{s}_i)$ depends only on the spatial weighting matrix (cf. \citealt{Cressie93,Cressie11}), not the observations at the neighboring locations. This approach is extended assuming that the conditional variance can vary over space, resulting in clusters of high and low variance.
Analogous to the ARCH time series model of \cite{Engle82}, the vector of observations is given by
\begin{equation}
\xvec{Y} = \text{diag}(\xvec{h})^{1/2} \xvec{\varepsilon} \, \label{eq:initial}
\end{equation}
where $\xvec{\varepsilon} = (\varepsilon(\xvec{s}_1), \ldots, \varepsilon(\xvec{s}_n))'$ is assumed to be an independent and identically distributed random error with $E(\xvec{\varepsilon}) = \xvec{0}$ and $Cov(\xvec{\varepsilon}) = \xmat{I}$. In addition, the identity matrix is denoted by $\xmat{I}$.
Furthermore, the vector $\xvec{h} = (h_i)_{i=1,\ldots,n}$ is specified as
\begin{equation}\label{eq:spARCH}
\xvec{h} = (h(\xvec{s}_i))_{i=1,\ldots,n} = \xvec{\alpha} + \xmat{W}  \, \text{diag}(\xvec{Y})\xvec{Y} \, ,
\end{equation}
where $\text{diag}(\xvec{a})$ denotes a diagonal matrix with the entries of $\xvec{a}$ on the diagonal.
Using the Hadamard product denoted by $\circ$, this equation can be rewritten such that
\begin{equation*}
\xvec{h} = \xvec{\alpha} + \xmat{W} (\xvec{Y} \circ \xvec{Y}) \, .
\end{equation*}
The $n \times n$ matrix $\xmat{W}$ consists of spatial weights. The elements of $\xmat{W}$ are assumed to be non-stochastic, nonnegative and zero on the main diagonal to prevent observations from influencing themselves. Moreover, each component of the vector $\xvec{\alpha} = (\alpha_i)_{i=1,\ldots,n}$ is assumed to be nonnegative.
Hence, the $i$-th entry of $\xvec{h}$ at location $\xvec{s}_i$ can be written as
\begin{equation*}
h(\xvec{s}_i) = \alpha_i + \sum_{v=1}^{n} w_{iv} Y(\xvec{s}_v)^2 \, ,
\end{equation*}
where $w_{iv}$ refers to $iv$-th entry of $\xmat{W}$ and $w_{ii}=0$ for $i=1,\ldots,n$. Thus,  $h(\xvec{s}_i)$ does not seem to depend on $Y(\xvec{s}_i)^2$. However, because $h(\xvec{s}_i)$ depends on $Y(\xvec{s}_j), j \neq i$  and these quantities depend on $Y(\xvec{s}_i)$ via $h(\xvec{s}_j)$, this is not the case. We shall discuss this point later in more detail.
Regarding this specification of $\xvec{h}$, we refer to the resulting process as the spatial ARCH model (spARCH). If $\xvec{\alpha} = \xvec{1}_n$, where $\xvec{1}_n$ is the $n$-dimensional vector of ones, and $\xmat{W} = \xmat{0}$, the resulting process coincides with the spatial white noise process.

The abovementioned spatiotemporal process could be modeled by defining the locations $\xvec{s} = (\xvec{s}_s, t)'$, where $\xvec{s}_s$ is the spatial location and $t \in \xset{Z}$ represents the point of time. For spatiotemporal settings, one must assume additionally that the weights of the locations $(\xvec{s}_s,t)$ and $(\xvec{s}_{\tilde{s}},\tilde{t})$ are zero if $\tilde{t} \geq t$. In the following Section \ref{sec:weighting_matrix}, we demonstrate how the weighting matrix must be defined for several temporal and spatiotemporal settings that have been proposed in the literature. To express the model in a more convenient manner, the time point $t$ can also be written as an index. For that reason, the number of included temporal lags is denoted by $p$, and the set of all spatial locations is $\{\xvec{s}_1, \ldots, \xvec{s}_n\}$. Thus, the process can be specified as
\begin{eqnarray*}
Y_t(\xvec{s}_i) & = & \sqrt{h_t(\xvec{s}_i)} \, \varepsilon_t(\xvec{s}_i) \qquad \text{and} \\
h_t(\xvec{s}_i) & = & \alpha_{i} + \sum_{v = 1}^{n}\sum_{\tau = 0}^{p} w_{\tau,iv} Y_{t-\tau}(\xvec{s}_v)^2 \, .
\end{eqnarray*}
It is worth noting that the spatial weighting parameters $w_{\tau,iv}$ might depend on the temporal lag. For $\tau = 0$, $w_{\tau,iv}$ describes the instantaneous spatial effect. Furthermore, one can rewrite the equation in matrix notation; that is,
\begin{eqnarray*}
\xvec{Y}_t & = & \text{diag}(\xvec{h}_t)^{1/2} \xvec{\varepsilon}_t \, , \\
\xvec{h}_t & = & \xvec{\alpha} + \sum_{\tau = 0}^{p} \xmat{W}_\tau  \, \text{diag}(\xvec{Y}_{t-\tau})\xvec{Y}_{t-\tau} \, .
\end{eqnarray*}
In the following paragraphs, we omit the index $t$.

The weighting matrix $\xmat{W}$ may depend on additional parameters. Possible choices for $\xmat{W}$ include, e.g.,
\begin{equation*}
\xmat{W} = \rho \tilde{\xmat{W}}\, , \;
\xmat{W} = \text{diag}(\rho_1,\ldots,\rho_n) \tilde{\xmat{W}}\, , \;
\xmat{W} = \text{diag}(\rho_1,\ldots,\rho_1,\ldots,\rho_r,\ldots,\rho_r) \tilde{\xmat{W}} \, ,
\end{equation*}
with a known weighting matrix $\tilde{\xmat{W}}$ or
\begin{equation*}
\xmat{W} = \rho \; \left(\lambda^{||\xvec{s}_i - \xvec{s}_j||} \right)_{i,j=1,\ldots,n} \, , \;
\xmat{W} = ( K(||\xvec{s}_i - \xvec{s}_j||; \xvec{\theta}) )_{i,j=1,\ldots,n}
\end{equation*}
with a decreasing function $K: [0,\infty) \rightarrow [0,\infty)$. Here, $||.||$ stands for the vector norm.
In Section \ref{sec:weighting_matrix}, we discuss some special weighting matrices in more detail.

Next, we focus on the conditions on the parameters such that the process is well defined. Initially, it is analyzed whether $\xvec{Y}$ is uniquely determined by $\xvec{\varepsilon}$.
Let
\begin{equation*}\label{eq:eta}
 \xvec{\eta} =  \left(
\begin{array}{c}
{\scriptscriptstyle \alpha_1 \varepsilon(\xvec{s}_1)^2 + \varepsilon(\xvec{s}_1)^2 \sum\limits_{v=1}^{n} w_{1v} \varepsilon(\xvec{s}_v)^2 \alpha_v} \\
{\scriptscriptstyle \alpha_2 \varepsilon(\xvec{s}_2)^2 + \varepsilon(\xvec{s}_2)^2 \sum\limits_{v=1}^{n} w_{2v} \varepsilon(\xvec{s}_v)^2 \alpha_v} \\
\vdots \\
{\scriptscriptstyle \alpha_n \varepsilon(\xvec{s}_n)^2 + \varepsilon(\xvec{s}_n)^2 \sum\limits_{v=1}^{n} w_{nv} \varepsilon(\xvec{s}_v)^2 \alpha_v} \\
\end{array}
\right)
\end{equation*}
and
\begin{equation*}\label{eq:A}
\xmat{A} = \text{diag}\left(\varepsilon(\xvec{s}_1)^2, \ldots, \varepsilon(\xvec{s}_n)^2\right) \xmat{W} \, , \quad \xvec{Y}^{(2)} = (Y(\xvec{s}_1)^2,\ldots,Y(\xvec{s}_n)^2)^\prime .
\end{equation*}

\begin{theorem}\label{th:f_eps}
Suppose that
\begin{equation}\label{eq:det}
\det\left( \xmat{I} - \xmat{A}^2 \right) \neq 0 \, .
\end{equation}
Then, there is one and only one $Y(\xvec{s}_1), \ldots, Y(\xvec{s}_n)$ that corresponds to each $\varepsilon(\xvec{s}_1), \ldots, \varepsilon(\xvec{s}_n)$. It holds that
\begin{equation}\label{eq:Y2}
\xvec{Y}^{(2)} = \left(\xmat{I} - \xmat{A}^2 \right)^{-1} \xvec{\eta} \; , \quad \xvec{h} = \xvec{\alpha} + \xmat{W} \left(\xmat{I} - \xmat{A}^2 \right)^{-1} \xvec{\eta}  \; , \text{and} \quad \xvec{Y} = \mathrm{diag}(\xvec{h})^{1/2}\xvec{\varepsilon} \, .
\end{equation}
\end{theorem}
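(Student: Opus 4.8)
The plan is to linearize the defining fixed-point relation by passing to the vector of squares $\xvec{Y}^{(2)}$. First I would square each scalar equation: since $Y(\xvec{s}_i) = \sqrt{h(\xvec{s}_i)}\,\varepsilon(\xvec{s}_i)$ with $\sqrt{h(\xvec{s}_i)}\geq 0$, one gets $Y(\xvec{s}_i)^2 = h(\xvec{s}_i)\,\varepsilon(\xvec{s}_i)^2$, and substituting $h(\xvec{s}_i) = \alpha_i + \sum_v w_{iv}Y(\xvec{s}_v)^2$ produces $Y(\xvec{s}_i)^2 = \varepsilon(\xvec{s}_i)^2\alpha_i + \varepsilon(\xvec{s}_i)^2\sum_v w_{iv}Y(\xvec{s}_v)^2$. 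Writing $\xmat{D} = \text{diag}(\varepsilon(\xvec{s}_1)^2,\ldots,\varepsilon(\xvec{s}_n)^2)$, so that $\xmat{A} = \xmat{D}\,\xmat{W}$, this is precisely the linear system $(\xmat{I}-\xmat{A})\,\xvec{Y}^{(2)} = \xmat{D}\,\xvec{\alpha}$. The decisive structural point is that the coefficient matrix $\xmat{A}$ depends on $\xvec{\varepsilon}$ alone and not on $\xvec{Y}$; hence once $\xvec{\varepsilon}$ is fixed, the squared observations are pinned down by an honest linear system.

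To match the form stated in \eqref{eq:Y2}, I would left-multiply by $(\xmat{I}+\xmat{A})$, which commutes with $(\xmat{I}-\xmat{A})$, obtaining $(\xmat{I}-\xmat{A}^2)\,\xvec{Y}^{(2)} = (\xmat{I}+\xmat{A})\,\xmat{D}\,\xvec{\alpha}$. A short index check identifies the right-hand side with $\xvec{\eta}$: the term $\xmat{D}\,\xvec{\alpha}$ contributes the entries $\alpha_i\varepsilon(\xvec{s}_i)^2$, while $\xmat{A}\,\xmat{D}\,\xvec{\alpha} = \xmat{D}\,\xmat{W}\,\xmat{D}\,\xvec{\alpha}$ contributes $\varepsilon(\xvec{s}_i)^2\sum_v w_{iv}\varepsilon(\xvec{s}_v)^2\alpha_v$, which together reproduce $\xvec{\eta}$ exactly. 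Under the hypothesis \eqref{eq:det} the matrix $\xmat{I}-\xmat{A}^2$ is invertible, so the system has the unique solution $\xvec{Y}^{(2)} = (\xmat{I}-\xmat{A}^2)^{-1}\xvec{\eta}$, the first identity in \eqref{eq:Y2}; substituting into $\xvec{h} = \xvec{\alpha} + \xmat{W}\xvec{Y}^{(2)}$ gives the second, and $\xvec{Y} = \text{diag}(\xvec{h})^{1/2}\xvec{\varepsilon}$ the third.

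For uniqueness I would use the factorization $\det(\xmat{I}-\xmat{A}^2) = \det(\xmat{I}-\xmat{A})\det(\xmat{I}+\xmat{A})$, so that \eqref{eq:det} forces $\det(\xmat{I}-\xmat{A})\neq 0$. If $\xvec{Y}$ and $\tilde{\xvec{Y}}$ both solve the model for the same $\xvec{\varepsilon}$, their squares satisfy the single system $(\xmat{I}-\xmat{A})\xvec{x} = \xmat{D}\,\xvec{\alpha}$ with identical coefficient matrix, so invertibility gives $\xvec{Y}^{(2)} = \tilde{\xvec{Y}}^{(2)}$, hence $\xvec{h} = \tilde{\xvec{h}}$, and then $Y(\xvec{s}_i) = \sqrt{h(\xvec{s}_i)}\,\varepsilon(\xvec{s}_i) = \tilde{Y}(\xvec{s}_i)$ for each $i$ (the sign of each coordinate being fixed by that of $\varepsilon(\xvec{s}_i)$). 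The step I expect to require the most care is not this algebra but guaranteeing that the recovered $\xvec{Y}$ is real: the square root $\text{diag}(\xvec{h})^{1/2}$ only makes sense once $\xvec{h}\geq\xvec{0}$ componentwise, which in turn needs $\xvec{Y}^{(2)}\geq\xvec{0}$. Here I would exploit that $\xmat{A} = \xmat{D}\,\xmat{W}$ is entrywise nonnegative, so that whenever the Neumann expansion $(\xmat{I}-\xmat{A})^{-1} = \sum_{k\geq 0}\xmat{A}^k$ converges, the solution $\xvec{Y}^{(2)} = (\xmat{I}-\xmat{A})^{-1}\xmat{D}\,\xvec{\alpha}$ inherits nonnegativity and hence $\xvec{h} = \xvec{\alpha}+\xmat{W}\xvec{Y}^{(2)}\geq\xvec{0}$. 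Deriving this nonnegativity from the bare determinant condition is the genuine obstacle, since \eqref{eq:det} by itself does not control the spectral radius of $\xmat{A}$ and thus does not force $\xvec{Y}^{(2)}\geq\xvec{0}$.
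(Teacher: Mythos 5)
Your proof is correct, and its core is the same linearization the paper uses: square the model equations so that $\xvec{Y}^{(2)}$ satisfies a linear system whose coefficient matrix depends on $\xvec{\varepsilon}$ alone, then invert. The difference lies in how the system $(\xmat{I}-\xmat{A}^2)\,\xvec{Y}^{(2)}=\xvec{\eta}$ is reached. The paper substitutes the model equation into itself a second time, writing $Y(\xvec{s}_v)^2=h(\xvec{s}_v)\varepsilon(\xvec{s}_v)^2$ inside the sum, and so arrives at the second-order system directly, with $\xvec{\eta}$ appearing as the accumulated inhomogeneous term. You instead stop at the first-order system $(\xmat{I}-\xmat{A})\,\xvec{Y}^{(2)}=\xmat{D}\,\xvec{\alpha}$, with $\xmat{D}=\text{diag}(\varepsilon(\xvec{s}_1)^2,\ldots,\varepsilon(\xvec{s}_n)^2)$, and then multiply by $\xmat{I}+\xmat{A}$, checking by an index computation that $(\xmat{I}+\xmat{A})\,\xmat{D}\,\xvec{\alpha}=\xvec{\eta}$; both computations are consistent and yield the same equation. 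Your route makes explicit two points the paper leaves implicit: first, via $\det(\xmat{I}-\xmat{A}^2)=\det(\xmat{I}-\xmat{A})\det(\xmat{I}+\xmat{A})$, hypothesis \eqref{eq:det} forces $\det(\xmat{I}-\xmat{A})\neq 0$, so uniqueness can be run on the primary first-order system --- and indeed the weaker condition $\det(\xmat{I}-\xmat{A})\neq 0$ would already suffice for existence and uniqueness, the full hypothesis being needed only to express the solution in the form \eqref{eq:Y2}; second, the recovery of the signs of the $Y(\xvec{s}_i)$ from those of the $\varepsilon(\xvec{s}_i)$, which the paper subsumes in the final step $\xvec{Y}=\text{diag}(\xvec{h})^{1/2}\xvec{\varepsilon}$ without comment. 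Finally, your closing concern --- that \eqref{eq:det} alone does not force $\xvec{Y}^{(2)}\geq\xvec{0}$ and hence does not guarantee a real-valued $\xvec{Y}$ --- is not a gap in your argument but a limitation of the theorem itself: the paper acknowledges exactly this immediately after the statement and resolves it separately in Theorem \ref{th:spARCH} and Lemmas \ref{lemma:triangular} and \ref{lemma:nontriangular}, so it is rightly outside the scope of this proof.
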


Because of the complex dependence structure, i.e., $Y(\xvec{s}_i)$ depends on $Y(\xvec{s}_j)$ for all $i,j = 1, \ldots, n$ and vice versa, it turns out that the components of $\xvec{Y}^{(2)}$ are not necessarily nonnegative; thus, the square root of $h(\xvec{s}_i)$ might not exist. The choice of the weighting matrix $\xmat{W}$ affects whether all elements of the squared observations $\xvec{Y}^{(2)}$ are greater than or equal to zero. Moreover, this condition also depends on the realizations of the error vector $\xvec{\varepsilon}$. Therefore, we further analyze the required condition such that the components of $\xvec{Y}^{(2)}$ are nonnegative.

\begin{theorem}\label{th:spARCH}
Suppose that $\xvec{\alpha} \geq \xvec{0}$, $w_{ij} \geq 0$ for all $i,j=1,\ldots,n$, $w_{ii} = 0$ for all $i=1,\ldots,n$ and that $\det(\xmat{I} - \xmat{A}^2) \neq 0$. If all elements of the matrix $(\xmat{I} - \xmat{A}^2)^{-1}$ are nonnegative, then all components of $\xvec{Y}^{(2)}$ are nonnegative; i.e., $Y(\xvec{s}_i)^2 \ge 0$ for $i=1,\ldots,n$. Moreover, $h(\xvec{s}_i) \ge 0$ for $i=1,\ldots,n$.
\end{theorem}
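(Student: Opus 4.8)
The plan is to reduce the statement to the elementary fact that the product of an entrywise-nonnegative matrix with an entrywise-nonnegative vector is again nonnegative, and to feed this fact with the explicit representations already established in Theorem~\ref{th:f_eps}. Concretely, I would proceed in three steps: first verify that $\xvec{\eta} \geq \xvec{0}$ componentwise; second, combine this with the hypothesis that $(\xmat{I} - \xmat{A}^2)^{-1}$ has nonnegative entries to conclude $\xvec{Y}^{(2)} \geq \xvec{0}$ via the identity $\xvec{Y}^{(2)} = (\xmat{I} - \xmat{A}^2)^{-1}\xvec{\eta}$ from \eqref{eq:Y2}; and third, deduce $\xvec{h} \geq \xvec{0}$ from the defining relation $\xvec{h} = \xvec{\alpha} + \xmat{W}\xvec{Y}^{(2)}$ in \eqref{eq:spARCH}.

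For the first step, I would simply read off the $i$-th component of $\xvec{\eta}$, namely $\alpha_i \varepsilon(\xvec{s}_i)^2 + \varepsilon(\xvec{s}_i)^2 \sum_{v=1}^{n} w_{iv}\varepsilon(\xvec{s}_v)^2 \alpha_v$, and observe that it is built exclusively from the nonnegative quantities $\alpha_i \geq 0$, the squares $\varepsilon(\xvec{s}_i)^2 \geq 0$, and the weights $w_{iv} \geq 0$. Since sums and products of nonnegative reals are nonnegative, every entry of $\xvec{\eta}$ is nonnegative, so $\xvec{\eta} \geq \xvec{0}$. The second step is then immediate: writing the $i$-th component of $\xvec{Y}^{(2)}$ as $\sum_{j}[(\xmat{I} - \xmat{A}^2)^{-1}]_{ij}\,\eta_j$, each summand is a product of a nonnegative matrix entry (by hypothesis) and a nonnegative entry of $\xvec{\eta}$, whence $\xvec{Y}^{(2)} \geq \xvec{0}$ and in particular $Y(\xvec{s}_i)^2 \geq 0$ for every $i$.

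For the third step I would invoke $\xvec{h} = \xvec{\alpha} + \xmat{W}(\xvec{Y}\circ\xvec{Y}) = \xvec{\alpha} + \xmat{W}\xvec{Y}^{(2)}$. Because $\xvec{\alpha} \geq \xvec{0}$, the matrix $\xmat{W}$ has nonnegative entries, and $\xvec{Y}^{(2)} \geq \xvec{0}$ was just established, the term $\xmat{W}\xvec{Y}^{(2)}$ is nonnegative, and adding the nonnegative vector $\xvec{\alpha}$ preserves this; hence $h(\xvec{s}_i) \geq 0$ for all $i$, which also guarantees that the square roots appearing in \eqref{eq:initial} are real.

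I do not expect a genuine obstacle in the proof itself, since the analytic content has been absorbed into the hypothesis that $(\xmat{I} - \xmat{A}^2)^{-1}$ is entrywise nonnegative. The delicate point is rather conceptual: this hypothesis is not automatic and must be checked in applications. A natural sufficient condition is that the spectral radius of $\xmat{A}^2$ be smaller than one; then the Neumann series $(\xmat{I} - \xmat{A}^2)^{-1} = \sum_{k=0}^{\infty}(\xmat{A}^2)^k$ converges, and since $\xmat{A} = \text{diag}(\varepsilon(\xvec{s}_1)^2,\ldots,\varepsilon(\xvec{s}_n)^2)\xmat{W}$ is itself entrywise nonnegative, every power $(\xmat{A}^2)^k$ and therefore the whole sum is nonnegative. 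I would flag this route as the place where care is needed, while the nonnegativity claims of the theorem proper follow by the bookkeeping above.
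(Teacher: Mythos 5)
Your proof is correct and follows exactly the paper's own argument: the paper's proof simply notes that all elements of $\xvec{\eta}$ are nonnegative, which together with the entrywise nonnegativity of $(\xmat{I} - \xmat{A}^2)^{-1}$ and the representations in \eqref{eq:Y2} and \eqref{eq:spARCH} gives the result, precisely the three steps you spell out. Your closing remark about the Neumann series is also sound, and in fact corresponds to the paper's Lemma \ref{lemma:nontriangular}.
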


In general, it seems to be difficult to check whether the condition given in Theorem \ref{th:spARCH} is fulfilled because it depends on both the weighting matrix $\xmat{W}$ and the error vector $\xvec{\varepsilon}$. However, in the important case in which $\xmat{W}$ is an upper or lower triangular matrix, the condition is always satisfied.

\begin{lemma}\label{lemma:triangular}
Suppose that $\xvec{\alpha} \geq \xvec{0}$, $w_{ij} \geq 0$ for all $i,j = 1,\ldots,n$ and $w_{ij} = 0$ for $1\le i \le j \le n$. All elements of the matrix $(\xmat{I} - \xmat{A}^2)^{-1}$ are then nonnegative.
\end{lemma}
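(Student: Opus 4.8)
The plan is to exploit the triangular structure of $\xmat{W}$ to show that $\xmat{A}^2$ is nilpotent, so that the Neumann series for $(\xmat{I}-\xmat{A}^2)^{-1}$ terminates after finitely many terms and consists entirely of entrywise-nonnegative matrices.

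First I would record the structure of $\xmat{A}$. Since $\xmat{A} = \text{diag}(\varepsilon(\xvec{s}_1)^2,\ldots,\varepsilon(\xvec{s}_n)^2)\,\xmat{W}$ is the product of a diagonal matrix with nonnegative diagonal and the strictly lower triangular, entrywise nonnegative matrix $\xmat{W}$, the matrix $\xmat{A}$ is itself strictly lower triangular with nonnegative entries: left multiplication by a diagonal matrix rescales rows and hence preserves both the zero pattern on and above the main diagonal and the sign of every entry. Consequently $\xmat{A}^2$ is again strictly lower triangular — a nonzero entry $(\xmat{A}^2)_{ij}=\sum_k A_{ik}A_{kj}$ requires $i>k>j$, which forces $i>j$ — and all its entries are nonnegative, being sums of products of nonnegative numbers.

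Next I would invoke nilpotency. Any strictly lower triangular $n\times n$ matrix $\xmat{N}$ satisfies $\xmat{N}^{n}=\xmat{0}$, so in particular $(\xmat{A}^2)^{n}=\xmat{0}$. This already shows that $\xmat{I}-\xmat{A}^2$ is lower triangular with ones on the diagonal, whence $\det(\xmat{I}-\xmat{A}^2)=1\neq 0$, consistent with the standing determinant assumption. Nilpotency further forces the Neumann series to terminate:
\begin{equation*}
(\xmat{I}-\xmat{A}^2)^{-1} = \sum_{k=0}^{n-1}(\xmat{A}^2)^k \, ,
\end{equation*}
which one verifies directly from $(\xmat{I}-\xmat{A}^2)\sum_{k=0}^{n-1}(\xmat{A}^2)^k = \xmat{I}-(\xmat{A}^2)^n = \xmat{I}$.

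Finally I would conclude nonnegativity. Each power $(\xmat{A}^2)^k$ is a product of the entrywise-nonnegative matrix $\xmat{A}^2$ with itself and therefore has nonnegative entries, and $\xmat{I}$ is nonnegative as well; the finite sum above is thus an entrywise sum of nonnegative matrices, so every element of $(\xmat{I}-\xmat{A}^2)^{-1}$ is nonnegative, as claimed. I do not expect a genuine obstacle here: the only points needing care are the stability of the strict-lower-triangular zero pattern under multiplication and the fact that products and sums of nonnegative reals stay nonnegative, both of which are elementary.
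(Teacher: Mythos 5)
Your proof is correct and follows essentially the same route as the paper's: both exploit the strict lower triangularity of $\xmat{A}$ (inherited from $\xmat{W}$ via the diagonal rescaling) to get nilpotency, express $(\xmat{I}-\xmat{A}^2)^{-1}$ as a terminating Neumann series, and conclude nonnegativity term by term. Your write-up is somewhat more explicit than the paper's (you verify the triangular zero pattern of $\xmat{A}^2$ and the series identity directly, and note $\det(\xmat{I}-\xmat{A}^2)=1$), but there is no substantive difference in the argument.
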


Triangular matrices of spatial weights are of high practical relevance because the resulting spatial process can be observed as an oriented process. This means that the process evolves in a certain direction. In the case of a lower triangular matrix $\xmat{W}$, the location $\xvec{s}_1$ is regarded as the origin of the process. For the case of an arbitrary weighting matrix $\xmat{W}$, we need a criterion that can be more easily checked than that of Theorem \ref{th:spARCH}. Another possibility is given in the next lemma.

\begin{lemma}\label{lemma:nontriangular}
Suppose that $\xvec{\alpha} \geq \xvec{0}$, $w_{ij} \geq 0$ for all $i,j = 1,\ldots,n$ and $w_{ij} = 0$ for $i = j$. If
\begin{equation}\label{eq:assump}
\lim_{k \rightarrow \infty} \xmat{A}^{2k} = 0 \, ,
\end{equation}
then all elements of the matrix $(\xmat{I} - \xmat{A}^2)^{-1}$ are nonnegative.
\end{lemma}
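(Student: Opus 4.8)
The plan is to recognize this as a statement about the Neumann series of the entrywise nonnegative matrix $\xmat{A}^2$. First I would check that $\xmat{A}^2$ is indeed nonnegative: by construction $\xmat{A} = \text{diag}(\varepsilon(\xvec{s}_1)^2, \ldots, \varepsilon(\xvec{s}_n)^2)\,\xmat{W}$, and since each $\varepsilon(\xvec{s}_i)^2 \geq 0$ and every $w_{ij} \geq 0$, the matrix $\xmat{A}$ has only nonnegative entries. A product of two entrywise nonnegative matrices is again entrywise nonnegative, so $\xmat{A}^2 \geq \xmat{0}$, and by induction every power $(\xmat{A}^2)^k$ is entrywise nonnegative as well.

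The central step is to interpret the hypothesis $\lim_{k \to \infty} \xmat{A}^{2k} = \xmat{0}$, that is $(\xmat{A}^2)^k \to \xmat{0}$. By the standard characterization of convergent matrices, $(\xmat{A}^2)^k \to \xmat{0}$ holds if and only if the spectral radius satisfies $\rho(\xmat{A}^2) < 1$. Under this condition $\xmat{I} - \xmat{A}^2$ is invertible, so that the inverse appearing in the statement is well defined, and it admits the Neumann expansion
\begin{equation*}
\left(\xmat{I} - \xmat{A}^2\right)^{-1} = \sum_{k=0}^{\infty} \left(\xmat{A}^2\right)^{k} \, .
\end{equation*}

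To conclude, I would pass nonnegativity through the series. Each partial sum $\sum_{k=0}^{K}(\xmat{A}^2)^k$ is a finite sum of entrywise nonnegative matrices and is therefore entrywise nonnegative; since the Neumann series converges entrywise and the nonnegative orthant is closed, the limit $(\xmat{I} - \xmat{A}^2)^{-1}$ inherits nonnegativity in every entry. This is precisely the claim. (Note that the hypothesis $\xvec{\alpha} \geq \xvec{0}$ plays no role in this lemma itself; it enters only later, when Theorem \ref{th:spARCH} converts the nonnegativity of the inverse into the nonnegativity of $\xvec{Y}^{(2)}$.)

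I expect the only genuine subtlety to be the equivalence between $(\xmat{A}^2)^k \to \xmat{0}$ and $\rho(\xmat{A}^2) < 1$, together with the resulting convergence of the Neumann series; both are standard linear-algebra facts, so the argument is short once they are invoked. It is worth remarking that this lemma generalizes Lemma \ref{lemma:triangular}: when $\xmat{W}$ is strictly triangular, $\xmat{A}$ is nilpotent, so $\xmat{A}^{2k} = \xmat{0}$ for all sufficiently large $k$, the limit condition \eqref{eq:assump} holds trivially, and the Neumann series terminates as a finite sum of nonnegative matrices.
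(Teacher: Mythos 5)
Your proposal is correct and follows essentially the same route as the paper: the paper also invokes the hypothesis $\lim_{k\to\infty}\xmat{A}^{2k}=\xmat{0}$ (via Theorem 18.2.16 of Harville) to obtain invertibility of $\xmat{I}-\xmat{A}^2$ together with the Neumann expansion $(\xmat{I}-\xmat{A}^2)^{-1}=\sum_{v=0}^{\infty}(\xmat{A}^2)^v$, and then concludes nonnegativity from the entrywise nonnegativity of $\xmat{A}$. Your additional remarks (the spectral-radius characterization, closedness of the nonnegative orthant, and the observation that $\xvec{\alpha}\geq\xvec{0}$ is not used) merely make explicit what the paper leaves to the cited reference.
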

It is worth noting that if $||\cdot||$ denotes some induced matrix norm, then \eqref{eq:assump} is fulfilled if $|| \xmat{A}^2 || < 1$ (cf. Theorem 18.2.19 of \cite{Harville97}).

To take a closer look at the condition in the above Lemma \ref{lemma:nontriangular}, we consider two simple examples.

\begin{example}\label{bsp:n2}
Initially, the simple spARCH process for $n=2$ is considered in more detail, which means that the process has exactly two observations $Y(\xvec{s}_1)$ and $Y(\xvec{s}_2)$ at the two locations $\xvec{s}_1$ and $\xvec{s}_2$. Simple calculations show that
\begin{equation*}
Y(\xvec{s}_i)^2 = \left\{
\begin{array}{ccc}
\varepsilon(\xvec{s}_1)^2 \frac{\alpha_1 + \alpha_2 w_{12} \varepsilon(\xvec{s}_2)^2}{1 - w_{12}w_{21} \varepsilon(\xvec{s}_1)^2 \varepsilon(\xvec{s}_2)^2} & \text{for} & i = 1 \\
\varepsilon(\xvec{s}_2)^2 \frac{\alpha_2 + \alpha_1 w_{21} \varepsilon(\xvec{s}_1)^2}{1 - w_{12}w_{21} \varepsilon(\xvec{s}_1)^2 \varepsilon(\xvec{s}_2)^2} & \text{for} & i = 2
\end{array} \right. \, .
\end{equation*}
These quantities are nonnegative if and only if $\xvec{\alpha} \geq \xvec{0}$, $w_{12} \ge 0$, $w_{21} \ge 0$ and
\begin{equation}\label{eq:assump2}
\varepsilon(\xvec{s}_1)^2 \varepsilon(\xvec{s}_2)^2 < \frac{1}{w_{12} w_{21}} \, .
\end{equation}
Consequently, $h(\xvec{s}_2) = \alpha_2 + w_{21} Y(\xvec{s}_1)^2 \geq 0$, and by analogy, $h(\xvec{s}_1) \geq 0$. Thus, all quantities are well defined.
Choosing in Lemma \ref{lemma:nontriangular} the norm $|| \xmat{B} ||_1 = \max\limits_{j} \sum_{i} |b_{ij}|$, it can be observed that the aforementioned condition is equivalent to condition \eqref{eq:assump2}.
\end{example}

This result shows that the support of $\varepsilon(\xvec{s}_i)^2$ must be bounded; otherwise, there arise problems with the interpretation of the model quantities. Hence, condition \eqref{eq:assump} must also be understood in this manner. This means that the support of the error quantities must be bounded in a certain manner. In the general case, the condition on the induced norm is more difficult to check. Therefore, we consider in the next example that the support of the error term is compact.

\begin{example}\label{bsp:finite_support}
Suppose that $\varepsilon(\xvec{s}_i)$ is taking values on a finite support. Let
$| \varepsilon(\xvec{s}_i) | \le a$ for all $i=1,\ldots,n$. Moreover, we will utilize the norm $|| \xmat{B} ||_1 = \max\limits_{j} \sum_{i} |b_{ij}|$. Now,
\begin{equation*}
\xmat{A}^2= \left( \varepsilon(\xvec{s}_i)^2 \sum_{v=1}^n w_{iv} w_{vj} \varepsilon(\xvec{s}_v)^2 \right)_{i,j=1,\ldots,n}
\end{equation*}
and
\begin{equation*}
|| \xmat{A}^2 ||_1 = \max_{1 \le j \le n} \sum_{i=1}^n \varepsilon(\xvec{s}_i)^2 \sum_{v=1}^n w_{iv} w_{vj} \varepsilon(\xvec{s}_v)^2 \le a^4
\max_{1 \le j \le n} \sum_{i=1}^n \sum_{v=1}^n w_{iv} w_{vj}  = a^4 ||\xmat{W}^2||_1 \, .
\end{equation*}
Thus, the norm is less than 1 if
\begin{equation*}
a \; < \; \frac{1}{\sqrt[4]{||\xmat{W}^2||_1}} \, .
\end{equation*}
It is worth noting that for $n=2$, we obtain the bound of Example \ref{bsp:n2}.
\end{example}

We observe a tradeoff between the weighting coefficients and the parameter $a$. To be precise, if the weighting coefficients increase, one would expect that the spatial autocorrelation of the squared observations would increase by the same magnitude. However, increasing values of the elements in $\xmat{W}$ imply smaller values of $a$, which reduces the extent of the spatial autocorrelation. We focus on this issue in more detail in Section \ref{sec:MC}. Below, the probability structure of $\xvec{Y}$ is derived.

Suppose that the assumptions of Theorem \ref{th:spARCH} are satisfied, with $\xvec{\alpha} > 0$, and that $\xvec{\varepsilon}$ is continuous with density function $f_{\xvec{\varepsilon}}$. Let $h_i = \alpha_i + \sum_{v=1, v \neq i}^n w_{iv} y_v^2$. Applying the transformation rule for random vectors (e.g., \citealt{Bickel15}), we obtain a density of
$\xvec{Y} = \text{diag}(\xvec{h})^{1/2} \xvec{\varepsilon} = f(\xvec{\varepsilon})$. Note that the transformation is one-to-one because if $\xvec{Y} = \text{diag}(\xvec{h})^{1/2} \xvec{\varepsilon} = f(\xvec{\varepsilon}) = \tilde{\xvec{Y}} = \text{diag}(\tilde{\xvec{h}})^{1/2} \tilde{\xvec{\varepsilon}} = f(\tilde{\xvec{\varepsilon}})$, it follows that $\xvec{h} = \xvec{h}(\xvec{Y}) = \xvec{h}(\tilde{\xvec{Y}}) = \tilde{\xvec{h}}$ and thus $\xvec{\varepsilon} = \tilde{\xvec{\varepsilon}}$ because $h(\xvec{s}_i) > 0$. We obtain that
\begin{eqnarray}
f_{\xvec{Y}}(\xvec{y}) & =  & f_{(Y(\xvec{s}_1), \ldots, Y(\xvec{s}_n))}(y_1, \ldots, y_n) \nonumber \\
& = & f_{(\varepsilon(\xvec{s}_1), \ldots, \varepsilon(\xvec{s}_n))}\left(\frac{y_1}{\sqrt{h}_1}, \ldots, \frac{y_n}{\sqrt{h}_n}\right) | \det\left( \left( \frac{\partial y_j/\sqrt{h_j}}{\partial y_i} \right)_{i,j=1,\ldots,n}\right) | \, . \label{eq:transformation}
\end{eqnarray}
Because
\begin{equation*}
\frac{\partial y_j/\sqrt{h_j}}{\partial y_i} = \left\{
\begin{array}{ccc}
1\, /\, \sqrt{h}_j & \mbox{for} & i=j \\
- \frac{y_i y_j}{h_j^{3/2}} w_{ji} & \mbox{for} & i \neq j
\end{array} \right. \, ,
\end{equation*}
it follows that
\begin{equation*}
| \det\left( \left( \frac{\partial y_j/\sqrt{h}_j}{\partial y_i} \right)_{i,j=1,\ldots,n}\right) | = \prod_{i=1}^n \frac{y_i^2}{h_i^{3/2}} \; \cdot \;  | \det\left( \text{diag}\left(\frac{h_1}{y_1^2},\ldots, \frac{h_n}{y_n^2}\right) + \xmat{W}^\prime \right) | \, .
\end{equation*}
The determinant of the sum of a diagonal matrix and an arbitrary matrix can be calculated as described in Theorem 13.7.3 of \cite{Harville97}.

In the special case of Example \ref{bsp:n2} ($n=2$), we obtain that $f_{(Y(\xvec{s}_1), Y(\xvec{s}_2))}(y_1,y_2)$
\begin{equation}\label{eq:sarch2}
= \frac{ \alpha_1 \alpha_2 + \alpha_1 w_{21} y_1^2 + \alpha_2 w_{12} y_2^2 }{(\alpha_1 + w_{12} y_2^2)^{3/2} (\alpha_2 + w_{21} y_1^2)^{3/2}} f_{(\varepsilon(\xvec{s}_1), \varepsilon(\xvec{s}_2))}\left( \frac{y_1}{\sqrt{\alpha_1+w_{12} y_2^2}}, \frac{y_2}{\sqrt{\alpha_2+w_{21} y_1^2}} \right) \, .
\end{equation}

Our next aim is to develop statements about the moments of $Y(\xvec{s}_i)$. To accomplish this, we shall assume that the error quantities are symmetric. There are various possibilities of defining symmetry for multivariate distributions (cf. \citealt{Serfling06}). Here, we consider sign-symmetric multivariate distributions.

\begin{theorem}\label{th:distr_sym}
Suppose that the assumptions of Theorem \ref{th:spARCH} are satisfied and that the distribution of $\xvec{\varepsilon}$ is sign-symmetric; i.e.,
\begin{equation*}\label{eq:gleich}
\xvec{\varepsilon} \stackrel{d}{=} \left( (-1)^{v_1} \varepsilon(\xvec{s}_1), \ldots, (-1)^{v_n} \varepsilon(\xvec{s}_n)\right)
\quad \text{for all} \quad v_1,\ldots,v_n \in \{0,1\} \, .
\end{equation*}
It then holds that the distribution of $\xvec{Y}$ is sign-symmetric as well.
\end{theorem}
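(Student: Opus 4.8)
The plan is to show that the map $\xvec{\varepsilon} \mapsto \xvec{Y}$ is \emph{equivariant} under coordinatewise sign changes and then to transport the assumed sign-symmetry of $\xvec{\varepsilon}$ through this map. To set up notation I would fix $v_1, \ldots, v_n \in \{0,1\}$ and write $\xmat{D} = \text{diag}((-1)^{v_1}, \ldots, (-1)^{v_n})$, so that sign-symmetry of the errors reads $\xvec{\varepsilon} \stackrel{d}{=} \xmat{D}\xvec{\varepsilon}$ and the goal becomes $\xvec{Y} \stackrel{d}{=} \xmat{D}\xvec{Y}$. Under the assumptions of Theorem~\ref{th:spARCH}, Theorem~\ref{th:f_eps} lets me treat $\xvec{Y}$ as a single-valued (measurable) function of $\xvec{\varepsilon}$, say $\xvec{Y} = g(\xvec{\varepsilon})$.

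The key step is the observation that $\xvec{h}$ depends on $\xvec{\varepsilon}$ only through the squared components $\varepsilon(\xvec{s}_1)^2, \ldots, \varepsilon(\xvec{s}_n)^2$. This is immediate from the explicit formulas preceding Theorem~\ref{th:f_eps}: both $\xmat{A} = \text{diag}(\varepsilon(\xvec{s}_1)^2, \ldots, \varepsilon(\xvec{s}_n)^2)\xmat{W}$ and $\xvec{\eta}$ are built solely from the $\varepsilon(\xvec{s}_i)^2$ and the fixed parameters, so by \eqref{eq:Y2} the same holds for $\xvec{Y}^{(2)} = (\xmat{I} - \xmat{A}^2)^{-1}\xvec{\eta}$ and hence for $\xvec{h} = \xvec{\alpha} + \xmat{W}\xvec{Y}^{(2)}$. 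Since $(\xmat{D}\xvec{\varepsilon})_i^2 = \varepsilon(\xvec{s}_i)^2$ for every $i$, I conclude that $\xvec{h}$ is invariant under the sign flip, $\xvec{h}(\xmat{D}\xvec{\varepsilon}) = \xvec{h}(\xvec{\varepsilon})$.

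With this in hand, equivariance of $g$ drops out because diagonal matrices commute:
\begin{equation*}
g(\xmat{D}\xvec{\varepsilon}) = \text{diag}(\xvec{h}(\xmat{D}\xvec{\varepsilon}))^{1/2}\,\xmat{D}\xvec{\varepsilon} = \xmat{D}\,\text{diag}(\xvec{h}(\xvec{\varepsilon}))^{1/2}\,\xvec{\varepsilon} = \xmat{D}\,g(\xvec{\varepsilon}) \, .
\end{equation*}
Applying the measurable map $g$ to both sides of $\xvec{\varepsilon} \stackrel{d}{=} \xmat{D}\xvec{\varepsilon}$ then gives $\xvec{Y} = g(\xvec{\varepsilon}) \stackrel{d}{=} g(\xmat{D}\xvec{\varepsilon}) = \xmat{D}\,g(\xvec{\varepsilon}) = \xmat{D}\xvec{Y}$, which is the desired sign-symmetry of $\xvec{Y}$; since $v_1, \ldots, v_n$ were arbitrary, this finishes the argument.

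I expect the main thing requiring care to be the bookkeeping around the domain of $g$: its construction through Theorem~\ref{th:f_eps} presupposes $\det(\xmat{I} - \xmat{A}^2) \neq 0$ together with the nonnegativity condition of Theorem~\ref{th:spARCH}. The reassuring point is that both conditions depend on $\xvec{\varepsilon}$ only through the squared errors, so they cut out a set invariant under $\xmat{D}$; thus $\xvec{\varepsilon}$ and $\xmat{D}\xvec{\varepsilon}$ belong to the domain together and the equivariance identity holds wherever $g$ is defined, leaving no measure-theoretic gap.
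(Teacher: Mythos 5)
Your proposal is correct and follows essentially the same route as the paper's proof: both arguments rest on the observation that, via \eqref{eq:Y2}, the vector $\xvec{h}$ is a function of the squared errors $\varepsilon(\xvec{s}_1)^2,\ldots,\varepsilon(\xvec{s}_n)^2$ alone, so that flipping signs of $\xvec{\varepsilon}$ leaves $\xvec{h}$ unchanged and simply flips the corresponding signs of $\xvec{Y}$, after which the distributional identity is transported through this map. Your explicit treatment of the domain of $g$ being invariant under sign flips is a small refinement the paper leaves implicit, but it does not change the substance of the argument.
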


It is important to note that $(\varepsilon(\xvec{s}_1),\ldots,\varepsilon(\xvec{s}_n))$ is sign-symmetric if the random variables $\varepsilon(\xvec{s}_1),\ldots,\varepsilon(\xvec{s}_n)$ are independent and if $\varepsilon(\xvec{s}_i)$ is symmetric about zero for all $i=1,\ldots,n$.

Next, we want to discuss the conditions under which the moments of $\xvec{Y}^{(2)}$ exist.
Using symmetry, it is proved that all odd moments are zero if the error variable is symmetric.
First, it is assumed that the weighting matrix $\xmat{W}$ is a triangular matrix.

\begin{lemma}\label{lemma:moments1}
Suppose that the assumptions of Theorem \ref{th:spARCH} are satisfied.  Let $n \ge 3$, $r \in \xset{N}$ and suppose that $E(\varepsilon(\xvec{s}_i)^{8r[(n-1)/2]}) < \infty$ for all $i=1,\ldots,n$. Let $w_{ij} \ge 0$ for $i,j=1,\ldots,n$ and $w_{ij} = 0$ for $1 \le i \le j \le n$; then, it holds that
\begin{itemize}
\item[a)]  $E( Y(\xvec{s}_i)^{2r} ) < \infty$ for all $i=1,\ldots,n$.
\item[b)]  If $\xvec{\varepsilon}$ is additionally sign-symmetric, then $E( Y(\xvec{s}_i)^{2v-1} ) = 0$ and $E( Y(\xvec{s}_i)^{2v-1} | Y(\xvec{s}_j), j=1,\ldots,n, j \neq i) = 0$ for $v=1,\ldots,r, i=1,\ldots,n$.
\end{itemize}
\end{lemma}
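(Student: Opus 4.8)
The plan is to exploit the strictly lower-triangular structure $w_{ij}=0$ for $i\le j$, which turns the implicit system of Theorem~\ref{th:spARCH} into an explicit recursion. Writing $h(\xvec{s}_i)=\alpha_i+\sum_{v=1}^{i-1}w_{iv}Y(\xvec{s}_v)^2$, one sees by induction on $i$ that $Y(\xvec{s}_1),\ldots,Y(\xvec{s}_{i-1})$ --- and hence $h(\xvec{s}_i)$ --- are functions of $\varepsilon(\xvec{s}_1),\ldots,\varepsilon(\xvec{s}_{i-1})$ alone. Since the errors are independent, $h(\xvec{s}_i)$ is independent of $\varepsilon(\xvec{s}_i)$, and the defining relation $Y(\xvec{s}_i)^2=h(\xvec{s}_i)\,\varepsilon(\xvec{s}_i)^2$ factorizes every expectation at location $i$ across the cut between the ``past'' errors and $\varepsilon(\xvec{s}_i)$.

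For part (a) I would run an induction on $i$ controlling the $L^r$-norm of $Y(\xvec{s}_i)^2$. From the factorization and the independence just noted, $E(Y(\xvec{s}_i)^{2r})=E(h(\xvec{s}_i)^r)\,E(\varepsilon(\xvec{s}_i)^{2r})$, i.e.
\begin{equation*}
\|Y(\xvec{s}_i)^2\|_{L^r}=\|h(\xvec{s}_i)\|_{L^r}\,\big(E\,\varepsilon(\xvec{s}_i)^{2r}\big)^{1/r},
\end{equation*}
and Minkowski's inequality applied to $h(\xvec{s}_i)=\alpha_i+\sum_{v<i}w_{iv}Y(\xvec{s}_v)^2$ gives
\begin{equation*}
\|h(\xvec{s}_i)\|_{L^r}\le \alpha_i+\sum_{v=1}^{i-1}w_{iv}\,\|Y(\xvec{s}_v)^2\|_{L^r}.
\end{equation*}
Starting from $\|Y(\xvec{s}_1)^2\|_{L^r}=\alpha_1\big(E\,\varepsilon(\xvec{s}_1)^{2r}\big)^{1/r}<\infty$, this recursion propagates finiteness to every $i$, so $E(Y(\xvec{s}_i)^{2r})<\infty$. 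The argument needs only $E(\varepsilon(\xvec{s}_i)^{2r})<\infty$, so the stated hypothesis $E(\varepsilon(\xvec{s}_i)^{8r[(n-1)/2]})<\infty$ is comfortably sufficient; the larger exponent would naturally arise from a cruder, H\"older-based treatment of the cross-moments $E(\prod_{v<i}Y(\xvec{s}_v)^{2j_v})$, $\sum_v j_v\le r$, instead of the Minkowski recursion above. (Equivalently, unrolling the recursion exhibits $Y(\xvec{s}_i)^2$ as a multilinear polynomial in $\varepsilon(\xvec{s}_1)^2,\ldots,\varepsilon(\xvec{s}_i)^2$ with nonnegative coefficients, whence $Y(\xvec{s}_i)^{2r}$ is a polynomial in which each $\varepsilon(\xvec{s}_j)$ occurs to power at most $2r$, and independence factorizes $E(Y(\xvec{s}_i)^{2r})$ into marginal error moments of order at most $2r$.)

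For part (b) I would invoke Theorem~\ref{th:distr_sym}: since $\xvec{\varepsilon}$ is sign-symmetric, so is $\xvec{Y}$, and the sign pattern with a single $v_i=1$ yields the one-coordinate invariance $\xvec{Y}\stackrel{d}{=}(Y(\xvec{s}_1),\ldots,-Y(\xvec{s}_i),\ldots,Y(\xvec{s}_n))$. For any bounded measurable $g$ this gives $E[\,Y(\xvec{s}_i)^{2v-1}g(Y(\xvec{s}_j),j\neq i)\,]=-E[\,Y(\xvec{s}_i)^{2v-1}g(\cdots)\,]$, because $g$ depends only on the coordinates left fixed by the flip; hence the product expectation vanishes, which is precisely $E(Y(\xvec{s}_i)^{2v-1}\mid Y(\xvec{s}_j),j\neq i)=0$, and $g\equiv1$ yields $E(Y(\xvec{s}_i)^{2v-1})=0$. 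The integrability required to manipulate these odd moments comes from part (a), since $E|Y(\xvec{s}_i)|^{2v-1}\le (E\,Y(\xvec{s}_i)^{2r})^{(2v-1)/(2r)}<\infty$ for $v\le r$.

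The delicate point is the \emph{conditional} statement in (b). The naive route --- condition on $Y(\xvec{s}_j),j\neq i$, observe that $h(\xvec{s}_i)$ is then known and $Y(\xvec{s}_i)=\sqrt{h(\xvec{s}_i)}\,\varepsilon(\xvec{s}_i)$ --- fails, because the conditioning set contains $Y(\xvec{s}_j)$ with $j>i$, and these \emph{do} depend on $\varepsilon(\xvec{s}_i)$; thus $\varepsilon(\xvec{s}_i)$ is not independent of the conditioning $\sigma$-field and one cannot simply extract $E(\varepsilon(\xvec{s}_i)^{2v-1})=0$. The clean resolution is the distributional single-flip argument above. Structurally, in the triangular model flipping $\varepsilon(\xvec{s}_i)$ flips $Y(\xvec{s}_i)$ and leaves every other $Y(\xvec{s}_j)$ unchanged --- for $j<i$ trivially, and for $j>i$ by an upward induction, since $\varepsilon(\xvec{s}_i)$ enters $h(\xvec{s}_j)$ only through squared terms $Y(\xvec{s}_v)^2$ ($v<j$) that are themselves invariant under the flip. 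Theorem~\ref{th:distr_sym} packages exactly this invariance, so the explicit computation can be bypassed.
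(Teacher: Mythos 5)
Your proof of part (b) is essentially the paper's own: sign-symmetry of $\xvec{Y}$ from Theorem \ref{th:distr_sym}, then the single-flip identity $(Y(\xvec{s}_1),\ldots,Y(\xvec{s}_i),\ldots,Y(\xvec{s}_n)) \stackrel{d}{=} (Y(\xvec{s}_1),\ldots,-Y(\xvec{s}_i),\ldots,Y(\xvec{s}_n))$ kills both the unconditional and the conditional odd moments; the paper states this for $i=1$ and you add a correct test-function formalization. Part (a), however, takes a genuinely different route. The paper never unrolls the triangular recursion: it works with the closed form $\xvec{Y}^{(2)}=(\xmat{I}-\xmat{A}^2)^{-1}\xvec{\eta}$ of Theorem \ref{th:f_eps}, splits $E(||\xvec{Y}^{(2)}||^r)$ by Cauchy--Schwarz into $E(||(\xmat{I}-\xmat{A}^2)^{-1}||^{2r})$ and $E(||\xvec{\eta}||^{2r})$, and bounds the first factor through nilpotency of $\xmat{A}$ (the finite Neumann series $\xmat{I}+\xmat{A}^2+\cdots+\xmat{A}^{2[(n-1)/2]}$), submultiplicativity of the induced norm, and Jensen's inequality. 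The exponent $8r[(n-1)/2]$ in the hypothesis is exactly the price of that route: with $||\cdot||_1$, the term $E(||\mathrm{diag}(\varepsilon(\xvec{s}_1)^2,\ldots,\varepsilon(\xvec{s}_n)^2)||_1^{4rv})$ involves error moments of order $8rv$ with $v$ running up to $[(n-1)/2]$. Your Minkowski recursion is cleaner and shows that $2r$-th error moments suffice.

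The caveat is what buys you that sharpening: the factorization $E(Y(\xvec{s}_i)^{2r})=E(h(\xvec{s}_i)^r)\,E(\varepsilon(\xvec{s}_i)^{2r})$ requires $\varepsilon(\xvec{s}_1),\ldots,\varepsilon(\xvec{s}_n)$ to be independent, and independence is not among the hypotheses of Lemma \ref{lemma:moments1}. The paper's Cauchy--Schwarz step exists precisely to handle the dependence between $(\xmat{I}-\xmat{A}^2)^{-1}$ and $\xvec{\eta}$ (both are functions of the same errors), so its proof also covers dependent errors; that the authors do not regard independence as a standing assumption is visible elsewhere: Theorem \ref{th:triangularW} adds ``let $\varepsilon(\xvec{s}_1),\ldots,\varepsilon(\xvec{s}_n)$ be independent'' as an explicit extra hypothesis, and part (b) of this very lemma assumes joint sign-symmetry of $\xvec{\varepsilon}$ rather than independence plus marginal symmetry. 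So, measured strictly against the lemma as stated, your part (a) has a gap: the factorization fails for dependent errors, and your recursion cannot be repaired cheaply (replacing it by Cauchy--Schwarz at every location doubles the required moment order at each step of the chain and ends with a condition of exponential order in $n$, far worse than $8r[(n-1)/2]$). Under the model's blanket clause that the components of $\xvec{\varepsilon}$ are i.i.d.\ (Section \ref{sec:spGARCH}), your argument is correct and strictly sharper than the paper's; you should just say explicitly that you are invoking that assumption, since the paper's own proof deliberately avoids it.
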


Below, we focus on the moments of the process in the case of an arbitrary weighting matrix.

\begin{theorem}\label{th:moments}
Suppose that the assumptions of Theorem \ref{th:distr_sym} are satisfied. Let $||.||$ denote some induced matrix norm. Let $r \in \xset{N}$, and suppose that $E(\varepsilon(\xvec{s}_i)^{2r}) < \infty$ for all $i=1,\ldots,n$.
\begin{itemize}
\item[a)] If there exists a constant $\lambda > 0$ such that
\begin{equation*}\label{Kond}
||(\xmat{I} - \xmat{A}^2)^{-1}|| \le \lambda \, ,
\end{equation*}
then it holds that
\begin{itemize}
\item[$a_1$)]  $E( Y(\xvec{s}_i)^{2r} ) < \infty$ for all $i=1,\ldots,n$.
\item[$a_2$)]  $E( Y(\xvec{s}_i)^{2v-1} ) = 0$ and $E( Y(\xvec{s}_i)^{2v-1} | Y(\xvec{s}_j), j=1,\ldots,n, j \neq i) = 0$ for $v=1,\ldots,r, i=1,\ldots,n$.
\end{itemize}

\item[b)] If there exists $0 < \lambda < 1$ such that
\begin{equation*}
 ||\xmat{A}^2|| \le \lambda < 1 \, ,
\end{equation*}
then $||(\xmat{I} - \xmat{A}^2)^{-1}||$ is bounded.
\end{itemize}
\end{theorem}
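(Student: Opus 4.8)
The plan is to dispatch part (b) first, since it produces precisely the sufficient condition needed in part (a), and then to prove $a_1$ and $a_2$.

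For part (b) I would use the Neumann (geometric) series. Because the chosen norm is induced, it is submultiplicative, so $||\xmat{A}^2|| \le \lambda < 1$ gives $\sum_{k=0}^{\infty}||\xmat{A}^{2k}|| \le \sum_{k=0}^{\infty}\lambda^{k} = (1-\lambda)^{-1} < \infty$. Hence $\sum_{k=0}^{\infty}\xmat{A}^{2k}$ converges in norm to $(\xmat{I}-\xmat{A}^2)^{-1}$ and $||(\xmat{I}-\xmat{A}^2)^{-1}|| \le (1-\lambda)^{-1}$. Since $\xmat{A}$ depends on $\xvec{\varepsilon}$, this is a pathwise statement, valid on every realization with $||\xmat{A}^2|| \le \lambda$; it is the asserted boundedness and it supplies the constant $\lambda$ required in part (a).

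For $a_1$ I would start from the representation $\xvec{Y}^{(2)} = (\xmat{I}-\xmat{A}^2)^{-1}\xvec{\eta}$ of Theorem \ref{th:f_eps}. Writing $||\cdot||$ for the given induced norm together with its compatible vector norm, submultiplicativity gives $||\xvec{Y}^{(2)}|| \le ||(\xmat{I}-\xmat{A}^2)^{-1}||\,||\xvec{\eta}|| \le \lambda||\xvec{\eta}||$ pathwise. Using equivalence of vector norms, the nonnegativity $Y(\xvec{s}_i)^2\ge 0$ and $\eta_j\ge 0$ guaranteed under the assumptions of Theorem \ref{th:spARCH}, there are constants $c,c'$ (depending only on $n$ and the norm) with
\[ Y(\xvec{s}_i)^2 \le c\,||\xvec{Y}^{(2)}|| \le c\lambda\,||\xvec{\eta}|| \le c'\lambda\,\max_{1\le j\le n}\eta_j . \]
Raising to the power $r$ and using $(\max_j\eta_j)^r \le \sum_j\eta_j^r$ reduces the finiteness of $E(Y(\xvec{s}_i)^{2r})$ to that of $E(\eta_j^r)$ for each $j$.

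The decisive step is thus $E(\eta_j^r)<\infty$ under the single assumption $E(\varepsilon(\xvec{s}_i)^{2r})<\infty$. The key observation is that
\[ \eta_j = \alpha_j\,\varepsilon(\xvec{s}_j)^2 + \varepsilon(\xvec{s}_j)^2\sum_{v\neq j} w_{jv}\,\alpha_v\,\varepsilon(\xvec{s}_v)^2 \]
is a polynomial in the squared errors that is of degree at most two in each \emph{individual} variable $\varepsilon(\xvec{s}_v)$. Hence every monomial in the multinomial expansion of $\eta_j^r$ carries each $\varepsilon(\xvec{s}_v)$ to a power at most $2r$; by independence its expectation factorizes as $\prod_v E(\varepsilon(\xvec{s}_v)^{2a_v})$ with $2a_v\le 2r$, each factor finite by hypothesis. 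As there are finitely many such monomials, $E(\eta_j^r)<\infty$, which establishes $a_1$. I would emphasize that it is this per-variable (not total) degree bookkeeping that renders the weak moment condition sufficient: a crude bound on $||\xvec{\eta}||^r$ by its total degree $4r$ would spuriously demand $E(\varepsilon^{4r})<\infty$. Finally, $a_2$ follows from Theorem \ref{th:distr_sym}: sign-symmetry with the sign flip applied only to the $i$-th coordinate shows that the conditional law of $Y(\xvec{s}_i)$ given $\{Y(\xvec{s}_j):j\neq i\}$ is symmetric about zero, so all odd conditional moments vanish, and taking expectations yields the unconditional identity; these moments are well defined since $2v-1<2r$ and $E(|Y(\xvec{s}_i)|^{2r})<\infty$ by $a_1$. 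The main obstacle is exactly the bound $E(\eta_j^r)<\infty$, namely resisting the total-degree estimate and instead exploiting the linearity of $\eta_j$ in each $\varepsilon(\xvec{s}_v)^2$ together with independence; the passage from the matrix-norm bound to the componentwise bound on $Y(\xvec{s}_i)^2$ is made painless by the nonnegativity from Theorem \ref{th:spARCH}.
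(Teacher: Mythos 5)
Your proof is correct and follows essentially the same route as the paper's: the pathwise bound $||\xvec{Y}^{(2)}|| \le ||(\xmat{I}-\xmat{A}^2)^{-1}||\,||\xvec{\eta}|| \le \lambda ||\xvec{\eta}||$ for $a_1$), the sign-flip argument from the proof of Lemma \ref{lemma:moments1} for $a_2$), and the Neumann series $\sum_{v\ge 0}\xmat{A}^{2v}$ for b). The only difference is one of completeness: where the paper simply asserts that choosing $||\cdot||_2$ makes the $2r$-th moment finite, you supply the decisive justification---expanding $\eta_j^r$ so that each $\varepsilon(\xvec{s}_v)$ appears to a power at most $2r$ and factorizing the expectation by independence of the errors---which is exactly why the weak hypothesis $E(\varepsilon(\xvec{s}_i)^{2r})<\infty$ suffices here, in contrast to the stronger moment conditions forced by the Cauchy--Schwarz argument in Lemma \ref{lemma:moments1}.
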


The moments of $\varepsilon(\xvec{s}_i)$ are of course bounded if we assume that the support of $\varepsilon(\xvec{s}_i)$ is bounded.

It is worth noting that one important property of the classical, temporal GARCH approach is not fulfilled for each specification of $\xmat{W}$. Generally, it does not hold that $h(\xvec{s}_1)$ is equal to $E(Y(\xvec{s}_1)^2 | Y(\xvec{s}_2),\ldots, Y(\xvec{s}_n) )$. To prove this, we consider the simple case of Example \ref{bsp:n2} ($n=2$). In that case,
\begin{equation*}
E(Y(\xvec{s}_1)^2 | Y(\xvec{s}_2) ) = (\alpha_1 + w_{12} Y(\xvec{s}_2)^2) \;  E(\varepsilon(\xvec{s}_1)^2 | Y(\xvec{s}_2) ) \, .
\end{equation*}
The problem lies in the fact that $\varepsilon(\xvec{s}_1)$ and $Y(\xvec{s}_2)$ are not independent; thus, $E(\varepsilon(\xvec{s}_1)^2 | Y(\xvec{s}_2) )$ does not have to be equal to $E(\varepsilon(\xvec{s}_1)^2)$.
If the conditions of Theorem \ref{th:moments} are fulfilled and if $\varepsilon(\xvec{s}_1)$ and $\varepsilon(\xvec{s}_2)$ are independent, it follows with \eqref{eq:sarch2} that
\begin{eqnarray*}
&&  E( Y(\xvec{s}_1)^2 | Y(\xvec{s}_2) = y_2 )   =  \\
&&
\footnotesize{
\quad \frac{1}{f_{Y(\xvec{s}_2)}(y_2)} \int\limits_{-\infty}^\infty y_1^2 \frac{ \alpha_1 \alpha_2 + \alpha_1 w_{21} y_1^2 + \alpha_2 w_{12} y_2^2 }{(\alpha_1 + w_{12} y_2^2)^{3/2} (\alpha_2 + w_{21} y_1^2)^{3/2}} \;f_{\varepsilon(\xvec{s}_1)}\left( \frac{y_1}{\sqrt{\alpha_1+w_{12} y_2^2}} \right) \;
f_{\varepsilon(\xvec{s}_2)}\left( \frac{y_2}{\sqrt{\alpha_2+w_{21} y_1^2}} \right) \; dy_1
} \; .
\end{eqnarray*}

\begin{figure}
  \centering
    \includegraphics[width=0.65\textwidth,natwidth=500,natheight=500]{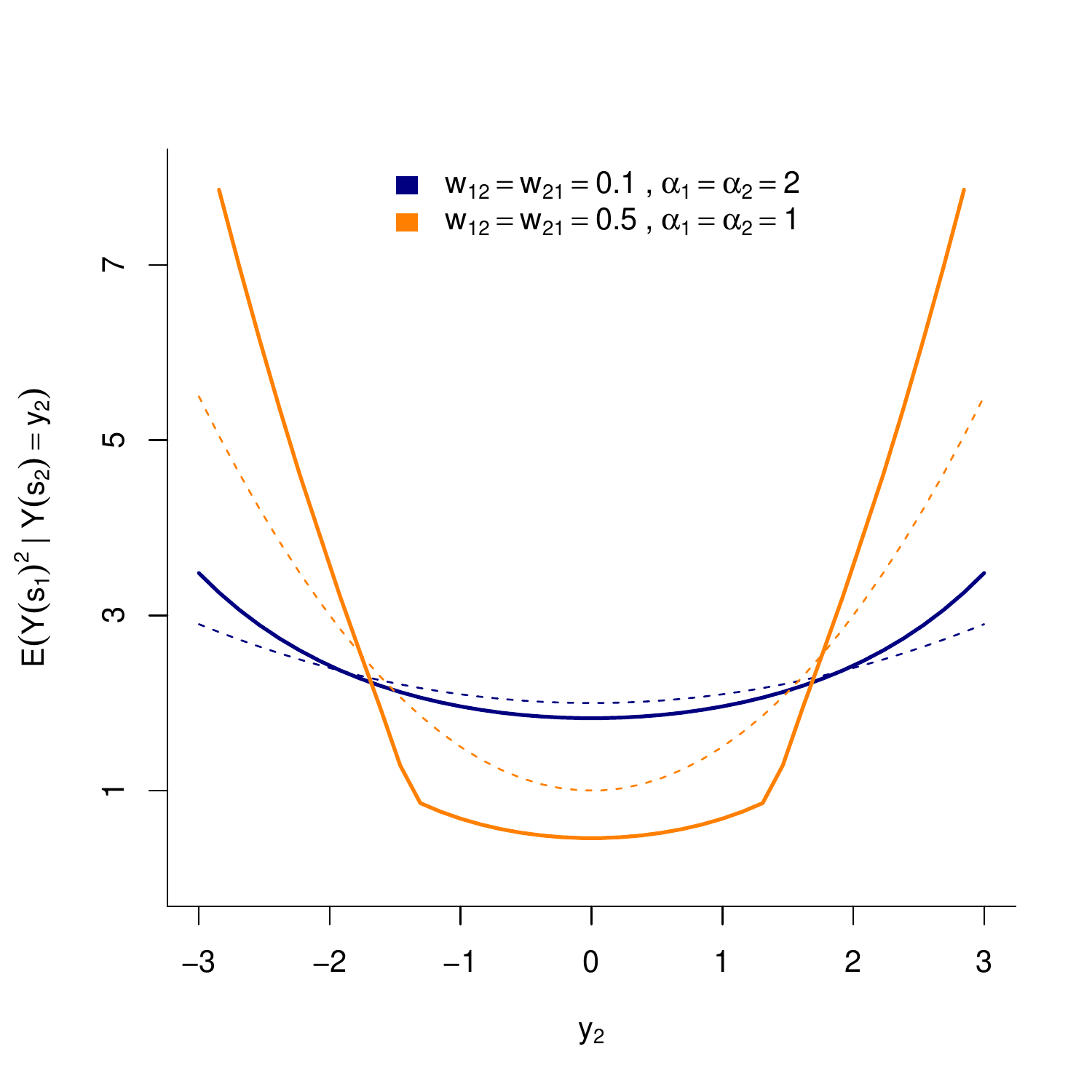}
  \caption{Conditional expectation of $Y(\xvec{s}_1)^2$ given $y_2$ for $n=2$, where $E(Y(\xvec{s}_1)^2 | Y(\xvec{s}_2) = y_2^2)$ is plotted as a solid line and $h(\xvec{s}_1)$ as a dashed line.}\label{fig:cond_exp}
\end{figure}

In Figure \ref{fig:cond_exp}, the conditional expectation of $Y(\xvec{s}_1)^2$ given $Y(\xvec{s}_2)$ is plotted together with $h(\xvec{s}_1)$ for two different specifications of $\xmat{W}$ and $\xvec{\alpha}$. Obviously, $E( Y(\xvec{s}_1)^2 | Y(\xvec{s}_2) = y_2 )$ differs such that the greater the difference from $h(\xvec{s}_1)$,  the larger the chosen elements of $\xmat{W}$. Certainly, the difference between the conditional expectation and $h(\xvec{s}_1)$ vanishes for $\xmat{W}=\xmat{0}$. However, we find that this classical property of an ARCH process, namely, that $E(\varepsilon(\xvec{s}_1)^2 | Y(\xvec{s}_2) ) = h(\xvec{s}_1)$, is fulfilled in the case of a triangular weighting matrix. To summarize, the conditional variance given the neighboring observations depends on these neighboring observations. It is important to note that this is not the case for linear spatial models (cf. \citealt{Cressie93}). Hence, the new model is much more flexible.

\begin{theorem}\label{th:triangularW}
Suppose that the assumptions of Theorem \ref{th:spARCH} are satisfied. Let $w_{ij} \ge 0$ for
$i,j = 1,\ldots,n$ and $w_{ij} = 0$ for $1 \le i \le j \le n$. Suppose that $E(\varepsilon(\xvec{s}_i)^{8[(n-1)/2]}) < \infty$ for all $i=1,\ldots,n$ and let $\varepsilon(\xvec{s}_1),\ldots, \varepsilon(\xvec{s}_n)$ be independent. It then holds for each $k\in \{1,\ldots,n\}$ that
\begin{equation*}
E(Y(\xvec{s}_k)^2 | Y(\xvec{s}_j), j = 1, \ldots, k-1) = h_k \, .
\end{equation*}
\end{theorem}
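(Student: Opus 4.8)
The plan is to exploit the oriented structure that a strictly lower triangular weighting matrix imposes on the process. Because $w_{kj} = 0$ whenever $j \ge k$, the $k$-th volatility collapses to
\[
h_k = \alpha_k + \sum_{j=1}^{k-1} w_{kj}\, Y(\xvec{s}_j)^2 ,
\]
so that $h_k$ is a measurable function of $Y(\xvec{s}_1), \ldots, Y(\xvec{s}_{k-1})$ alone. The triangular hypothesis together with Lemma \ref{lemma:triangular} makes $(\xmat{I} - \xmat{A}^2)^{-1}$ nonnegative, so Theorem \ref{th:spARCH} applies and all $Y(\xvec{s}_i)^2$ and all $h_i$ are nonnegative; hence the square roots in $Y(\xvec{s}_k) = \sqrt{h_k}\,\varepsilon(\xvec{s}_k)$ are well defined. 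Moreover the moment assumption $E(\varepsilon(\xvec{s}_i)^{8[(n-1)/2]}) < \infty$ is exactly the $r=1$ instance of Lemma \ref{lemma:moments1}a, which yields $E(Y(\xvec{s}_k)^2) < \infty$ and thereby makes the conditional expectation in the statement meaningful.

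First I would establish by induction on $k$ that $Y(\xvec{s}_k)$ is a measurable function of $(\varepsilon(\xvec{s}_1), \ldots, \varepsilon(\xvec{s}_k))$ only. The base case is immediate since $Y(\xvec{s}_1) = \sqrt{\alpha_1}\,\varepsilon(\xvec{s}_1)$. For the inductive step, $Y(\xvec{s}_k) = \sqrt{h_k}\,\varepsilon(\xvec{s}_k)$, where $h_k$ depends only on $Y(\xvec{s}_1), \ldots, Y(\xvec{s}_{k-1})$, each of which by the induction hypothesis is a function of $\varepsilon(\xvec{s}_1), \ldots, \varepsilon(\xvec{s}_{k-1})$; appending the factor $\varepsilon(\xvec{s}_k)$ keeps $Y(\xvec{s}_k)$ a function of the first $k$ errors. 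Consequently $(Y(\xvec{s}_1), \ldots, Y(\xvec{s}_{k-1}))$ is a function of $(\varepsilon(\xvec{s}_1), \ldots, \varepsilon(\xvec{s}_{k-1}))$, and since the $\varepsilon(\xvec{s}_i)$ are independent, $\varepsilon(\xvec{s}_k)$ is independent of the conditioning vector $(Y(\xvec{s}_1), \ldots, Y(\xvec{s}_{k-1}))$.

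With this independence the computation is routine. As $h_k$ is measurable with respect to $\sigma(Y(\xvec{s}_1), \ldots, Y(\xvec{s}_{k-1}))$, the ``taking out what is known'' property lets it factor out, giving
\[
E(Y(\xvec{s}_k)^2 \mid Y(\xvec{s}_j),\, j<k) = h_k\, E(\varepsilon(\xvec{s}_k)^2 \mid Y(\xvec{s}_j),\, j<k) = h_k\, E(\varepsilon(\xvec{s}_k)^2) = h_k ,
\]
where the middle equality uses the independence just established and the last uses $E(\varepsilon(\xvec{s}_k)^2) = 1$, which follows from $Cov(\xvec{\varepsilon}) = \xmat{I}$. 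The case $k=1$ is the degenerate instance of conditioning on the empty collection and reduces to $E(Y(\xvec{s}_1)^2) = \alpha_1 = h_1$.

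I expect the main obstacle to be the independence argument, since everything hinges on showing that the conditioning variables $Y(\xvec{s}_1), \ldots, Y(\xvec{s}_{k-1})$ never involve the error $\varepsilon(\xvec{s}_k)$. This ``no look-ahead'' property is precisely what the triangular assumption $w_{kj} = 0$ for $j \ge k$ buys, and it is exactly the step that fails for a general weighting matrix, as the preceding $n=2$ discussion illustrates. Once the induction cleanly propagates this property, pulling $h_k$ out of the conditional expectation and invoking $E(\varepsilon(\xvec{s}_k)^2) = 1$ is immediate.
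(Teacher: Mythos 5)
Your proof is correct, but it takes a genuinely different route from the paper's. The paper works with densities: because $\xmat{W}$ is strictly triangular, the Jacobian of the transformation $\xvec{\varepsilon} \mapsto \xvec{Y}$ reduces to $\prod_{j=1}^{n} h_j^{-1/2}$, so the joint density of $(Y(\xvec{s}_1),\ldots,Y(\xvec{s}_k))$ factors as $\prod_{j=1}^{k} h_j^{-1/2} f_{\varepsilon(\xvec{s}_j)}\bigl(y_j/\sqrt{h_j}\bigr)$; the conditional expectation is then obtained by dividing by the marginal density and integrating $y_k^2$ against the conditional density, with $Var(\varepsilon(\xvec{s}_k))=1$ giving the value $h_k$. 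You instead argue measure-theoretically: an induction shows $Y(\xvec{s}_k)$ is a function of $(\varepsilon(\xvec{s}_1),\ldots,\varepsilon(\xvec{s}_k))$ alone, hence $\varepsilon(\xvec{s}_k)$ is independent of the conditioning vector, and ``taking out what is known'' plus $E(\varepsilon(\xvec{s}_k)^2)=1$ finishes the computation. Each approach has its advantages. The paper's calculation also delivers the explicit factorized conditional density, which is reused directly for the likelihood in Section \ref{sec:inference}. Your argument is more elementary and strictly more general: it never requires $\xvec{\varepsilon}$ to possess a density (the paper's transformation formula implicitly does, along with $\xvec{\alpha}>\xvec{0}$), and since $Y(\xvec{s}_k)^2 = h_k\,\varepsilon(\xvec{s}_k)^2 \ge 0$, the conditional expectation identities you invoke hold for nonnegative variables without any integrability hypothesis, so the moment condition is not even essential to your route (your citation of Lemma \ref{lemma:moments1} is a nicety; note that lemma is stated for $n\ge 3$). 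Your closing remark correctly identifies why the argument collapses for non-triangular $\xmat{W}$: it is exactly the independence of $\varepsilon(\xvec{s}_k)$ from the conditioning variables that fails there, consistent with the paper's $n=2$ counterexample.
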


Principally, it is not necessary that the matrix $\xmat{W}$ of spatial weights be a triangular matrix, but there should exist a permutation matrix $\xmat{P}$ such that $\ddot{\xmat{W}} = \xmat{P}\xmat{W}\xmat{P}'$ is triangular. In this case, the observations also must be permuted; i.e., the permuted vector of observations is $\ddot{\xvec{Y}} = \xmat{P}\xvec{Y}$.

Furthermore, one may see that
\begin{equation*}
E(Y(\xvec{s}_k)^2 | Y(\xvec{s}_j), j = k+1, \ldots, n) = h_k \, ,
\end{equation*}
if the weighting matrix $\xmat{W}$ is a strictly upper triangular matrix. In the following section, we take a closer look at two different specifications of the weighting matrix $\xmat{W}$.

\subsection{Choice of the Weighting Matrix $\xmat{W}$}\label{sec:weighting_matrix}

In this section, we suggest two different specifications of the matrix $\xmat{W}$ of spatial weights to adapt the process to various situations. In particular, the second matrix is a triangular matrix; i.e., for this specification, the support of $\xvec{\varepsilon}$ must not be bounded, and Theorem \ref{th:triangularW} can be applied.

First, we present a possible method to model more than one lag in space. Assume that the set $\zeta(\delta, \xvec{s}_i) = \{j : ||\xvec{s}_i - \xvec{s}_j|| \in (\delta - c, \delta]\}$ consists of all locations $j$ for which the distance from location $\xvec{s}_i$ is between $\delta - c$ and $\delta$. The distance is measured by some predefined metric $||\xvec{a}-\xvec{b}||$ on the considered space induced by an arbitrary norm $||\cdot||$. The spatial lag constant $c$ is equivalent to the time period of one lag in the temporal setting, which could be one day, week, year, et cetera. In the spatial setting, the constant $c$ must be chosen according to specific requirements of the process, e.g., 1 $\mu m$ - 1 $mm$ (microbiology), 1 $cm$ - 1 $m$ (materials science) or 1 $km$ - 100 $km$ (macroeconomics). Finally, the weighting matrix $\xmat{W}$ is based on an arbitrarily chosen matrix $\tilde{\xmat{W}}$ fulfilling the assumptions introduced above, such as the binary contiguity matrix, nearest-neighbor matrix, or inverse-distance matrix (cf. \citealt{Elhorst10}). The elements of $\xmat{W}$ can be specified as
\begin{equation} \label{eq:W_spARCH_p}
w_{ij} = \left\{
\begin{array}{ccc}
\tilde{w}_{ij} \, \sum\limits_{k=1}^{p}  \rho_k \xset{1}_{\zeta(kc, \xvec{s}_i)}(j) & \text{for} & i \neq j \\
0 & \text{for} & i = j
\end{array} \right. \qquad \forall i,j=1,\ldots,n \, ,
\end{equation}
where $\xset{1}_{A}$ is the indicator function on the set $A$. Hence, two locations $\xvec{s}_i$ and $\xvec{s}_j$ are weighted by $\rho_1 \tilde{w}_{ij}$ if they are first lag neighbors; i.e., the distance between $\xvec{s}_i$ and $\xvec{s}_j$ lies between zero and $c$. Moreover, these two locations are weighted by $\rho_2 \tilde{w}_{ij}$ if the distance is between $c$ and $2c$. In this manner, as many as $p \in \{1,2, \ldots, \left\lceil c^{-1} \max_{ij}||\xvec{s}_i-\xvec{s}_j||\right\rceil\}$ spatial lags can be included in the process.  We refer to this specification of $\xmat{W}$ as the spatial ARCH process of order $p$ (spARCH($p$)). Because the matrix $\tilde{\xmat{W}}$ is assumed to be known, it remains to estimate only $p$ spatial autoregressive parameters $\rho_1,\ldots,\rho_p$.

Second, an example to model processes with some direction is presented. For instance, oriented processes could spread from some center/origin into every direction of the considered space (e.g., epidemiology or disease mapping), or the process could evolve in one direction, e.g., from north to south (e.g., ocean currents or wind speed). In particular, we focus on the first case of an oriented process. Therefore, assume that there is some known origin $\xvec{s}_0$ of the spatial process. It is worth noting that the origin could also be estimated. Without loss of generality, one can order the locations $\xvec{s}_1,\ldots,\xvec{s}_n$  with respect to the distance from the center. Thus,
\begin{equation*}
0 < ||\xvec{s}_1 - \xvec{s}_0|| \leq ||\xvec{s}_2 - \xvec{s}_0|| \leq \ldots \leq ||\xvec{s}_n - \xvec{s}_0|| \, .
\end{equation*}
Assuming additionally that each location is influenced only by the locations closer to the center leads to an upper triangular representation of $\xmat{W}$; i.e.,
\begin{equation} \label{eq:W_oriented}
w_{ij} = \left\{
\begin{array}{ccc}
\tilde{w}_{ij}  & \text{for} & ||\xvec{s}_i - \xvec{s}_0|| < ||\xvec{s}_j - \xvec{s}_0||  \\
0 & \multicolumn{2}{c}{\text{otherwise}}
\end{array} \right.
= \left\{
\begin{array}{ccc}
\tilde{w}_{ij}  & \text{for} & i < j  \\
0 & \text{for}  & i \geq j
\end{array} \right. \, .
\end{equation}

\begin{figure}
  \centering
    \includegraphics[width=0.45\textwidth,natwidth=500,natheight=500]{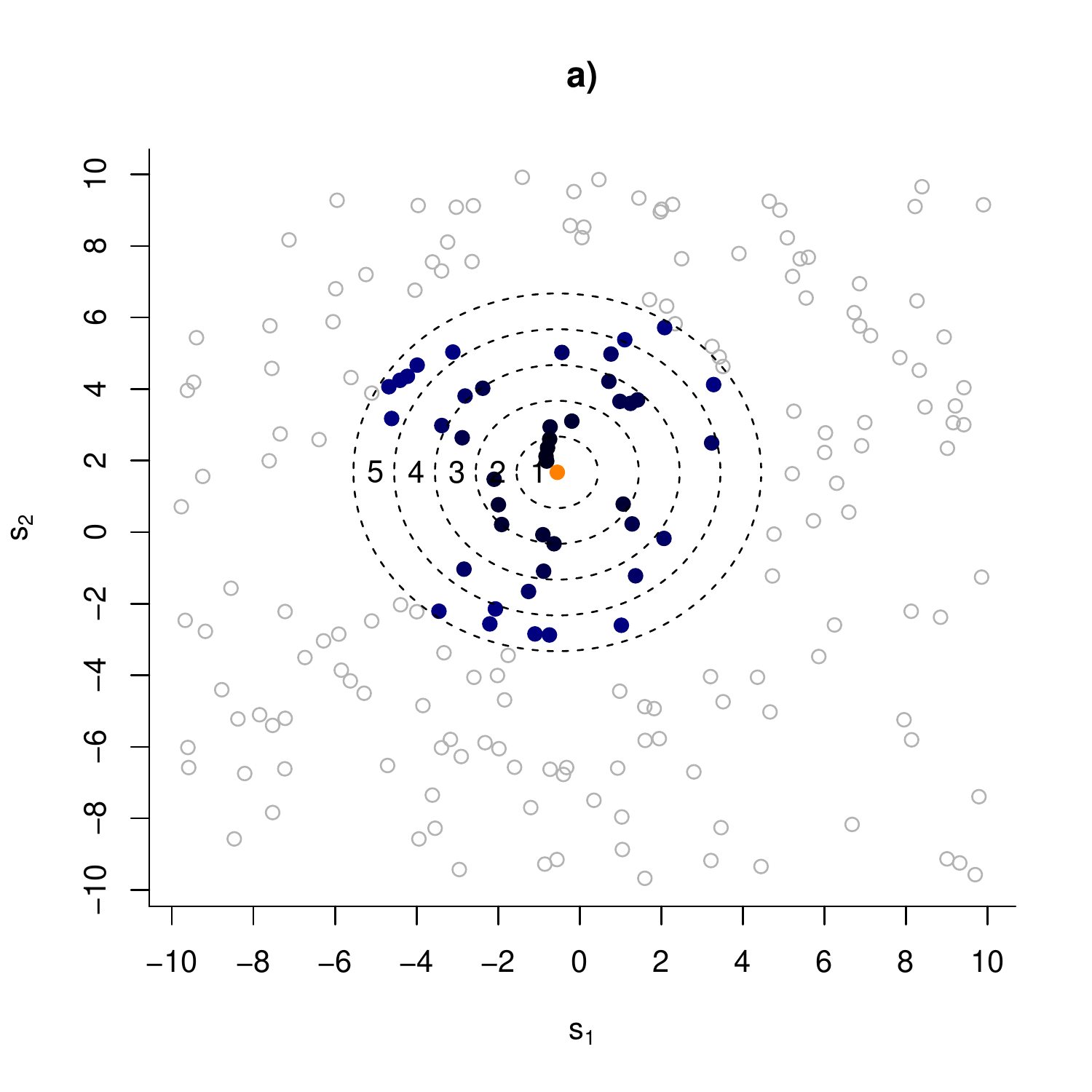}
    \includegraphics[width=0.45\textwidth,natwidth=500,natheight=500]{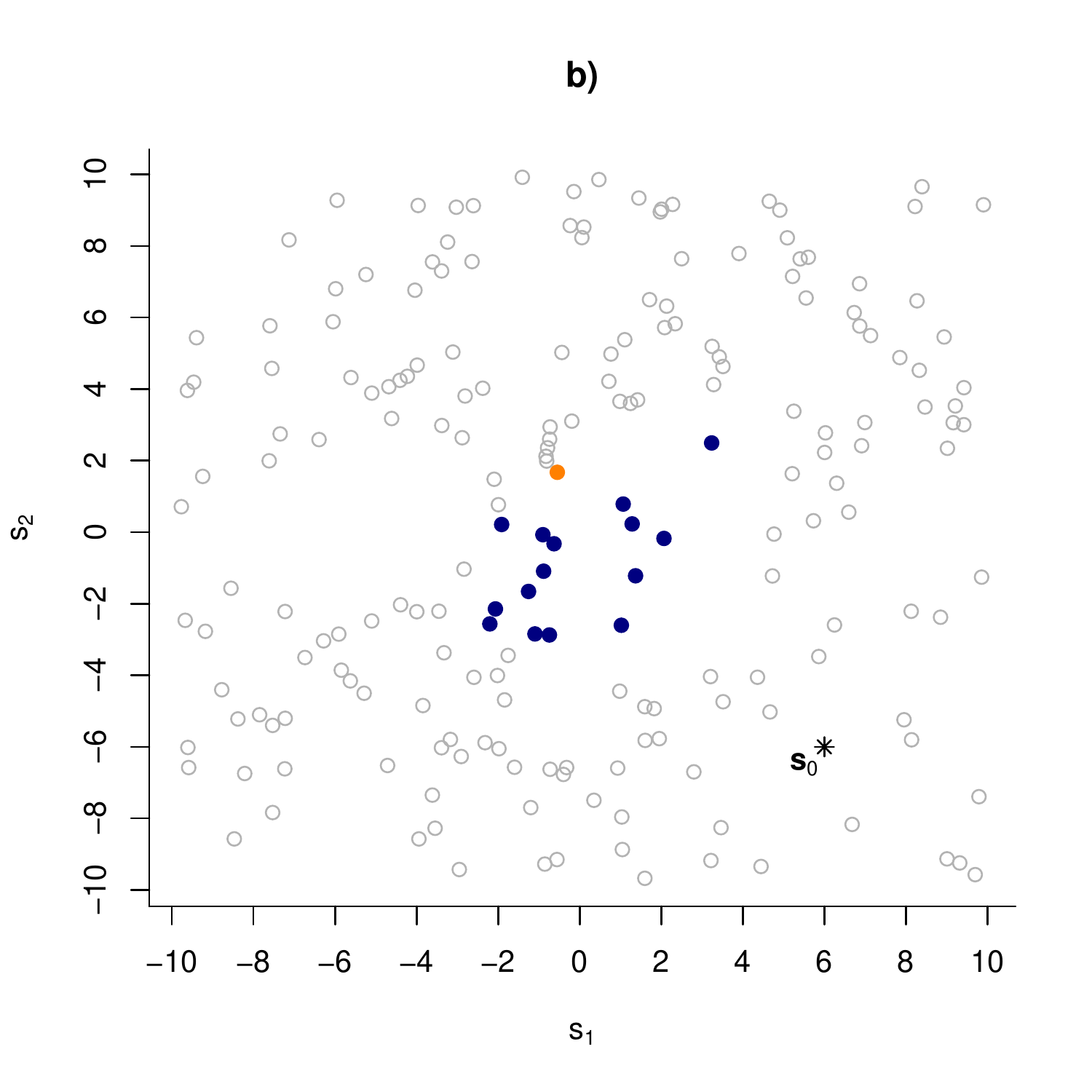}
  \caption{Representation of the positive elements in $\xmat{W}$ (colored in grey, filled dots) for some location $i$ (colored in red) regarding a) a spARCH(5) process and b) an oriented process with center $\xvec{s}_0$. All locations that influence the red location $i$ are colored in grey, whereas all other location having no influence on $i$ are drawn as empty circles. Matrix $\tilde{\xmat{W}}$ is chosen as the binary matrix of the 50 nearest neighbors. Moreover, the 200 locations $\xvec{s} = (s_1, s_2)$ result from a continuous process in two-dimensional space.}\label{fig:choice_W}
\end{figure}

Both examples of $\xmat{W}$ are illustrated in Figure \ref{fig:choice_W}. First, the proposed spARCH($p$) is illustrated in the left-hand figure a) for $p=5$. The positive weights of the $i$-th row of $\xmat{W}$ are drawn with filled circles; i.e., the variance of the observations at all locations, which are drawn with filled circles, influence the variance of the observation at location $i$ colored in red. The spatial lag constant $c$ is assumed to be $1$, and the distance between the locations is measured using the Euclidean norm.
Second, we illustrate the proposed oriented process on the right-hand side of Figure \ref{fig:choice_W}. The point of origin $\xvec{s}_0$ is drawn as a star, such that one may see that only locations closer to $\xvec{s}_0$ have an influence on the location $i$. Regarding both cases a) and b), we choose $\tilde{\xmat{W}}$ as the $q$-nearest-neighbor matrix, where $q=50$.

Finally, we provide the link to classical heteroscedastic time-series models and other propositions of spatial ARCH models in Table \ref{table:summary}. In particular, we show how the parameters and the spatial weighting matrix must be chosen to transfer the introduced model to the classical ARCH($p$) process proposed by \cite{Engle82}. It is worth noting that the support of the error distribution does not have to be bounded because $\xmat{W}$ is triangular.

\begin{table}
    \begin{center}
    \caption{Summary of several covered settings.}\label{table:summary}
    \begin{footnotesize}
    \begin{tabular}{lcclc}
    \hline \hline
    Model & $q$ & $D_s$ & $\xmat{W}$ & triangular \\
    \hline
    \multicolumn{5}{l}{\emph{time-series models}} \\
    $\quad$ARCH(1) \cite{Engle82} & 1 & $\xset{Z}$ & $\left(\alpha\xset{1}_{\{s_i - s_j = 1\}}\right)_{i,j=1,\ldots,n}$ & $\checkmark$ \\
    $\quad$ARCH($p$) \cite{Engle82} & 1 & $\xset{Z}$ & $\left(\sum_{k=1}^{p}\alpha_k\xset{1}_{\{s_i - s_j = k\}}\right)_{i,j=1,\ldots,n}$ & $\checkmark$  \\
    \multicolumn{5}{l}{\emph{spatiotemporal models}} \\
    $\quad$spatial ARCH & & & & \\
    $\quad$\cite{Borovkova12} & 1 & $\xset{Z}$ & $\left((a_{1,i} + a_{2,i} w_{ij}) \xset{1}_{\{s_i - s_j = 1\}}\right)_{i,j=1,\ldots,n}$ & $\checkmark$ \\
    \multicolumn{5}{l}{\emph{spatial models}} \\
    $\quad$SARCH(1) \cite{Bera04} & $2,3$ & $\xset{Z}^q$, $\xset{R}^q$ & $\left(\alpha_1 w_{ij}^2\right)_{i,j=1,\ldots,n}$ & \\
    \multicolumn{5}{l}{\emph{new propositions (multidimensional)}} \\
    $\quad$spARCH($p$) & $\geq 1$ & $\xset{Z}^q$, $\xset{R}^q$ & cf. eq. \eqref{eq:W_spARCH_p} & \\
    $\quad$oriented & $\geq 1$ & $\xset{Z}^q$, $\xset{R}^q$ & cf. eq. \eqref{eq:W_oriented} & $\checkmark$ \\
    \hline
    \end{tabular}
    \end{footnotesize}
    \end{center}
\end{table}

\section{Statistical Inference}\label{sec:inference}

To date, the weighting matrix $\xmat{W}$ has mostly been chosen to be an arbitrary matrix with nonnegative elements and zeros on the main diagonal. To ensure that $h(\xvec{s}_i)$ and $Y(\xvec{s}_i)$ are nonnegative, the weights must fulfill an additional condition as shown, e.g., in Theorem \ref{th:spARCH} and Example \ref{bsp:finite_support}. These conditions connect the weights with the support of $\xvec{\varepsilon}(\xvec{s}_i)$. In applications, the weighting matrix $\xmat{W}$ may depend on additional parameters as discussed earlier.


First, we consider the model
\begin{equation*}
h(\xvec{s}_i) = \alpha + \rho \sum_{v=1}^{i-1} \tilde{w}_{iv} Y(\xvec{s}_v)^2 , \quad i=1,\ldots,n
\end{equation*}
with $\tilde{w}_{iv} \ge 0$ for $i, v = 1,\ldots, n$ and $\tilde{w}_{iv} = 0$ for $1 \le i \le v \le n$. Thus, $\xmat{W}$ is chosen as a lower triangular matrix. It is assumed that $\alpha > 0$ and $\rho > 0$.

Suppose that $\varepsilon(\xvec{s}_1),\ldots, \varepsilon(\xvec{s}_n)$ are independent and identically distributed. Let $f_\varepsilon$ denote its density function and let $f_\varepsilon$ be differentiable.  Moreover, let $\xvec{y} = (y_1,\ldots,y_n)^\prime$ be the vector of observations and $h_i = h(\xvec{s}_i; \xvec{y})$. Using \eqref{eq:transformation}, the density of $\xvec{Y}$ is given by
\begin{eqnarray*}
f_{\xvec{Y}}(\xvec{y}) & = & \prod_{i=1}^n \left( f_{\varepsilon}\left( \frac{y_i}{\sqrt{h}_i} \right) \; \frac{1}{\sqrt{h_i}} \right) = \prod_{i=1}^n f_{Y(\xvec{s}_i)|Y(\xvec{s}_{i-1}),\ldots,Y(\xvec{s}_1)}(y_i|y_{i-1},\ldots,y_1)
\end{eqnarray*}
and
\begin{eqnarray*}
\log(f_{\xvec{Y}}(\xvec{y})) & = & \sum_{i=1}^n \left( \log\left(f_{\varepsilon}\left( \frac{y_i}{\sqrt{h}_i} \right)\right) - \frac{1}{2} \log(h_i) \right) \, .
\end{eqnarray*}

Let $\tilde{f} = f_{\varepsilon}^\prime/f_\varepsilon$. Putting the partial derivatives of $\log(f_{\xvec{Y}}(\xvec{y}; \alpha, \rho))$ with respect to $\alpha$ and $\rho$ equal to zero, we obtain the estimators $\hat{\alpha}$ and $\hat{\rho}$ that satisfy
\begin{eqnarray}\label{para1}
\sum_{i=1}^n \frac{1}{\hat{\alpha} + \hat{\rho} A_i} & = & - \sum_{i=1}^n \frac{y_i}{(\hat{\alpha} + \hat{\rho} A_i)^{3/2}} \; \tilde{f}\left(\frac{y_i}{\sqrt{\hat{\alpha} + \hat{\rho} A_i}}\right) ,  \\
\sum_{i=2}^n \frac{A_i}{\hat{\alpha} + \hat{\rho} A_i} & = & - \sum_{i=1}^n \frac{A_i y_i}{(\hat{\alpha} + \hat{\rho} A_i)^{3/2}} \; \tilde{f}\left(\frac{y_i}{\sqrt{\hat{\alpha} + \hat{\rho} A_i}}\right)
\label{para2}
\end{eqnarray}
with $A_i = \sum_{v=1}^{i-1} \tilde{w}_{iv} y_v^2$ for $i = 1, \ldots, n$.
If the corresponding information matrix $\xmat{B}_n$ is positive definite, then the results of \cite{Crowder76} can be applied. It follows that there is a unique solution of \eqref{para1} and \eqref{para2}. The estimators $\hat{\alpha}$ and $\hat{\rho}$ are consistent, and $(\hat{\alpha}, \hat{\rho})$ is approximately distributed as ${\cal N}_2(\xvec{0}, \xmat{B}_n^{-1})$. This result can be used for testing the hypotheses on the parameters $\alpha$ and $\rho$.

For instance, assuming $f_\varepsilon$ to be the standard normal distribution. Then, it follows that $\tilde{f}(x)=-x$, and the information matrix is given by
\begin{equation*}
\xmat{B}_n = - E\left( \begin{array}{cc}
- \frac{1}{2} \sum\limits_{i=1}^n \frac{1}{(\alpha+\rho A_i)^2} + \sum\limits_{i=1}^n \frac{y_i^2}{(\alpha+\rho A_i)^3} & \quad \quad - \frac{1}{2} \sum\limits_{i=1}^n \frac{A_i}{(\alpha+\rho A_i)^2} + \sum\limits_{i=1}^n \frac{A_i y_i^2}{(\alpha+\rho A_i)^3}\\[0.5cm]
- \frac{1}{2} \sum\limits_{i=1}^n \frac{A_i}{(\alpha+\rho A_i)^2} + \sum\limits_{i=1}^n \frac{A_i y_i^2}{(\alpha+\rho A_i)^3} & \quad \quad - \frac{1}{2} \sum\limits_{i=1}^n \frac{A_i^2}{(\alpha+\rho A_i)^2} + \sum\limits_{i=1}^n \frac{A_i^2 y_i^2}{(\alpha+\rho A_i)^3}
\end{array} \right) \, .
\end{equation*}

These results can be easily extended to more general models, such as the approach described in (\ref{eq:W_spARCH_p}). Moreover, in this section, we focused on lower triangular matrices, but all of the results presented above also hold for upper triangular matrices.


Next, we want to consider a model in which the weight matrix is neither a lower nor an upper triangular matrix. Let
\begin{equation*}
h(\xvec{s}_i) = \alpha + \rho \sum_{v=1}^{n} \tilde{w}_{iv} Y(\xvec{s}_v)^2 , \quad i=1,\ldots,n
\end{equation*}
with $\tilde{w}_{iv} \ge 0$ for $i, v = 1,\ldots, n$ and $\tilde{w}_{ii} = 0$ for $1 \le i \le n$. It is assumed that $\alpha > 0$ and $\rho > 0$. For these settings, the determinant
\begin{equation*}
| \det\left( \left( \frac{\partial y_j/\sqrt{h}_j}{\partial y_i} \right)_{i,j=1,\ldots,n}\right) | \,
\end{equation*}
must be computed. For practical applications, it is much easier to compute the logarithm of this determinant; i.e.,
\begin{equation*}
\log | \det\left( \left( \frac{\partial y_j/\sqrt{h}_j}{\partial y_i} \right)_{i,j=1,\ldots,n}\right) | = \sum_{i=1}^n \left(2\log{y_i} - \frac{3}{2}\log{h_i}\right) \; + \;  \sum_{i=1}^{n} \log |\lambda_i| \, ,
\end{equation*}
where $\lambda_i$ is the $i$-th eigenvalue of $\left( \text{diag}\left(\frac{h_1}{y_1^2},\ldots, \frac{h_n}{y_n^2}\right) + \rho \xmat{W}^\prime \right)$.
In addition, it is important to note that the weighting matrix is usually sparse, and there are positive weights up to the $k$-th subdiagonal, where $k = \max\{|i-j| : w_{ij} > 0\}$. If the locations are well ordered (e.g., by the distance to an arbitrarily chosen location), $k$ is much smaller than $n$.

\section{Applications}\label{sec:empirical}

In the following section, the focus is on applications of the suggested spatial ARCH model. In particular, we extend the well-known spatial autoregressive process by assuming conditional heteroscedastic residuals. Finally, the model parameters of such a model are estimated for a real data example. In the ensuing Section \ref{sec:MC}, we analyze the performance of the estimators in more detail by reporting the results of an extensive simulation study.

\subsection{Spatial Autoregressive Process with Conditional Hetero\-sce\-dastic Residuals: SARspARCH}

For the definition of the spatial autoregressive process, we must introduce a further matrix $\xmat{B}$ of spatial weights. This matrix $\xmat{B}$ could differ from the aforementioned weighting matrix $\xmat{W}$. However, it is also assumed that $\xmat{B}$ is non-stochastic and nonnegative with zeros on the main diagonal. Furthermore, let $\lambda$ denote the spatial autoregressive coefficient and $\mu$ be the mean parameter. The model is then defined as follows:
\begin{equation}
    \xvec{Y} = \mu\xvec{1} + \lambda \xmat{B} \xvec{Y} + \xvec{\xi} \; , \text{i.e.} \quad \xvec{Y} =  (\xmat{I} - \lambda \xmat{B})^{-1}(\mu \xvec{1} + \xvec{\xi})\, . \label{eq:SARspARCH1}
\end{equation}
The vector of disturbances $\xvec{\xi} = \left(\xi_1, \ldots, \xi_n \right)$ follows a spatial ARCH according to the suggested model in \eqref{eq:initial}. Consequently, the error process is given by
\begin{eqnarray*}
\xvec{\xi} = \text{diag}(\xvec{h})^{1/2} \xvec{\varepsilon} \,
\end{eqnarray*}
and
\begin{eqnarray}
\xvec{h} = \xvec{\alpha} + \xmat{W}  \, \text{diag}(\xvec{\xi})\xvec{\xi} \, . \label{eq:SARspARCH2}
\end{eqnarray}

In Figure \ref{fig:simulated_SARspARCH}, we plotted four different simulated spatial models to illustrate the behavior of these processes and compare them with respect to their properties. Moreover, the respective spatial autocorrelation functions (ACF) are shown in Figure \ref{fig:ACF_simulated_SARspARCH}. For the simulation, the spatial domain is assumed to be a lattice; i.e., $D_{\xvec{s}} = \{(i,j) \in \xset{Z}^2: i,j = 1, \ldots, d \}$. In plot (a), the innovations $\xvec{\varepsilon}$ truncated on the interval $[-a,a]$ are shown. The respective bound $a$ results from the choice of the weighting matrix $\xmat{W}$. In particular, this matrix is assumed to be the product of the parameter $\rho$ and a known weighting matrix $\tilde{\xmat{W}}$; i.e., $\xmat{W} = \rho \tilde{\xmat{W}}$. This setting was discussed in Section \ref{sec:weighting_matrix} as a spARCH($1$) model, where $c$ equals $1$, and the considered metric is induced by the maximum norm. Moreover, the known matrix $\tilde{\xmat{W}}$ of spatial weights is a classical row-standardized Rooks contiguity matrix, and $\xvec{\alpha} = \alpha_0 \xvec{1}$. In the next plot (b) of Figure \ref{fig:simulated_SARspARCH}, the spatial autoregressive process with white noise $\xvec{\varepsilon}$ is plotted. The simulation shows the classical behavior of a spatial autoregressive process; i.e., one can observe clusters of high and low values. The proposed spARCH model and SARspARCH model are presented in the second row of Figure \ref{fig:simulated_SARspARCH}. On the left-hand side, the simulation of the spatial ARCH process $\xvec{\xi}$ is shown. Obviously, that process differs from the white noise process in (a). The clusters of high and low variance are characterized by the luminance of the colors. Thus, the variance is low in areas where the observations have a light color, and the variance is high in areas of deep colors. This is supported by the ACF function in Figure \ref{fig:ACF_simulated_SARspARCH}, where the squared observations are positively correlated. The simulation of the SARspARCH process according to \eqref{eq:SARspARCH1} and \eqref{eq:SARspARCH2} yields the last image (d) in Figure \ref{fig:simulated_SARspARCH}.

\begin{figure}
  \begin{center}
  \includegraphics[width=0.4\textwidth,natwidth=500,natheight=500]{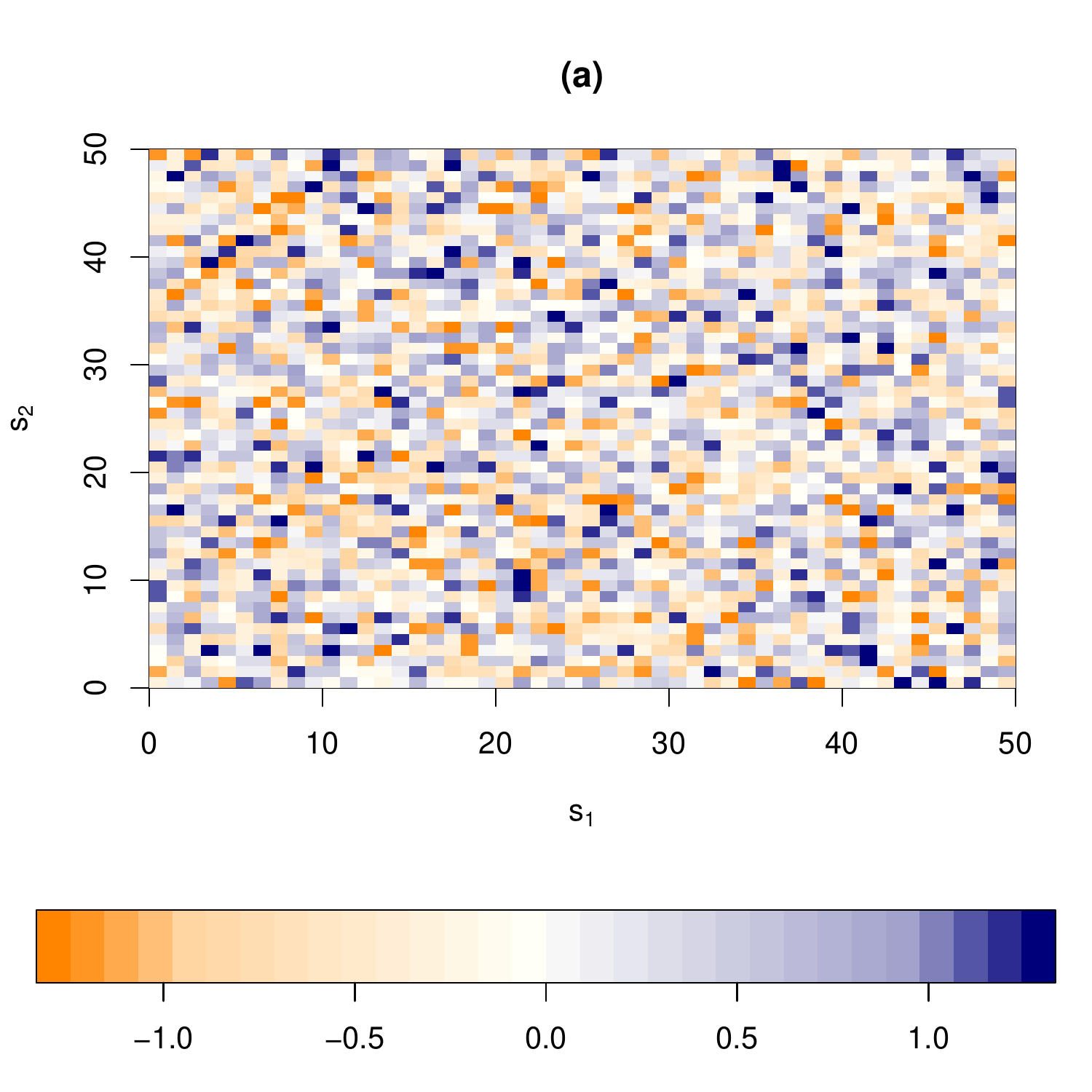}
  \includegraphics[width=0.4\textwidth,natwidth=500,natheight=500]{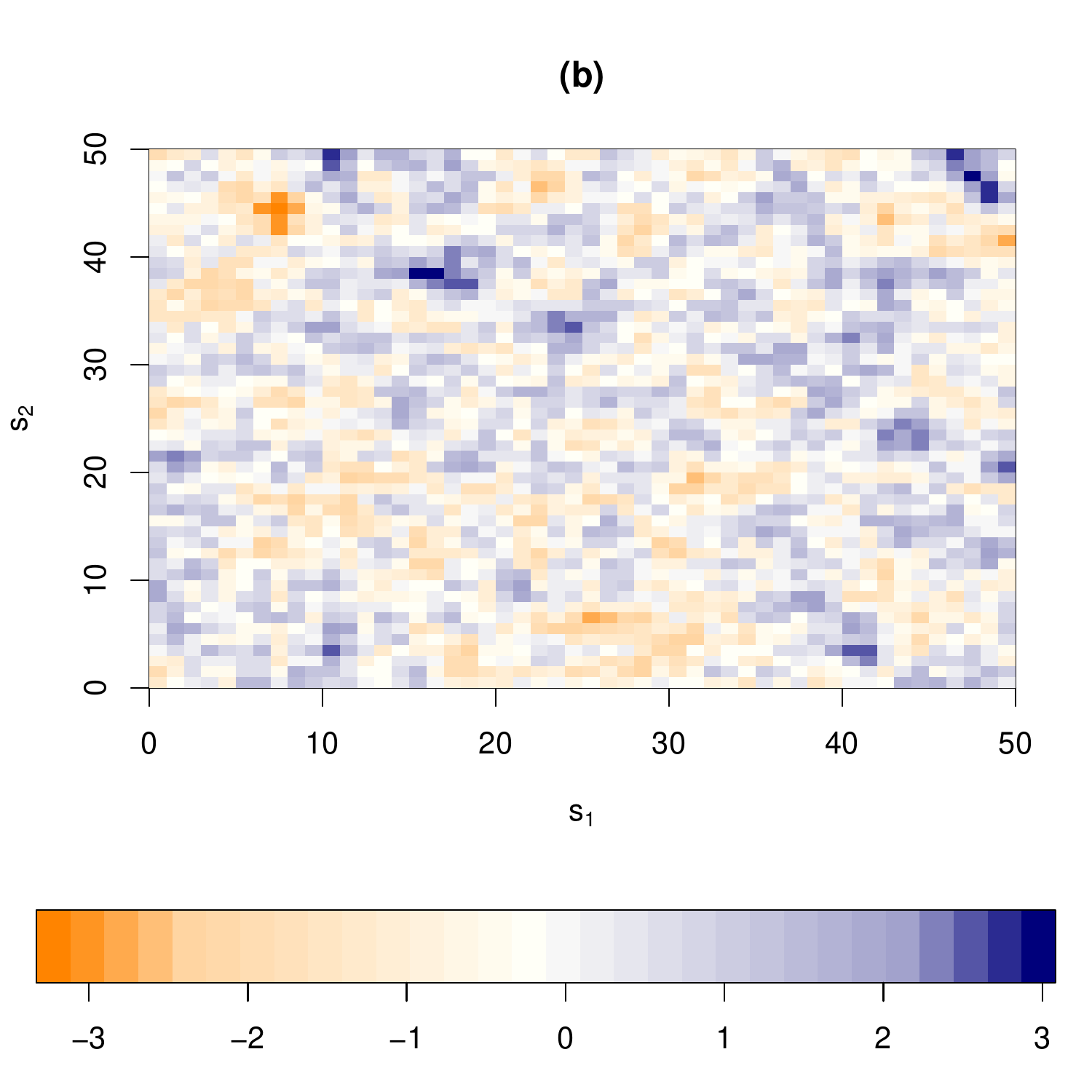}\\
  \includegraphics[width=0.4\textwidth,natwidth=500,natheight=500]{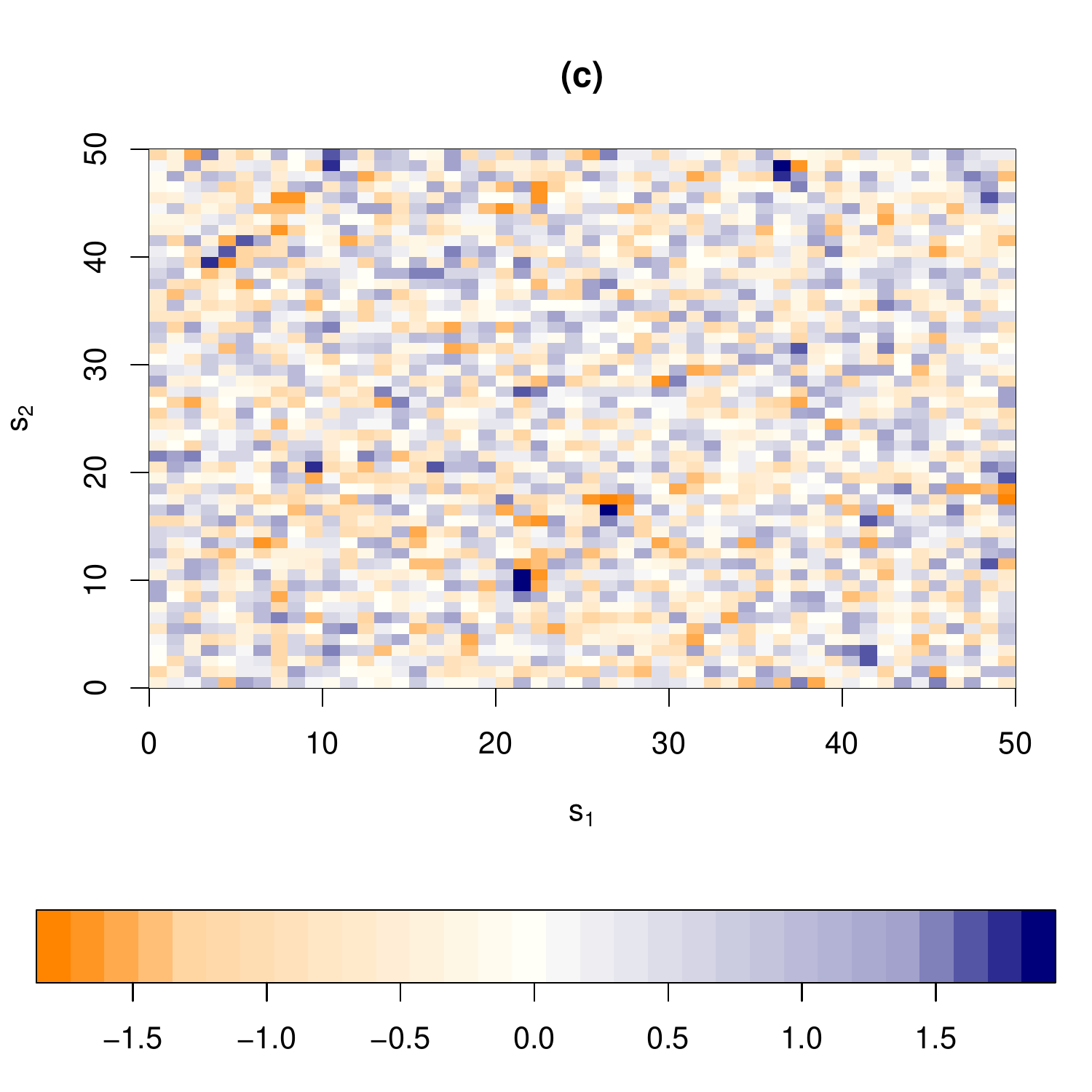}
  \includegraphics[width=0.4\textwidth,natwidth=500,natheight=500]{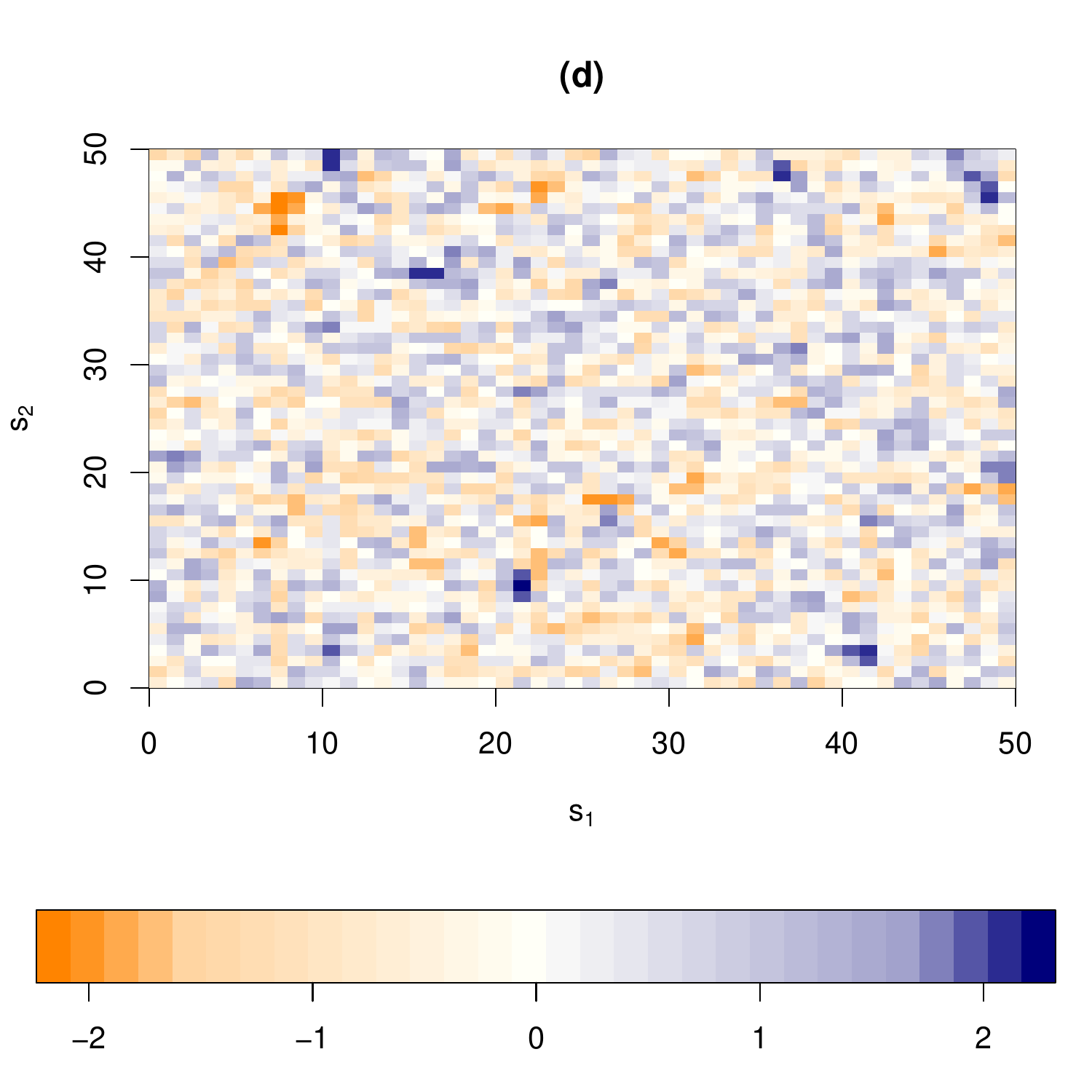}
  \end{center}
  \caption{Simulated spatial white noise process (a) truncated on $[-a,a]$, spatial autoregressive process (b), spatial ARCH process (c), spatial autoregressive process with spatial ARCH errors (d), where $d=50$, $\lambda = 0.8$, $\rho=0.5$ ($\rightsquigarrow a = 1.334$), $\mu = 0$, $\alpha_0 = 0.1$ and $\sigma_{\varepsilon}^2 = 1$.}\label{fig:simulated_SARspARCH}
\end{figure}

\begin{figure}
  \begin{center}
  \includegraphics[width=0.4\textwidth,natwidth=500,natheight=500]{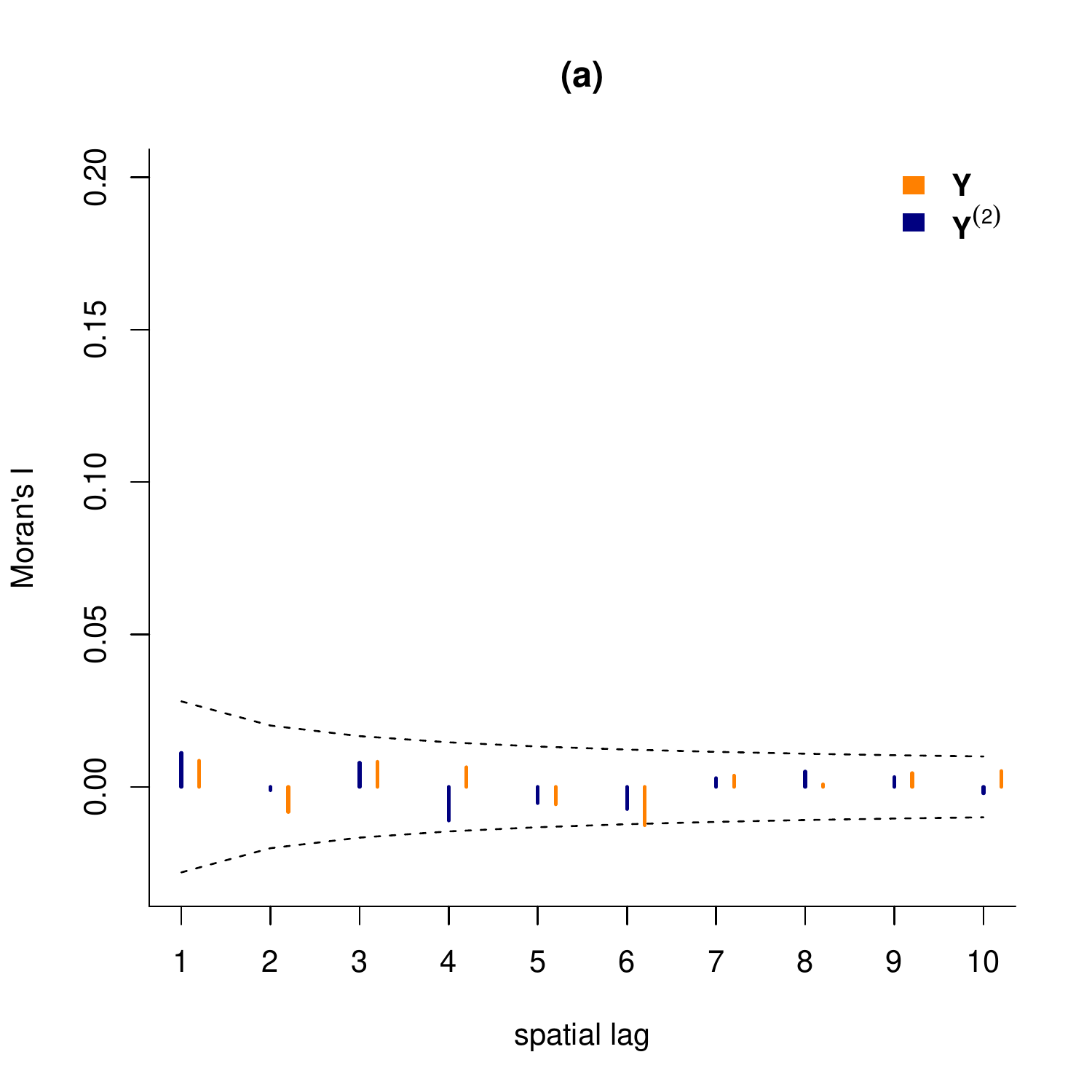}
  \includegraphics[width=0.4\textwidth,natwidth=500,natheight=500]{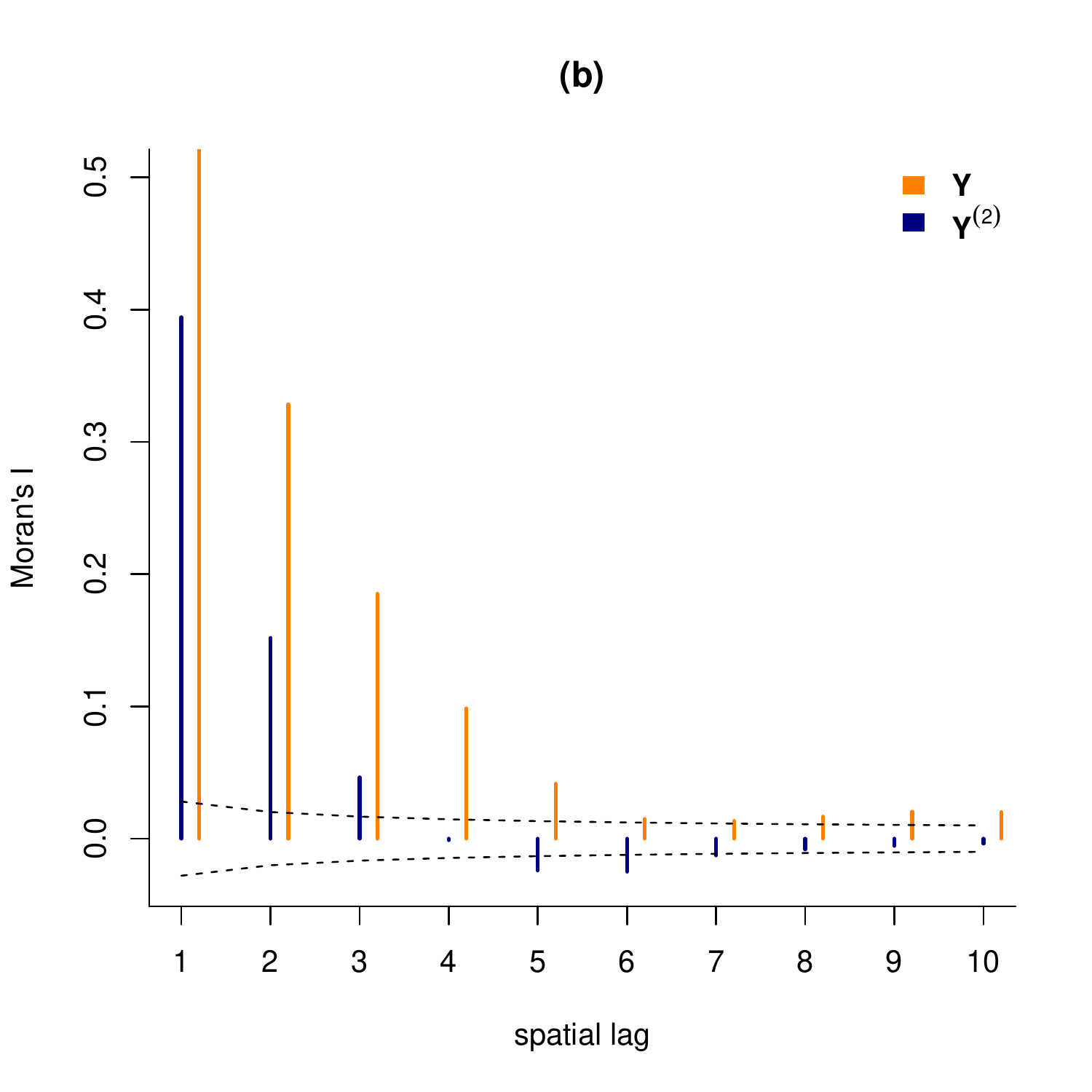}\\
  \includegraphics[width=0.4\textwidth,natwidth=500,natheight=500]{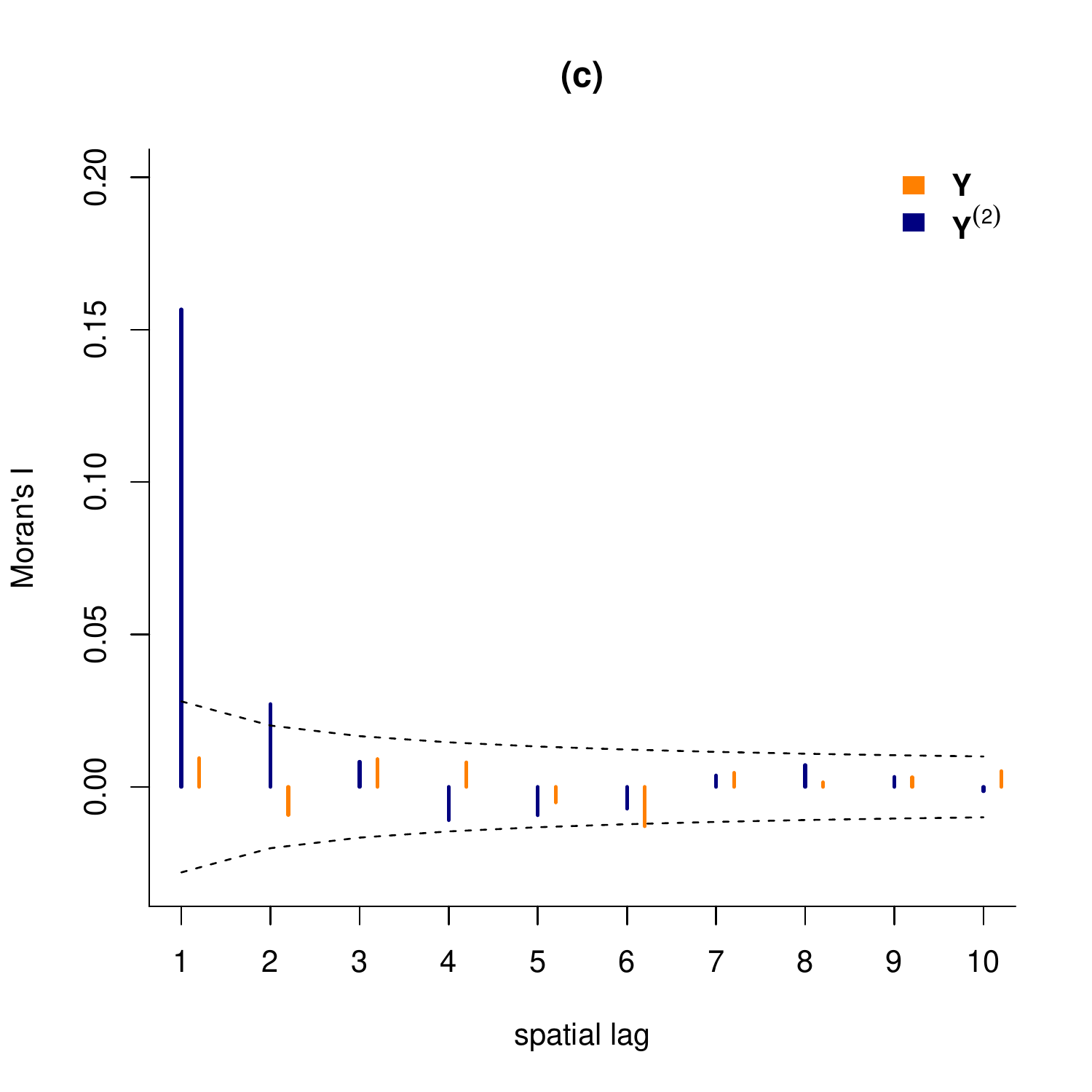}
  \includegraphics[width=0.4\textwidth,natwidth=500,natheight=500]{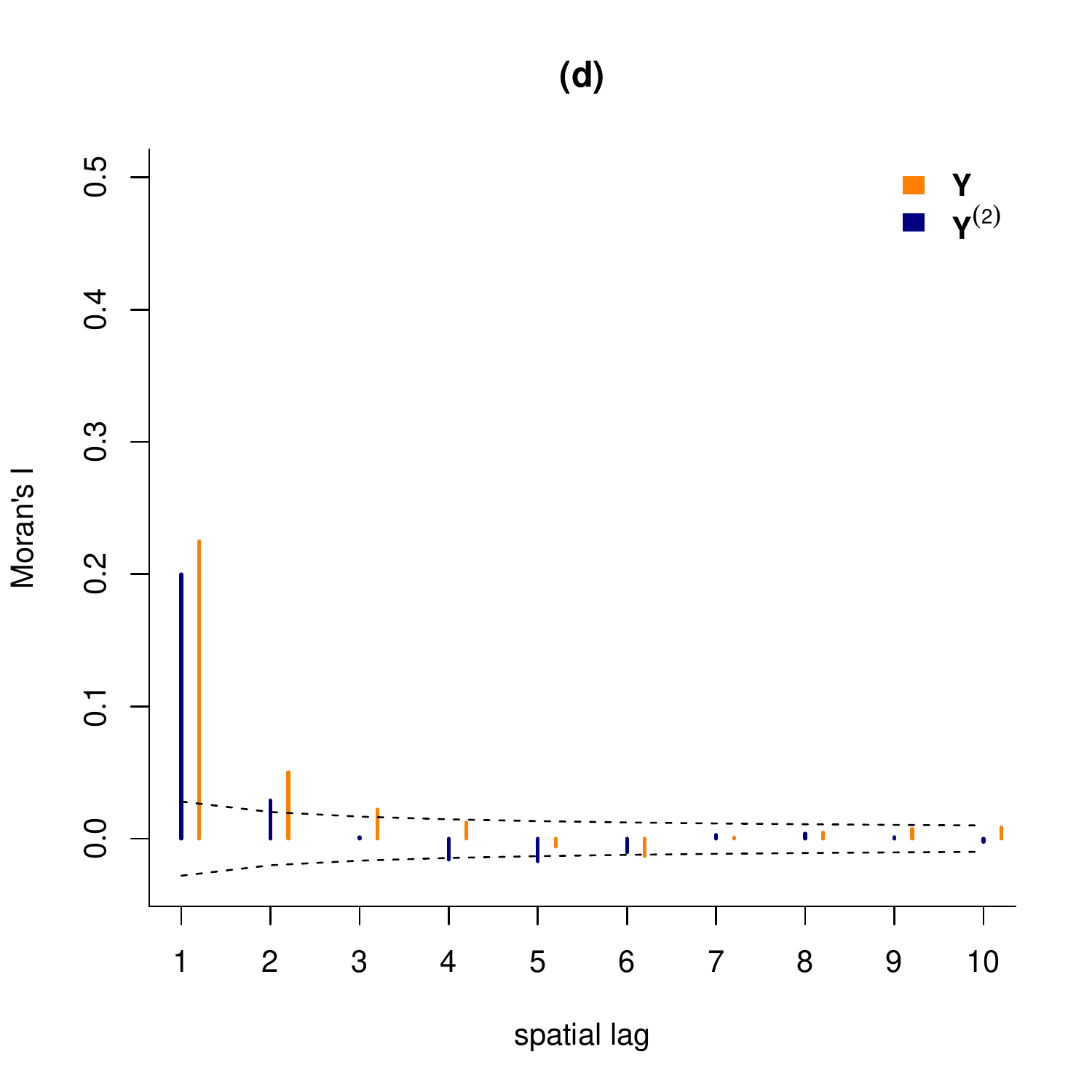}
  \end{center}
  \caption{Spatial autocorrelation function of the simulated spatial white noise process (a) truncated on $[-a,a]$, spatial autoregressive process (b), spatial ARCH process (c), spatial autoregressive process with spatial ARCH errors (d) plotted in Figure \ref{fig:simulated_SARspARCH}}\label{fig:ACF_simulated_SARspARCH}
\end{figure}

Finally, we briefly discuss the spatial autocorrelation function of the spatial ARCH process. In Figure \ref{fig:simulated_spARCH_ACF}, we plot a simulation of an oriented spatial ARCH process; i.e., the weighting matrix is triangular. Moreover, the spatial autocorrelation function is plotted for the observations $\xvec{Y}$ and the squared observations $\xvec{Y}^{(2)}$. More precisely, the autocorrelation function reports Moran's $I$ for different spatial lags; i.e., the first-order spatial lag consists of the directly neighboring locations, the second-order lag are all neighbors of these first-lag neighbors, and so forth. As expected, the observations $\xvec{Y}$ are not spatially autocorrelated, whereas the squared observations $\xvec{Y}^{(2)}$ exhibit a positive autocorrelation, which decreases with increasing order of the spatial lag.

\begin{figure}
  \begin{center}
  \includegraphics[width=0.4\textwidth,natwidth=500,natheight=500,trim = 0cm 0cm 0cm 4cm, clip = true]{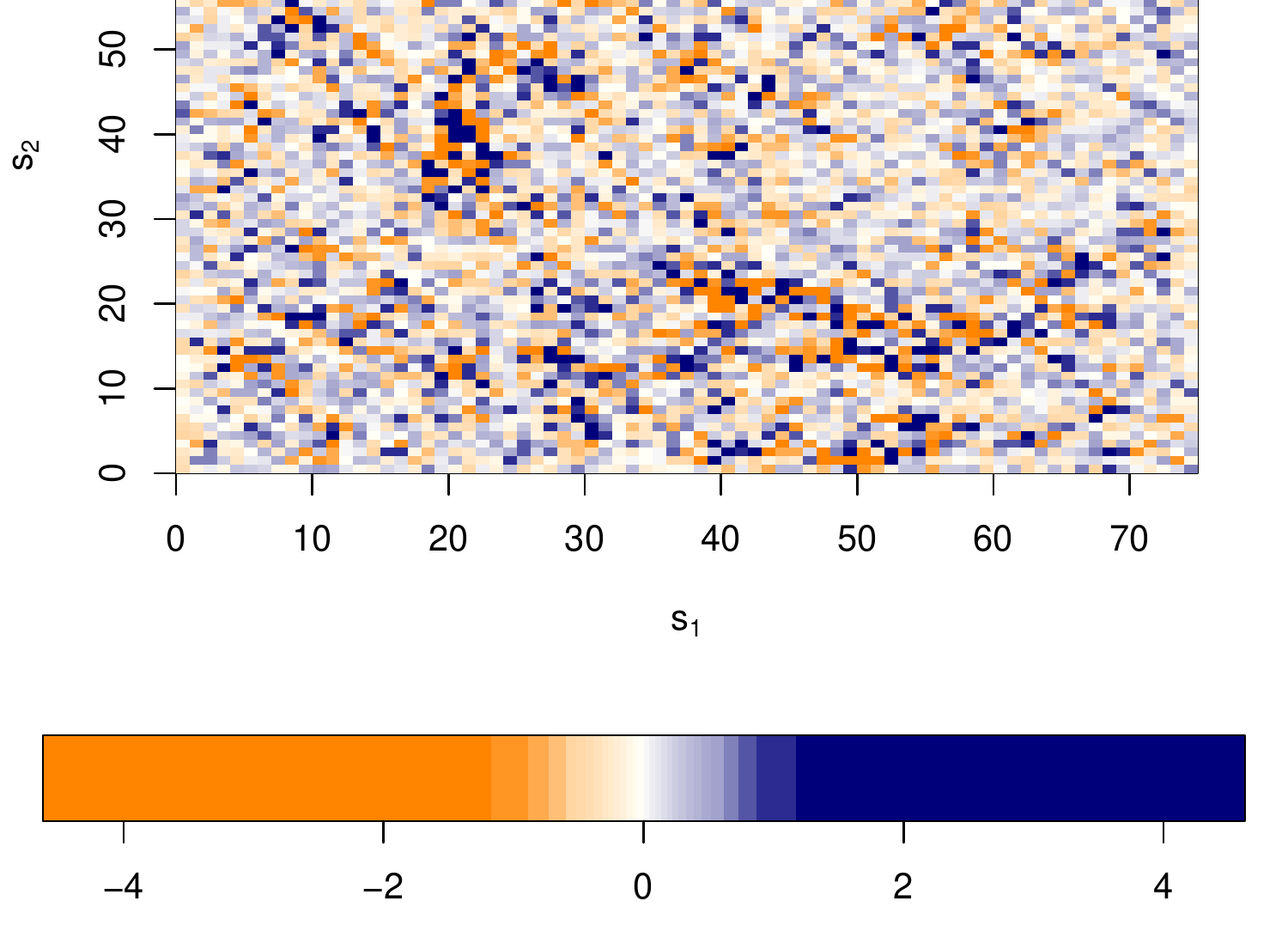}
  \includegraphics[width=0.4\textwidth,natwidth=1000,natheight=1000]{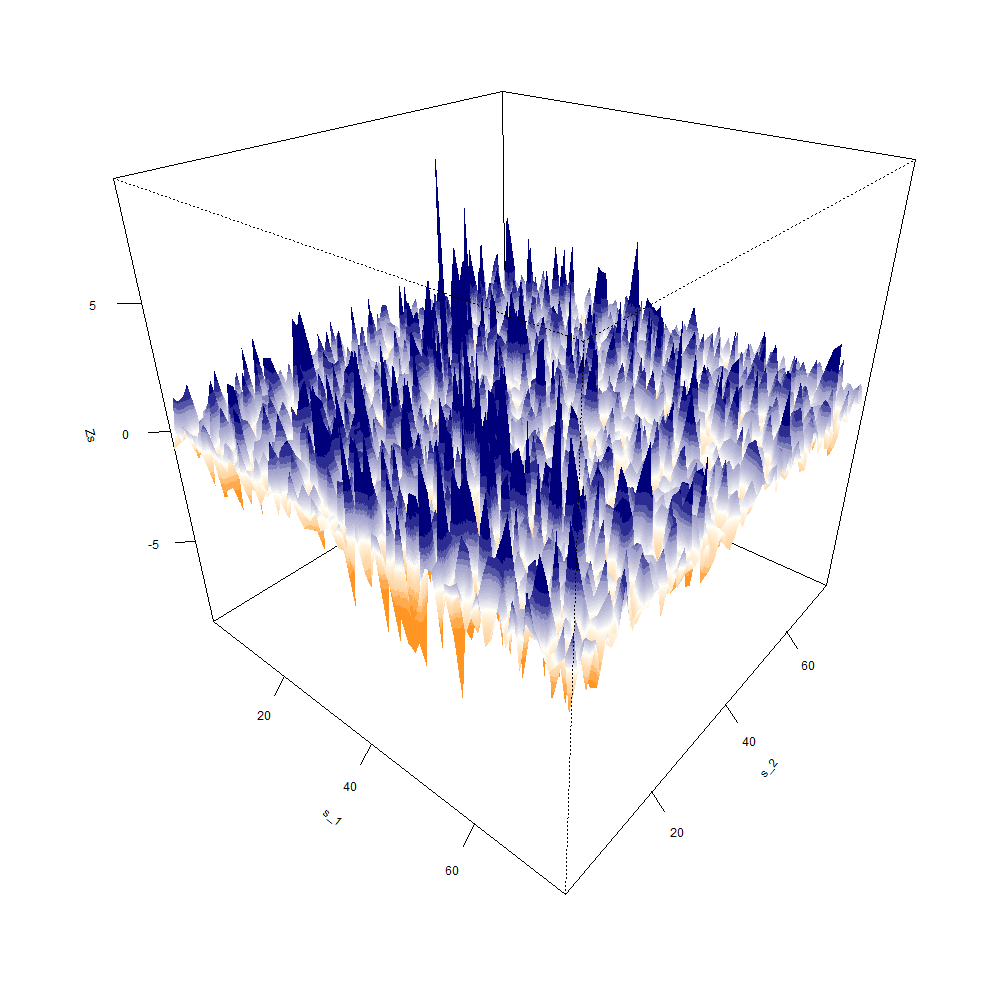} \\
  \includegraphics[width=0.55\textwidth,natwidth=500,natheight=500]{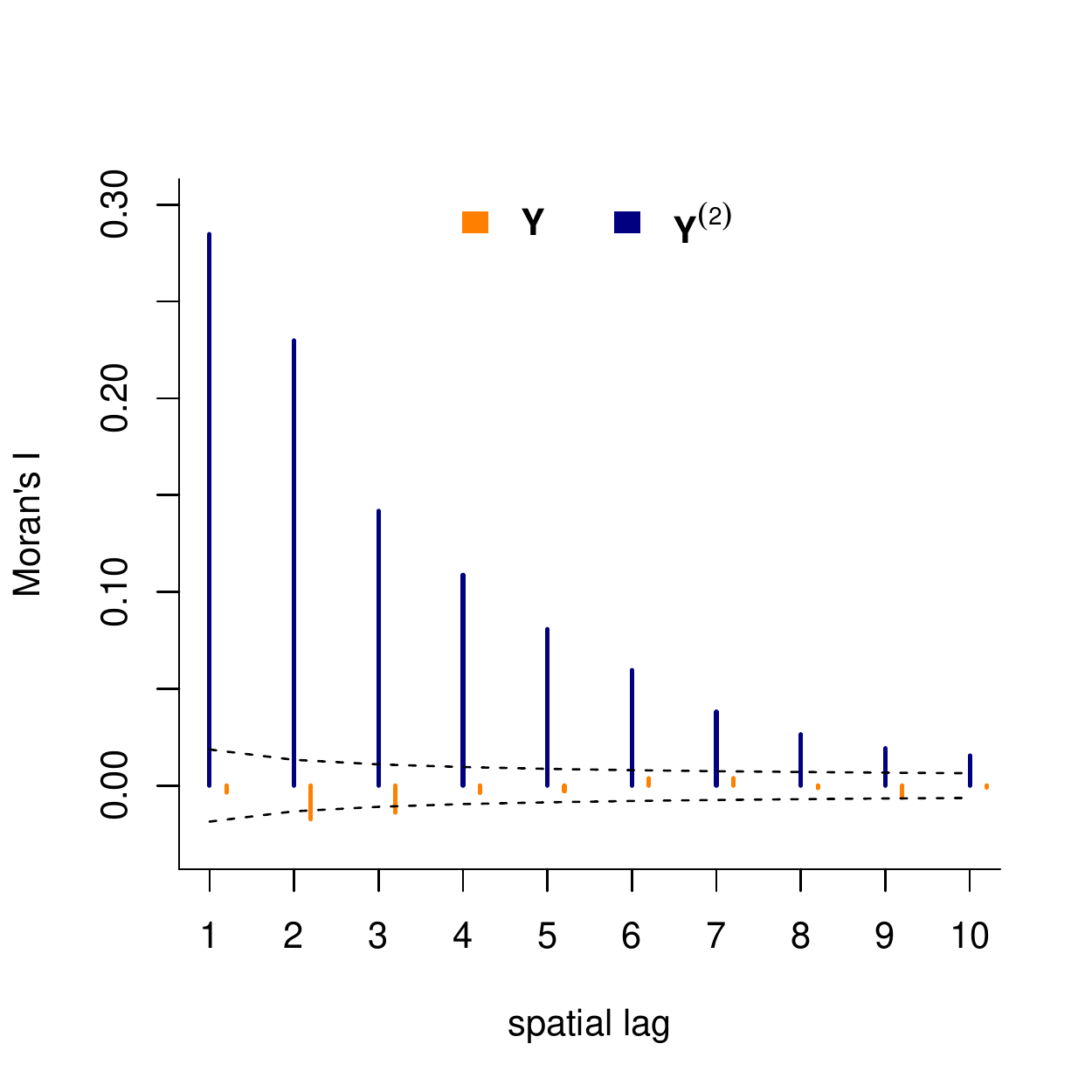}
  \end{center}
  \caption{Simulated oriented spatial ARCH process in the two- and three-dimensional view (above) and the spatial autocorrelation function of the simulated observations and squared observations (below).}\label{fig:simulated_spARCH_ACF}
\end{figure}

\subsection{Real Data Example: Cancer Mortality Rates}\label{sec:realdata}

In this section, we illustrate the proposed process using an empirical example. For this reason, we analyze the 5-year average mortality (2008--2012) caused by cancer of the lungs or bronchus provided by the Center for Disease Control and Prevention (\cite{CDC_data}). The death rates are age-adjusted to the 2000 U.S. standard population (cf. CDC (2015)). The spatial domain is all U.S. counties excluding Alaska and Hawaii, i.e., 3108 counties. Moreover, we do not distinguish in terms of race, sex, and age. In Figure \ref{fig:empirical_overview}, we show the mortality for lung cancer and the main covariates: particulate matter $\text{PM}_{2.5}$, the percentage of smokers in 2012, and the personal income per capita. In addition to these regressors, we include the amounts of nitrogen dioxide ($\text{NO}_{2}$), sulfate dioxide ($\text{SO}_{2}$), particulate matter $\text{PM}_{10}$, carbon monoxide ($\text{CO}$), and ozone ($\text{O}_{3}$) as regressors. Many studies have demonstrated that particulate matters are carcinogenic (cf., \cite{Raaschou13}, \cite{Cohen95}). Conversely, there is no association between traffic intensity, which results in a high amount of nitrogen dioxide, and the risk of lung cancer, as \cite{Raaschou13} noted.

All environmental data are annual averages (2012) recorded at the ground level by the United States Environmental Protection Agency (EPA). The measurement stations are plotted in the respective maps in Figure \ref{fig:empirical_overview}. Moreover, the data used as regressors are computed by spatial interpolation, in particular, inverse-distance-based kriging. Finally, we include covariates describing the health and economic status in each county, namely, the percentage of smokers in 2012 and the personal income per capita recorded by the CDC (Chronic Disease and Health Promotion Data \& Indicators) and the U.S. Department of Commerce, Bureau of Economic Analysis, respectively. The environmental covariates and the percentage of smokers are included in our analysis because they are the main drivers that cause a higher risk of lung cancer. Moreover, we include personal income to adjust for possible effects, such as better access to health care, early diagnosis/recognition, and screening.

\begin{figure}
  \begin{center}
  \includegraphics[width=0.4\textwidth,natwidth=1000,natheight=1000]{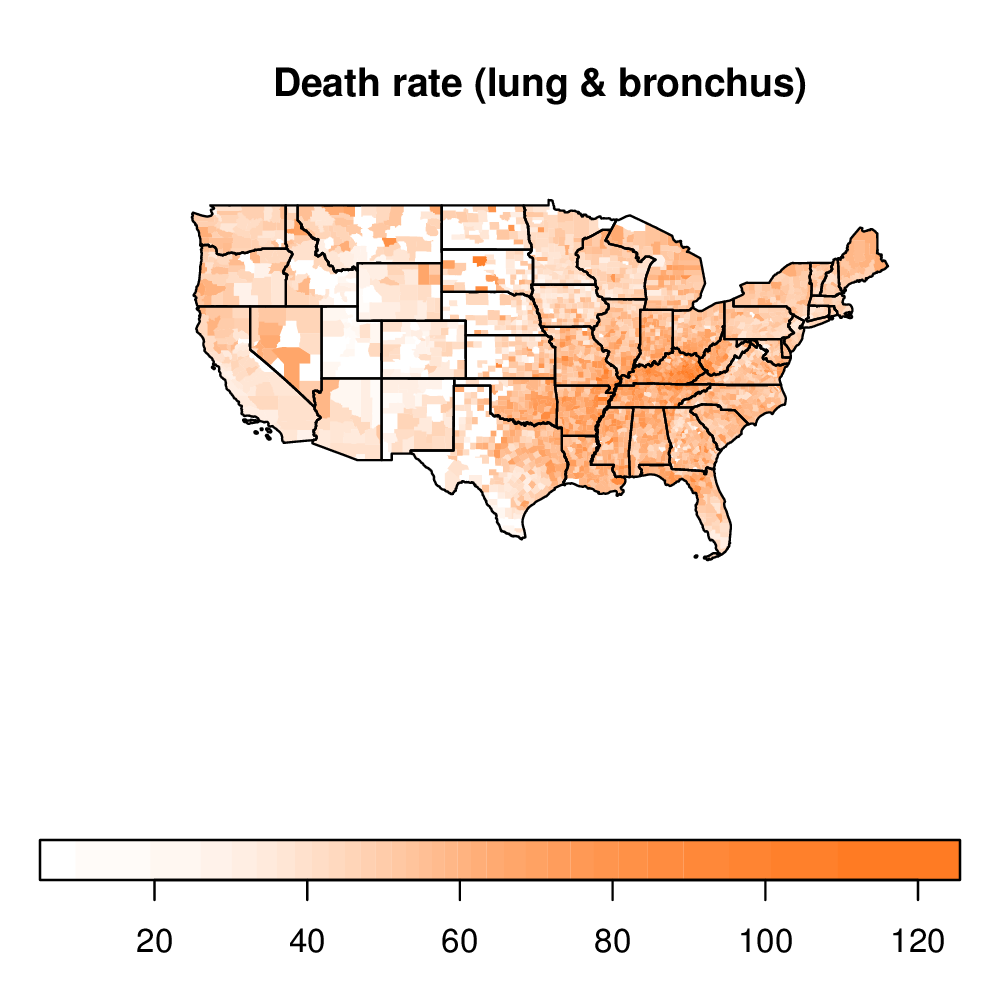}
  \includegraphics[width=0.4\textwidth,natwidth=1000,natheight=1000]{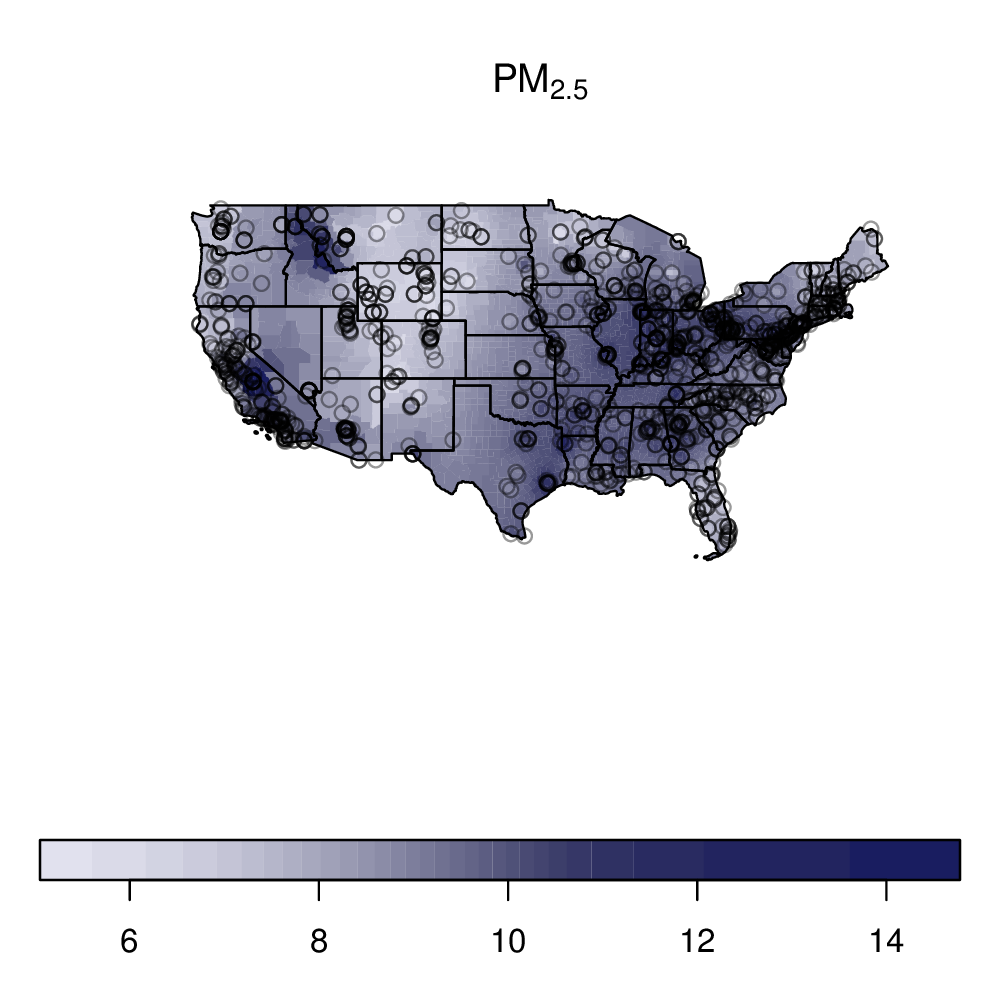}\\
  \includegraphics[width=0.4\textwidth,natwidth=1000,natheight=1000]{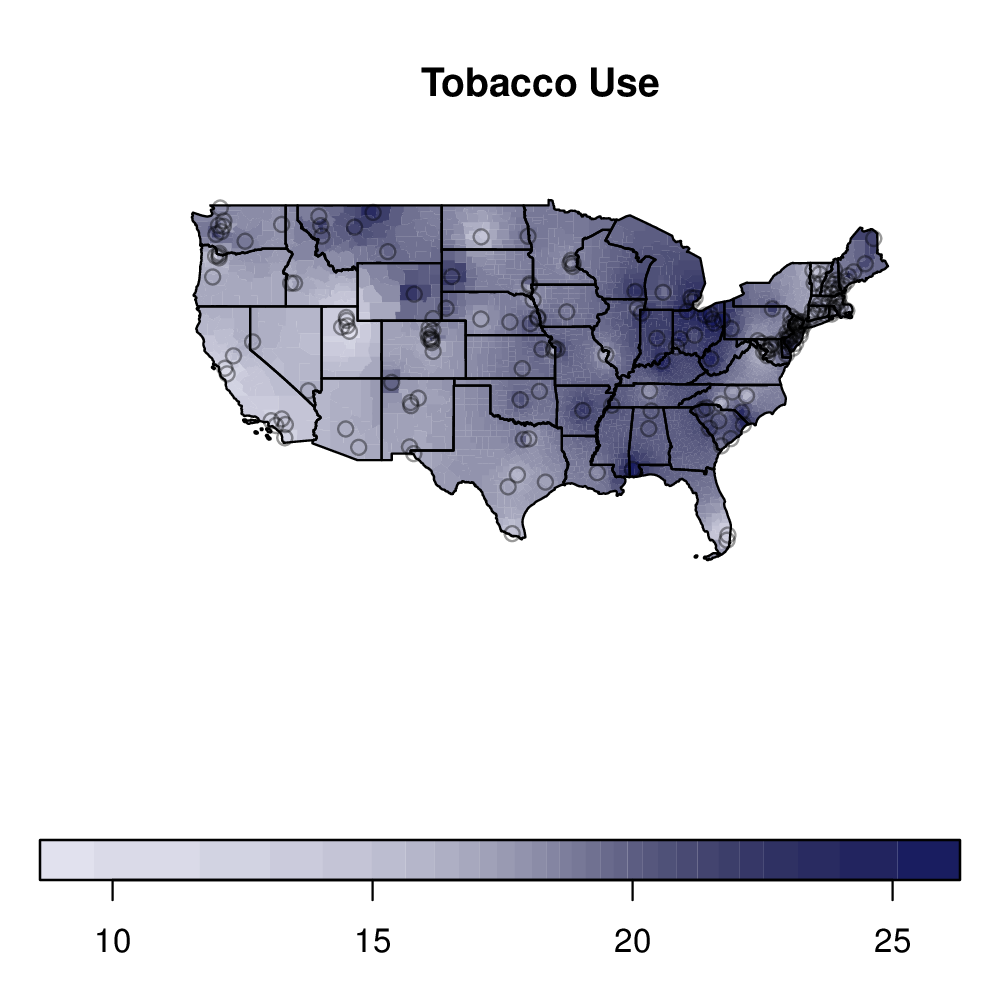}
  \includegraphics[width=0.4\textwidth,natwidth=1000,natheight=1000]{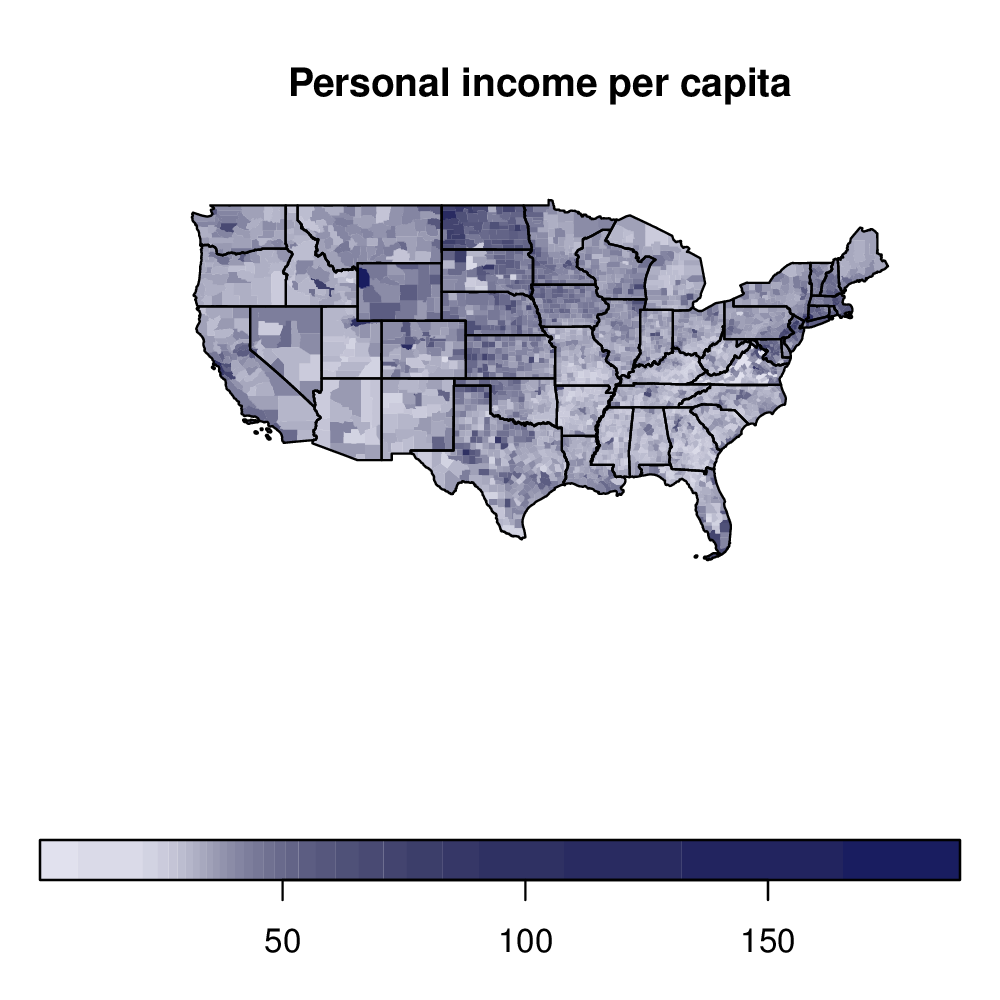}
  \end{center}
  \caption{Mortality caused by cancer of the lungs or bronchus in U.S. counties (above left) and main covariates: annual average of $\text{PM}_{2.5}$ in 2012 (above right), percentage of smokers in 2012 (below left), and personal income per capita in thousand U.S. dollars (below right). The measurement stations of the covariates $\text{PM}_{2.5}$ and the percentage of smokers are indicated on the maps via empty circles.}\label{fig:empirical_overview}
\end{figure}

The model given by \eqref{eq:SARspARCH1} and \eqref{eq:SARspARCH2} is estimated using the maximum likelihood approach. To include the covariates, the intercept $\mu\xvec{1}$ is replaced by $\xmat{X}\xvec{\beta}$, where $\xmat{X}$ is the matrix of regressors with the first column $\xvec{1}$. Because of the specific setting, we additionally incorporate two matrices of spatial weights for the autoregressive part; i.e., the model equation is given by
\begin{eqnarray*}
    \xvec{Y}   & = & \xmat{X}\xvec{\beta} + \left(\lambda_1 \xmat{B}_1 + \lambda_2 \xmat{B}_2\right) \xvec{Y} + \xvec{\xi} \\
    \xvec{\xi} & = & \text{diag}(\xvec{h})^{1/2} \xvec{\varepsilon} \qquad \text{with} \\
    \xvec{h}   & = & \alpha\xvec{1} + \rho\tilde{\xmat{W}}  \, \text{diag}(\xvec{\xi})\xvec{\xi} \, .\label{eq:SARspARCH_empirical}
\end{eqnarray*}
In particular, we estimate the spatial autoregressive part of the model using the well-known quasi-maximum-likelihood estimator with Gaussian errors $\xvec{\xi}$ (e.g., \citealt{Lee04}). Moreover, the spARCH parameters are included in the logarithmic likelihood function (cf. Section \ref{sec:inference}); i.e., all parameters are estimated in one step. The spatial weighting matrix of the spARCH process is chosen to be a non-triangular matrix $\xmat{W} = \rho \tilde{\xmat{W}}$. In particular, $\tilde{\xmat{W}}$ is defined as row-standardized Queen's contiguity matrix for all spatial lags up to order 5; i.e.,
\begin{equation*}
\tilde{\xmat{W}} = \text{diag}\left(\left(\sum_{k=1}^{5} \xmat{B}_k\right)\xvec{1}_n\right)^{-1} \left( \sum_{k=1}^{5} \xmat{B}_k \right)
\end{equation*}
with $\xmat{B}_k$ denoting the row-standardized binary contiguity matrix of the $k$-th-lag neighbors.
Thus, the weighting matrix $\xmat{B}_1$ is a classical row-standardized Queen's contiguity matrix of the first-lag neighbors, and matrix $\xmat{B}_2$ is the row-standardized contiguity matrix of the second-lag neighbors.

In Table \ref{table:empirical1}, we summarize the results of three models: a simple linear regression model, the SAR model, and the SAR\-sp\-ARCH model. Moreover, we report Moran's $I$ statistics and the $p$-values for testing the null hypothesis of the absence of spatial autocorrelation. All variables are log-transformed; thus, the estimates must be interpreted in elasticity terms, keeping the positive spatial correlation in mind (cf. \citealt{Lesage08}). We selected the regressors by minimizing the Akaike information criterion.
It is unsurprising that the covariate describing the behavioral aspect, namely, the percentage of smokers, has a large, positive impact on the mortality caused by lung cancer. Moreover, we observe only positive effects of the amount of nitrogen dioxide and $\text{PM}_{2.5}$ regarding the environmental covariates. However, it is important to distinguish between cancer incidence and cancer mortality. Hence, it is not surprising that we found different effects in terms of cancer mortality compared with the results of \cite{Raaschou13} and \cite{Cohen95}.

In all, it is interesting to compare the results of the linear regression model and the models that account for spatial dependence. All estimated parameters of the regression model are larger in absolute values than the estimated coefficients of the SAR model. For the SAR\-sp\-ARCH model, the coefficients are again smaller in absolute terms (e.g., percentage of smokers and all environmental effects), and several coefficients are omitted due to the Akaike information criterion (e.g., nitrogen dioxide, ozone). Hence, the spatial autocorrelation of the residual's variance also affects the results of the estimated coefficients and, therefore, the interpretation of the impact of the regressors. Thus, it would be interesting to analyze the impact of spatial heteroscedasticity on the estimated coefficients of an SAR model in more detail in future studies.

Moreover, the spatial autocorrelation of the residuals and the squared residuals are worth noting. Whereas Moran's $I$ of the residuals does not differ significantly from zero for both the SAR and SARspARCH models, the squared residuals are positively correlated for the SAR model. Consequently, the residual's variance exhibits spatial clusters, and the residuals cannot result from a spatial white noise process. However, by applying the proposed spARCH model to the residuals, it is possible to remove the spatial autocorrelation of the squared residuals. For the SARspARCH model, neither the residuals nor the squared residuals are correlated. 

\begin{landscape}
\begin{table}
  \caption{Estimated coefficients and summary statistics of a simple regression model as a benchmark and of the SAR and SARspARCH models for the mortality caused by lung cancer.}\label{table:empirical1}
  \begin{center}
\renewcommand{\baselinestretch}{1.0}
\large \normalsize
\begin{scriptsize}
  \begin{tabular}{l ccc c ccc c ccc}
  \hline
  \hline
  & \multicolumn{3}{c}{Linear Regression} && \multicolumn{3}{c}{SAR} && \multicolumn{3}{c}{SARspARCH} \\
  & Estimate & Standard Error & $p$-Value && Estimate & Standard Error & $p$-Value && Estimate & Standard Error & $p$-Value \\
  \hline
  $\quad$Intercept                               &-10.8575 & 0.9964 & 0.0000 && -4.3157 & 0.9347 & 0.0000 && -0.0059 & 0.1629 & 0.9712 \\
  \emph{Environmental}                           &         &        &        &&         &        &        &&         &        &        \\
  \emph{Covariates}                              &         &        &        &&         &        &        &&         &        &        \\
  $\quad$$\text{PM}_{10}$                        & -1.1734 & 0.0874 & 0.0000 && -0.3584 & 0.0844 & 0.0000 && -0.2641 & 0.0397 & 0.0000 \\
  $\quad$$\text{PM}_{2.5}$                       &  2.1193 & 0.1427 & 0.0000 &&  0.6162 & 0.1402 & 0.0000 &&  0.5365 & 0.0632 & 0.0000 \\
  $\quad$$\text{SO}_{2}$                         &  0.1210 & 0.0422 & 0.0042 &&  -      & -      & -      &&  -      & -      & -      \\
  $\quad$$\text{NO}_{2}$                         &  0.6217 & 0.0799 & 0.0000 &&  0.2731 & 0.0732 & 0.0002 &&  -      & -      & -      \\
  $\quad$$\text{O}_{3}$                          & -2.4133 & 0.2489 & 0.0000 && -0.8082 & 0.2251 & 0.0003 &&  -      & -      & -      \\
  $\quad$$\text{CO}$                             & -0.4041 & 0.1151 & 0.0005 && -0.1759 & 0.1026 & 0.0863 &&  -      & -      & -      \\
  \emph{Behavioral Covariates}                   &         &        &        &&         &        &        &&         &        &        \\
  $\quad$Tobacco Use                             &  1.2859 & 0.1677 & 0.0000 &&  0.6090 & 0.1381 & 0.0000 &&  0.3188 & 0.0665 & 0.0000 \\
  \emph{Economic Covariates}                     &         &        &        &&         &        &        &&         &        &        \\
  $\quad$Personal Income                         &  -      & -      & -      && -       & -      & -      &&  -      & -      & -      \\
  \emph{Spatial Coefficients}                    &         &        &        &&         &        &        &&         &        &        \\
  $\quad$$\lambda_1$                             &         &        &        &&  0.2449 & 0.0262 & 0.0000 &&  0.2624 & 0.0278 & 0.0000 \\
  $\quad$$\lambda_2$                             &         &        &        &&  0.4431 & 0.0327 & 0.0000 &&  0.3888 & 0.0400 & 0.0000 \\
  $\quad$$\sigma_{\xvec{\xi}}^2$                 &         &        &        &&  0.4628 & 0.0119 & 0.0000 &&         &        &        \\
  $\quad$$\alpha$                                &         &        &        &&         &        &        &&  0.0601 & 0.0015 & 0.0000 \\
  $\quad$$\rho$                                  &         &        &        &&         &        &        &&  0.6680 & 0.0161 & 0.0000 \\
  \emph{Summary Statistics}                      &         &        &        &&         &        &        &&         &        &        \\
  $\quad$Moran's $I$ $\xvec{\xi}$                &  0.2203 & 0.0106 & 0.0000 && -0.0114 & 0.0106 & 0.2966 &&         &        &        \\
  $\quad$Moran's $I$ $\xvec{\xi}^{(2)}$          &  0.3331 & 0.0106 & 0.0000 &&  0.3212 & 0.0106 & 0.0000 &&         &        &        \\
  $\quad$Moran's $I$ $\xvec{\varepsilon}$        &         &        &        &&         &        &        &&  0.0075 & 0.0106 & 0.4565 \\
  $\quad$Moran's $I$ $\xvec{\varepsilon}^{(2)}$  &         &        &        &&         &        &        &&  0.0067 & 0.0106 & 0.4852 \\
  $\quad$AIC                                     &         & 7091.062 &      &&         & 6560.509 &      &&         & 2484.686 &      \\
  \hline
  \end{tabular}
\end{scriptsize}
\renewcommand{\baselinestretch}{1.4}
\large \normalsize
  \end{center}
\end{table}
\end{landscape}

%
%
%
%
%
%


\section{Simulation Studies}\label{sec:MC}

The following section focuses on insights that we gained via extensive Monte Carlo simulation studies. Initially, we analyze the impact of the bounded support of the error distribution for the case of a non-triangular weighting matrix. Furthermore, we demonstrate how the parameters of the suggested spatial ARCH model can be estimated and illustrate the behavior of the estimators for finite samples. 

For all Monte Carlo simulations, we simulated the process as a two-dimensional lattice process; i.e.,
$D_{\xvec{s}} = \{\xvec{s} = (s_1, s_2)' \in \xset{Z}^2 : 0 \leq s_1,s_2 \leq d \}$. Hence, the number of observations $n$ is equal to $d^2$. Moreover, all simulations are performed for $10^5$ replications.

For the first simulation study, we use a common row-standardized Rook contiguity matrix. Consequently, the weighting matrix $\tilde{\xmat{W}}$ is set equal to the row-standardized Rook contiguity matrix $\xmat{R}_1 = (r_{1,ij})_{i,j = 1, \ldots, n}$, with
\begin{equation*}
r_{1,ij} = \left\{
\begin{array}{cc}
1 & \text{if} \qquad ||\xvec{s}_i - \xvec{s}_j||_{1} = 1\\
0 & \text{otherwise}
\end{array} \right. \, .
\end{equation*}
Furthermore, we include the parameter $\rho$ such that the weighting matrix is given by $\xmat{W} = \rho \tilde{\xmat{W}}$. Hence, the matrix $\xmat{W}$ is not triangular; thus, the support of the error distribution must be compact. Therefore, the residuals are simulated from a standard normal distribution truncated on the interval $[-a,a]$. The parameter $\xvec{\alpha}$ is chosen to be $5\cdot\xvec{1}_n$. Eventually, we simulated the process for different values of $\rho$ and calculated Moran's $I$ statistic of the squared observations to measure the extent of the spatial autocorrelation of the conditional variance (cf. \citealt{Moran50}).

In Figure \ref{fig:MoransI}, we plot Moran's $I$ and the resulting asymptotic 95\% confidence intervals of $I$ for different values of $\rho$. Obviously, the support does not have to be constrained regarding $\rho = 0$. However, this support decreases with increasing values of $\rho$. If $\rho = 1$, the parameter $a$ is equal to $0.968$. Moreover, we observe that the growth rate of $I$ decreases with increasing spatial weight. This trend can be explained by the compact support of the residuals. Because there cannot be large innovations $\varepsilon(\xvec{s}_i)$ in absolute terms, there also cannot occur large spatial clusters of high or low variance.

\begin{figure}
  \begin{center}
  \includegraphics[width=0.6\textwidth,natwidth=500,natheight=500]{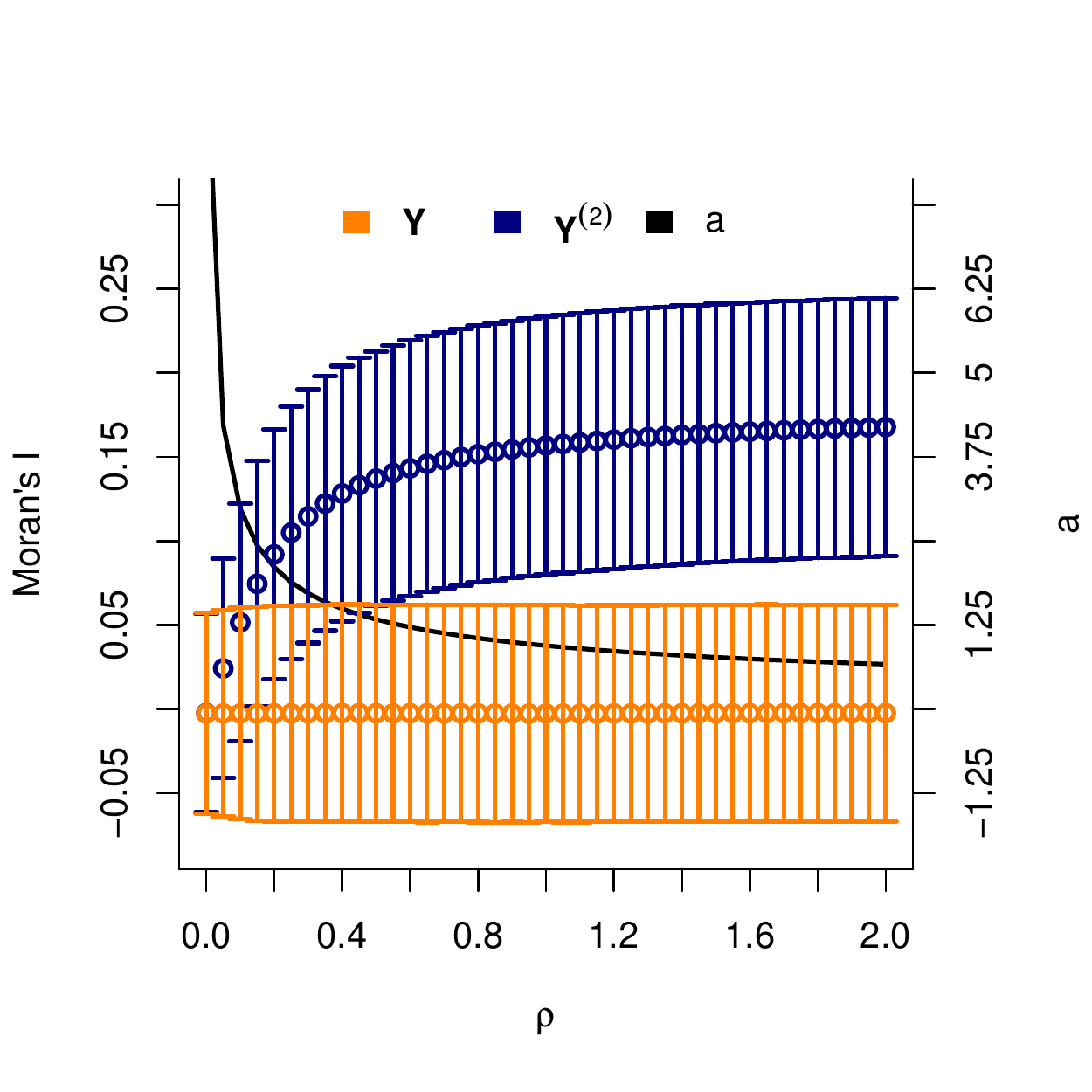}
  \end{center}
  \caption{Moran's $I$ of the observations $\xvec{Y}$ and the squared observations $\xvec{Y}^{(2)}$, including the asymptotic 95\% confidence intervals of $I$ for $\rho \in \{0,0.05, \ldots, 2\}$. Moreover, the resulting bound $a$ is plotted as a bold, black line.}\label{fig:MoransI} 
\end{figure}

Furthermore, we analyzed the performance of the proposed maximum-likelihood estimator in detail. For this simulation study, an oriented spatial ARCH process is considered; i.e., the weighting matrix $\xmat{W}$ is strictly triangular. We again utilize the matrix $\tilde{\xmat{W}}$, which results in a row-standardized binary weighting matrix $\xmat{R}_2 = (r_{2,ij})_{i,j = 1, \ldots, n}$ with
\begin{equation*}
r_{2,ij} = \left\{
\begin{array}{cc}
1 & \text{if} \qquad ||\xvec{s}_i - \xvec{s}_j||_{2} \leq \sqrt{2} \wedge  ||\xvec{s}_i - \xvec{s}_0||_{2} < ||\xvec{s}_j - \xvec{s}_0||_{2} \\
0 & \text{otherwise}
\end{array} \right.
\end{equation*}
and $\xvec{s}_0 = \left(\lfloor\frac{d}{2}\rfloor, \lfloor\frac{d}{2}\rfloor\right)'$. Consequently, any location $\xvec{s}_i$ is influenced by locations that lie within a distance of $\sqrt{2}$ from $\xvec{s}_i$ and that are closer to the origin $\xvec{s}_0$. The central location $\xvec{s}_0$ is chosen to be in the middle of the two-dimensional lattice $D_{\xvec{s}}$.

To evaluate the performance of the estimators, we consider the simple spARCH(1) model with
\begin{eqnarray*}
\xvec{Y} & = & \text{diag}(\xvec{h})^{1/2} \xvec{\varepsilon} \\
\xvec{h} & = & \alpha\xvec{1}_n + \rho\tilde{\xmat{W}}  \, \text{diag}(\xvec{Y})\xvec{Y} \, .
\end{eqnarray*}
In Figure \ref{fig:MC_spARCH}, the performance of the estimators for both parameters $\alpha$ and $\rho$ is visualized using kernel density estimates. The process was simulated for $d \in \{10, 20, 50\}$ with $10^5$ replications. Moreover, we considered all combinations of the true parameters $\alpha \in \{0.5, 1, 2, 5\}$ and $\rho \in \{0, 0.2, 0.6, 0.9\}$; i.e., the simulation study was performed for $48$ settings.

Regarding the first plot (Ia) in Figure \ref{fig:MC_spARCH}, one might see that the true parameter of $\alpha$ is slightly underestimated if $\rho = 0$, although the density of the estimated $\hat{\alpha}$ is sharper. For increasing values of $\rho$, the densities become less sharp, but the estimates are unbiased. Furthermore, we analyzed the performance of the estimators for an increasing number of observations and fixed $\rho = 0.2$ (see plot (IIb)). Interestingly, the smaller values of $\alpha$ are estimated more precisely than larger values of $\alpha$. However, all estimators seem to be unbiased and consistent. For the estimator of $\rho$, the performance does not depend on the magnitude of the spatial autocorrelation in the variance (see plot (IIb)). All density estimates are equally shaped. However, one might observe that the estimator works poorly if the number of observations and the parameter $\rho$ are small. For $\rho = 0.2$ and $d = 10$, $\hat{\rho}$ is more often close to zero than to the correct value of $0.2$. If either $\rho$ or $d$ is increasing, the bias vanishes. In all settings, the absence of dependence in the variance, i.e., $\rho = 0$, is estimated better than the presence of spatial clusters in the variance, i.e., $\rho > 0$. Moreover, the estimation of $\rho$ is independent of the coefficient $\alpha$. Please note that the curves in plot (IIa) are identical because the random seed was set to the number of replicates for each setting; i.e., the innovations $\xvec{\varepsilon}$ are identical for each setting.

\begin{figure}
  \begin{center}
\begin{tabular}
{>{\centering\arraybackslash}m{0.5cm}  >{\centering\arraybackslash}m{0.45\textwidth} >{\centering\arraybackslash}m{0.45\textwidth}}
& (I) & (II) \\
(a) & \includegraphics[width=0.45\textwidth,natwidth=500,natheight=500]{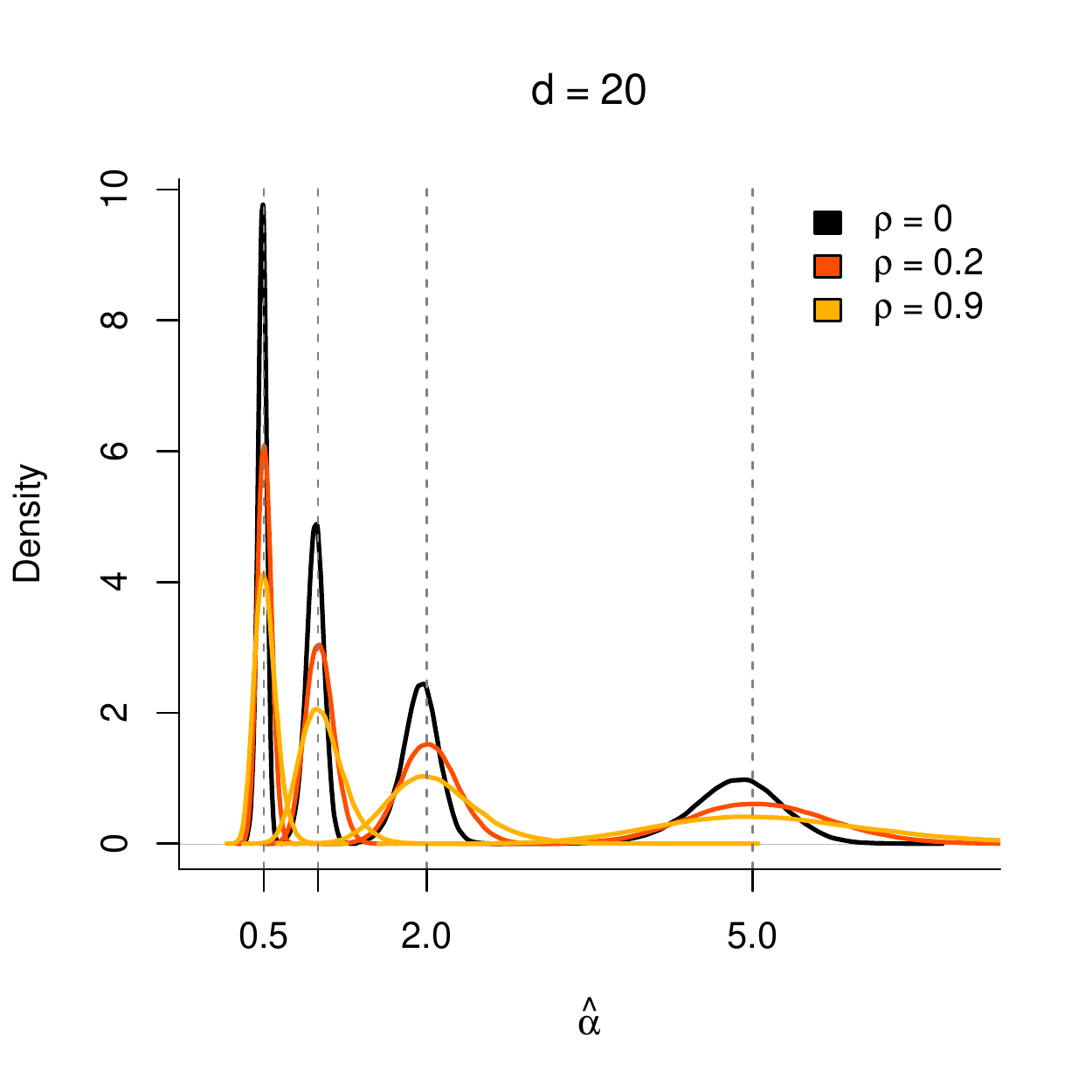} &
      \includegraphics[width=0.45\textwidth,natwidth=500,natheight=500]{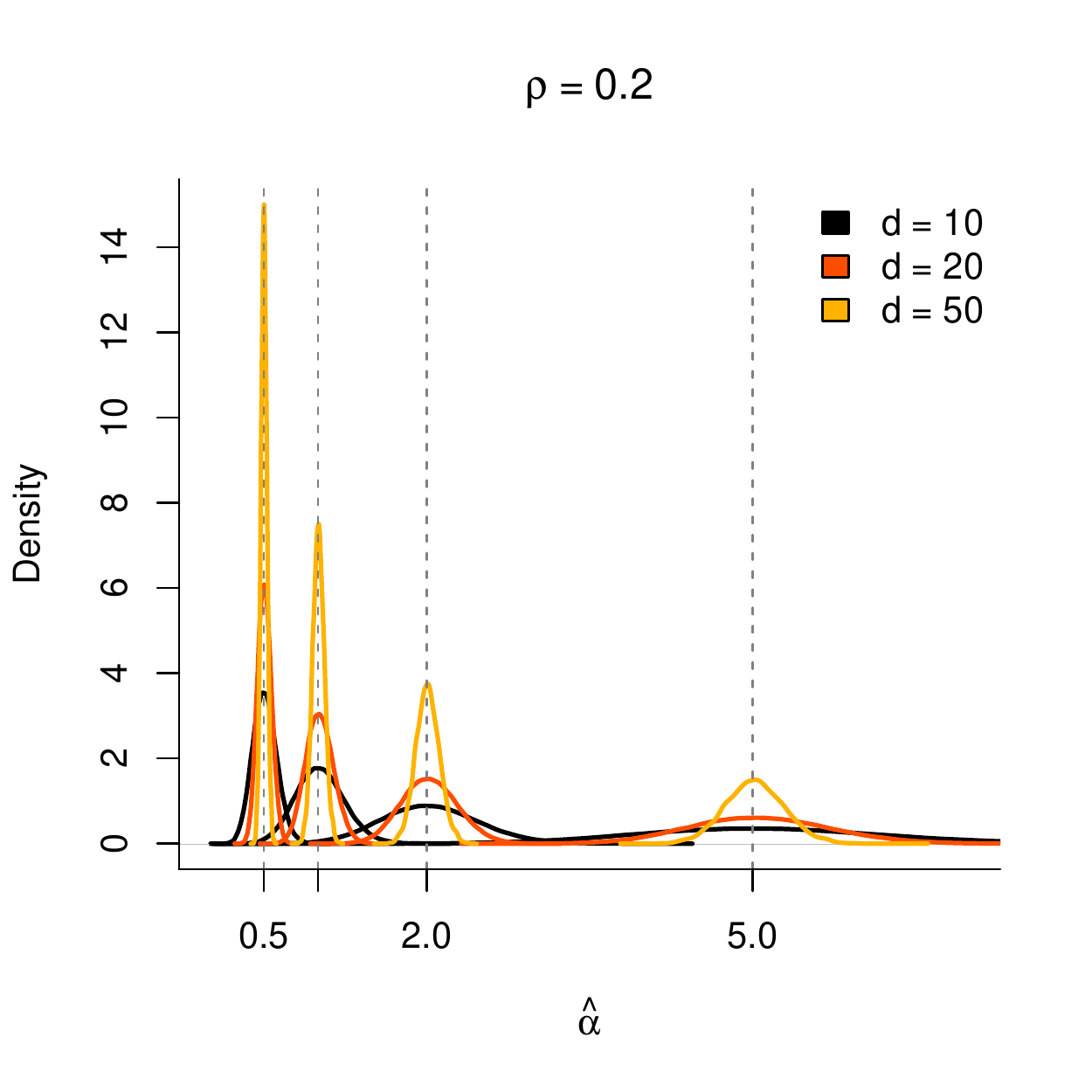} \\
(b) & \includegraphics[width=0.45\textwidth,natwidth=500,natheight=500]{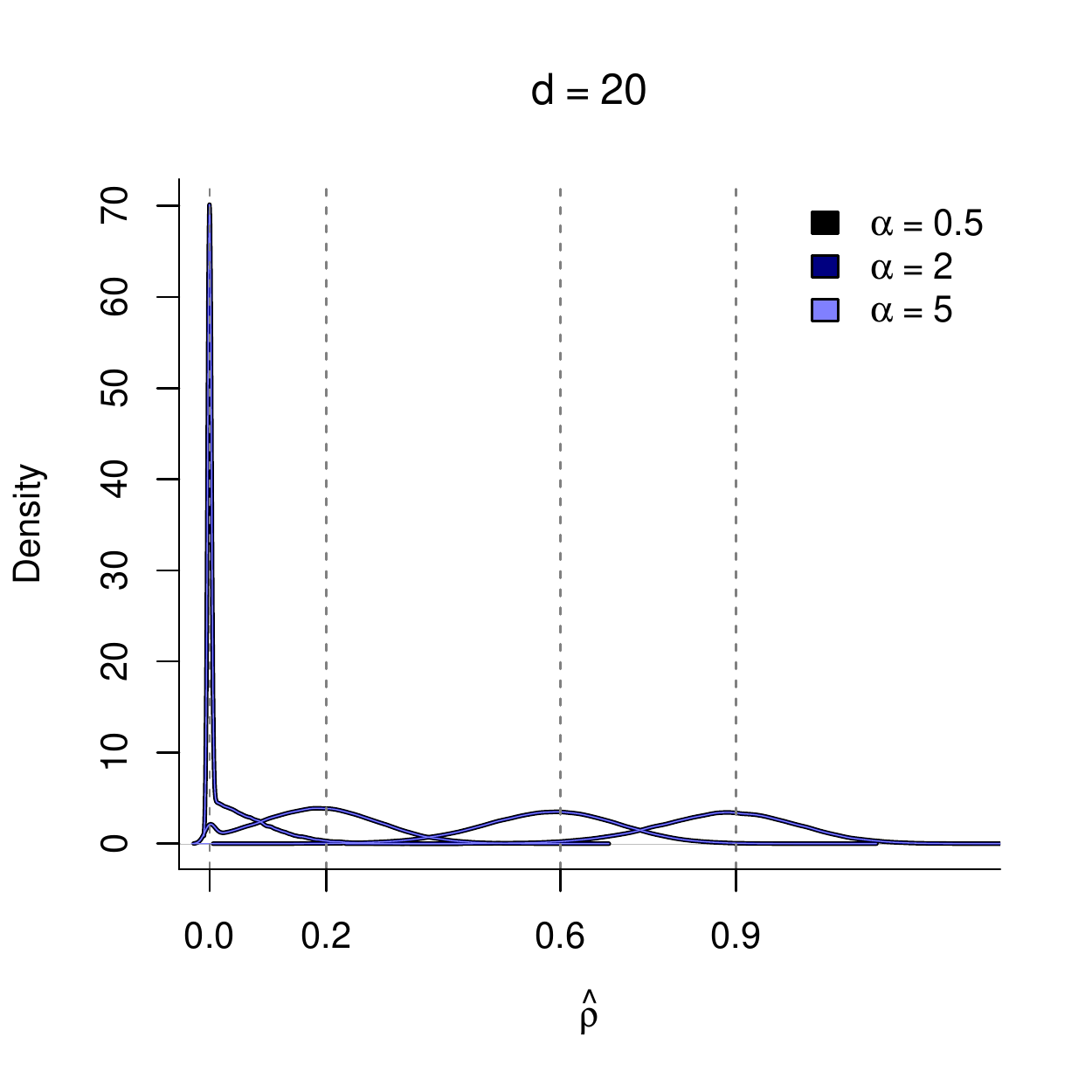} &
      \includegraphics[width=0.45\textwidth,natwidth=500,natheight=500]{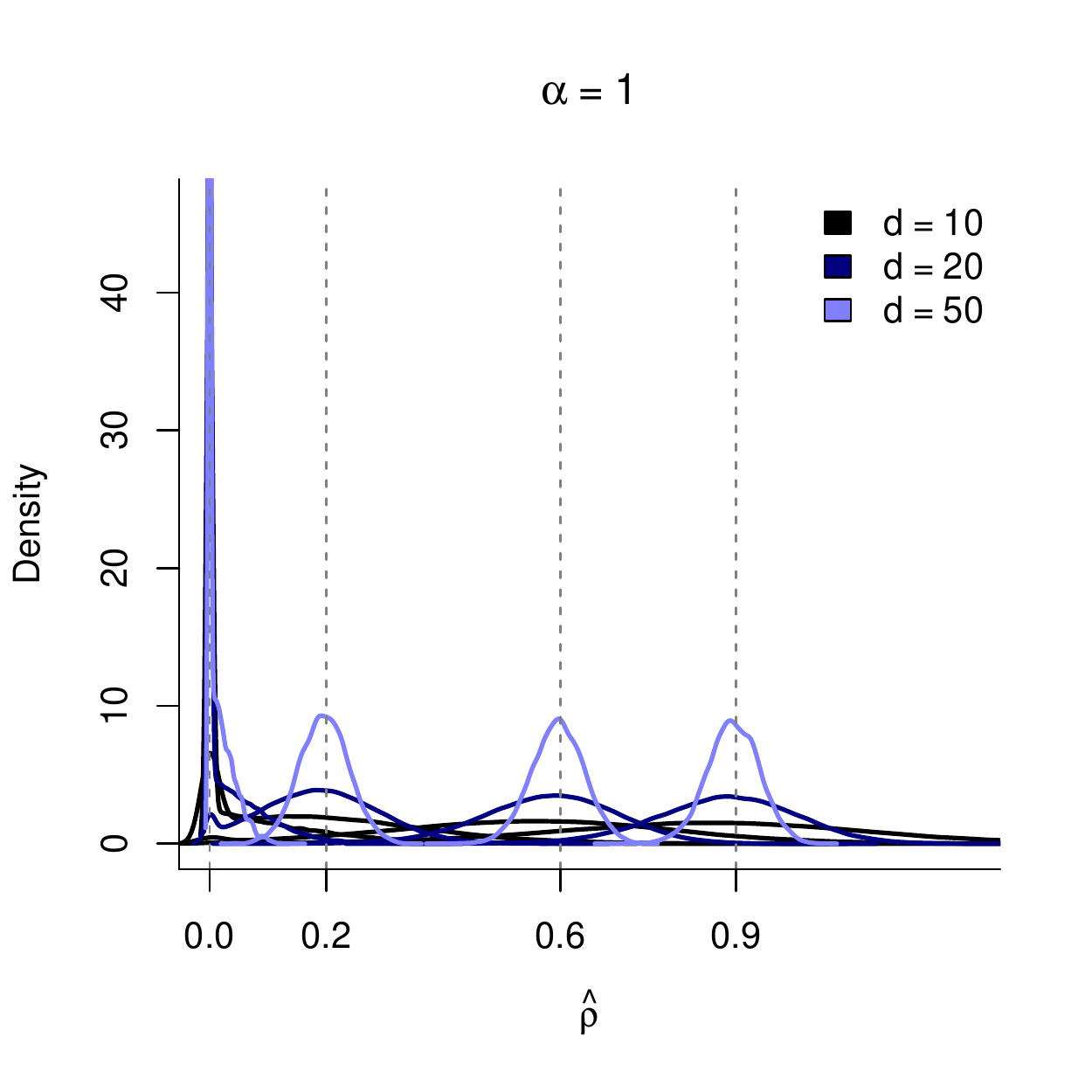}
\end{tabular}
  \end{center}
  \caption{Kernel density estimates of (a) $\hat\alpha$ and (b) $\hat\rho$ for (I) a constant number of observations ($d=20$) and (II) an increasing number of observations. The true values of the parameters are  $\alpha \in \{0.5, 1, 2, 5\}$ and $\rho \in \{0, 0.2, 0.6, 0.9\}$ for the different settings.}\label{fig:MC_spARCH}
\end{figure}

\section{Discussion}\label{sec:conclusion}

Finally, we discuss possible extensions of the model in this section and conclude the paper by summarizing the main findings. One possible extension of the proposed spARCH process would be to consider a generalized version analogous to the GARCH process introduced by \cite{Bollerslev86}. For the spatial ARCH process, we defined the conditional spatial variance by \eqref{eq:spARCH}; i.e.,
\begin{equation*}
\xvec{h} = \xvec{\alpha} + \xmat{W}_1  \, \text{diag}(\xvec{Y})\xvec{Y} \, .
\end{equation*}
Adding a weighting matrix $\xmat{W}_2$ for $\xvec{h}$ leads to
\begin{equation*}
\xvec{h} = \xvec{\alpha} + \xmat{W}_1  \, \text{diag}(\xvec{Y})\xvec{Y} + \xmat{W}_2 \xvec{h} \,,
\end{equation*}
which is equivalent to
\begin{equation*}
\xvec{h} = (\xmat{I} - \xmat{W}_2)^{-1} \left(\xvec{\alpha} + \xmat{W}_1  \, \text{diag}(\xvec{Y})\xvec{Y}\right) \, .
\end{equation*}
The weighting matrix $\xmat{W}_2$ consists of the weights for the spatial moving average part, and it can be chosen analogous to matrix $\xmat{W}_1$. Surely, the matrix must be non-stochastic with zeros on the diagonal, and the determinant of $(\xmat{I} - \xmat{W}_2)$ must not be zero. For the abovementioned process, the $i$-th component of $\xvec{h}$ is given by
\begin{equation*}
h(\xvec{s}_i) = \alpha_i + \sum_{v=1}^{n} w_{1,iv} Y(\xvec{s}_v)^2 + \sum_{v=1}^{n} w_{2,iv} h(\xvec{s}_v) \, .
\end{equation*}
Consequently, this spatial GARCH process incorporates a spatial autoregressive and moving-average part in the conditional variance. However, the moments of this process are not straightforward; thus, this process should be considered in more detail in the future. A further possible extension would be a multivariate spatial process with conditional heteroscedasticity; i.e., we do not observe a univariate random variable at each location but rather a vector of observations.

For the introduced spatial ARCH process, we derived the required conditions such that the process is well defined. In particular, certain assumptions regarding the convergence of $\xmat{A}^{2k}$ are necessary if the weighting matrix is not triangular. Furthermore, we analyzed the moments of this new spatial model and proposed an estimation strategy based on the maximum likelihood approach. Via extensive simulation studies, the performance of this estimator is illustrated. To focus on empirical problems, we discussed possible spatial weighting schemes in detail.

Moreover, we introduced a spatial autoregressive process with heteroscedastic residuals (SARspARCH). In particular, we applied this process to the cancer death rate in all U.S. counties except Alaska and Hawaii. For this empirical example, we included environmental, economic, and health-behavioral covariates. Comparing the estimation results of a spatial autoregressive (SAR) and the proposed SARspARCH process, one might observe that the regression coefficients are slightly different. In particular, the effect implied by the number of smokers is underestimated if we do not account for heteroscedastic residuals. Whereas the estimated coefficient equals 0.61 for the SAR model, the estimate is 0.32 for the SARspARCH model. In the future, it would be interesting to analyze whether the estimators of an SAR process are biased if the variance of the residuals exhibit spatial clusters. Moreover, the sensitivity of our process and the introduced maximum likelihood estimator should be analyzed in more detail with respect to the choice of the weighting matrices. In particular, the focus should be on the assumption of an oriented process, i.e., in the case when the assumed weighting matrix is strictly triangular, although the process is not oriented. Moreover, the performance of the likelihood estimator of the parameters of an SAR model under spatial conditional heteroscedasticity should be critically examined, as we noted above.

\section*{Appendix}

\begin{appendix}


\section{Proofs}

\begin{proof}[Proof of Theorem \ref{th:f_eps}]
We observe that for $i \in \{1,\ldots,n\}$
\begin{eqnarray}
Y(\xvec{s}_i)^2 & = & \varepsilon(\xvec{s}_i)^2 h(\xvec{s}_i) \notag \\
                & = & \alpha_i \varepsilon(\xvec{s}_i)^2 + \varepsilon(\xvec{s}_i)^2 \sum_{v=1}^{n} w_{iv} \underbrace{Y(\xvec{s}_v)^2}_{=h(\xvec{s}_v)\varepsilon(\xvec{s}_v)^2} \notag \\
                & = & \alpha_i \varepsilon(\xvec{s}_i)^2 + \varepsilon(\xvec{s}_i)^2 \sum_{v=1}^{n} \alpha_v w_{iv} \varepsilon(\xvec{s}_v)^2 \notag \\
                &   & + \varepsilon(\xvec{s}_i)^2 \sum_{v=1}^{n} w_{iv} \varepsilon(\xvec{s}_v)^2 \sum_{\substack{j=1}}^{n} w_{vj} Y(\xvec{s}_j)^2 \, . \label{eq:proof_th1}
\end{eqnarray}
\eqref{eq:proof_th1} can be rewritten in matrix notation as follows:
\begin{equation*}
\xvec{\eta} = \left( \xmat{I} - \xmat{A}^2 \right) \xvec{Y}^{(2)} \, .
\end{equation*}
The system of linear equations has a unique solution if \eqref{eq:det} is fulfilled. Thus,\linebreak $\xvec{Y}(\xvec{s}_1)^2,\ldots,\xvec{Y}(\xvec{s}_n)^2$ are uniquely defined by $\xvec{\varepsilon}(\xvec{s}_1)^2,\ldots,\xvec{\varepsilon}(\xvec{s}_n)^2$. Because $\xvec{h} = \xvec{\alpha} + \xmat{W}(\xmat{I} - \xmat{A}^2)^{-1}\xvec{\eta}$ and $\xvec{Y} = \text{diag}(\xvec{h})^{1/2}\xvec{\varepsilon}$, the result follows.
\end{proof}

\begin{proof}[Proof of Theorem \ref{th:spARCH}]
The result is obvious because all elements of $\xvec{\eta}$ are nonnegative.
\end{proof}

\begin{proof}[Proof of Lemma \ref{lemma:triangular}]
If $\xmat{W}$ is a lower triangular matrix, it is nilpotent because $\xmat{W}^n=\xmat{0}$. The same holds for the matrix $\xmat{A}$; i.e., $\xmat{A}^n = \xmat{0}$. Because $rk(\xmat{I} - \xmat{A}^2)=n$  and $(\xmat{I} - \xmat{A})^{-1} = \xmat{I} + \xmat{A} + \ldots + \xmat{A}^{n-1}$, it follows that
\begin{equation}\label{eq:Inv}
(\xmat{I} - \xmat{A}^2)^{-1} = \xmat{I} + \xmat{A}^2 + \ldots + \xmat{A}^{2[(n-1)/2]} .
\end{equation}
All elements of $\xmat{A}$ are nonnegative; thus, the result follows straightforwardly.
\end{proof}

\begin{proof}[Proof of Lemma \ref{lemma:nontriangular}]
We make use of Theorem 18.2.16 of \cite{Harville97}. Thus, if \linebreak$\lim_{k\rightarrow\infty} \xmat{A}^{2k} = \xmat{0}$,
it follows that $\det\left( \xmat{I} - \xmat{A}^2 \right) \neq 0$. Consequently, Theorem \ref{th:f_eps} can be applied. Moreover, it holds that
\begin{equation*}
\left( \xmat{I} - \xmat{A}^2 \right)^{-1} = \sum_{v=0}^\infty (\xmat{A}^2)^v \, .
\end{equation*}
Because all elements of $\xmat{A}$ are nonnegative, it follows that all components of the matrix $\left( \xmat{I} - \xmat{A}^2 \right)^{-1}$ are also nonnegative.
\end{proof}

\begin{proof}[Proof of Theorem \ref{th:distr_sym}]
We utilize \eqref{eq:Y2} and obtain
\begin{equation*}
\xvec{h} = \xvec{\alpha} + \xmat{W} ( \xmat{I}_n - \xmat{A}^2 )^{-1} \xvec{\eta} = k(\varepsilon(\xvec{s}_1)^2,\ldots, \varepsilon(\xvec{s}_n)^2) \, .
\end{equation*}
Consequently,
\begin{eqnarray*}
\xvec{Y}^\prime
& = &
\text{diag}(k(\varepsilon(\xvec{s}_1)^2,\ldots, \varepsilon(\xvec{s}_n)^2)) \; (\varepsilon(\xvec{s}_1),\ldots,\varepsilon(\xvec{s}_n))^\prime \\
& \stackrel{d}{=} &
\text{diag}(k( ( (-1)^{v_1} \varepsilon(\xvec{s}_1))^2,\ldots, ( (-1)^{v_n} \varepsilon(\xvec{s}_n))^2)) \; 
( (-1)^{v_1} \varepsilon(\xvec{s}_1),\ldots, (-1)^{v_n} \varepsilon(\xvec{s}_n))^\prime \\
& = &
( (-1)^{v_1} Y(\xvec{s}_1),\ldots, (-1)^{v_n} Y(\xvec{s}_n))^\prime .
\end{eqnarray*}
Thus, the result is proved.
\end{proof}

\begin{proof}[Proof of Lemma \ref{lemma:moments1}]
First, let $||.||$ be an arbitrary induced matrix norm. Because
\begin{equation*}
||\xvec{Y}^{(2)}||^r \le ||(\xmat{I} - \xmat{A}^2)^{-1} ||^r \; ||\xvec{\eta}||^r \, ,
\end{equation*}
it follows that
\begin{equation*}
E( ||\xvec{Y}^{(2)}||^r ) \le \sqrt{ E( ||(\xmat{I} - \xmat{A}^2)^{-1} ||^{2r} ) \; E( ||\xvec{\eta}||^{2r} ) } .
\end{equation*}
In (\ref{eq:Inv}), it is shown that
\begin{equation*}
(\xmat{I} - \xmat{A}^2)^{-1} = \xmat{I} + \xmat{A}^2 + \ldots + \xmat{A}^{2[(n-1)/2]} .
\end{equation*}
Consequently,
\begin{eqnarray*}
||(\xmat{I} - \xmat{A}^2)^{-1}|| & \le & ||\xmat{I}|| + ||\xmat{A}^2|| + \ldots + ||\xmat{A}^{2[(n-1)/2]}|| \\
& \le & \sum_{v=0}^{[(n-1)/2]} ||\xmat{A}||^{2v} \\
& \le & \sum_{v=0}^{[(n-1)/2]} ||\mbox{diag}(\varepsilon(\xvec{s}_1)^2,\ldots,\varepsilon(\xvec{s}_n)^2)||^{2v} \; ||\xmat{W}||^{2v}
 \end{eqnarray*}
and by Jensen's inequality,
\begin{eqnarray*}
||(\xmat{I} - \xmat{A}^2)^{-1}||^{2r} & \le & \left( \left[\frac{n-1}{2}\right] + 1 \right)^{2r-1} \;
\sum_{v=0}^{[(n-1)/2]} ||\mbox{diag}(\varepsilon(\xvec{s}_1)^2,\ldots,\varepsilon(\xvec{s}_n)^2)||^{4rv} \; ||\xmat{W}||^{4rv} \, .
\end{eqnarray*}
This leads to
\begin{footnotesize}
\begin{eqnarray*}
E(||(\xmat{I} - \xmat{A}^2)^{-1}||^{2r}) & \le &
\left( \left[\frac{n-1}{2}\right] + 1 \right)^{2r-1} \;
\sum_{v=0}^{[(n-1)/2]}  \; ||\xmat{W}||^{4rv} \; E\left(
||\mbox{diag}(\varepsilon(\xvec{s}_1)^2,\ldots,\varepsilon(\xvec{s}_n)^2)||^{4rv} \right) .
\end{eqnarray*}
\end{footnotesize}
Taking the norm $||.||_1$, we obtain that
\[  E\left(  ||\mbox{diag}(\varepsilon(\xvec{s}_1)^2,\ldots,\varepsilon(\xvec{s}_n)^2)||_1^{4rv} \right)
= \max\limits_{1 \le i \le n} E( \varepsilon(\xvec{s}_i)^{8rv} ) . \]
This shows that for the existence of the upper bound, it is required that $E( \varepsilon(\xvec{s}_i)^{8r[(n-1)/2]} ) $ must exist. For the existence of $E(||\xvec{\eta}||^{2r})$, it is sufficient that $E( \varepsilon(\xvec{s}_i)^{4r} )$ exists.

Regarding b), one can see that $E(Y(\xvec{s}_i)^{2v-1}) = 0$ because the distribution is symmetric, and the moments exist.
$(Y(\xvec{s}_1),\ldots, Y(\xvec{s}_n))$ and $(-Y(\xvec{s}_1), Y(\xvec{s}_2),\ldots, Y(\xvec{s}_n))$ have the same distribution. Thus, it follows that
\begin{equation*}
E(Y(\xvec{s}_1)^{2v-1} | Y(\xvec{s}_2),\ldots, Y(\xvec{s}_n) )  = E(- Y(\xvec{s}_1)^{2v-1} | Y(\xvec{s}_2),\ldots, Y(\xvec{s}_n) ) \, .
\end{equation*}
Consequently, this quantity is equal to zero.
\end{proof}

\begin{proof}[Proof of Theorem \ref{th:moments}]
Now,
\begin{equation*}
|| \left(Y(\xvec{s}_1)^2, \ldots, Y(\xvec{s}_n)^2\right) || \; \leq \; ||(\xmat{I} - \xmat{A}^2)^{-1}|| \;
|| \xvec{\eta} || \; \leq \; \lambda \; ||\xvec{\eta}|| \, .
\end{equation*}
Choosing the norm $|| \cdot ||_2$, we see that the $2r$-th moment is finite.

The proof of part $a_2$) follows as in the above lemma.

To prove b), we apply the representation given in the proof of Theorem \ref{th:spARCH}; i.e.,
\begin{equation*}
\xvec{Y}^{(2)} = \sum_{v=0}^\infty \xmat{A}^{2v} \xvec{\eta} \, .
\end{equation*}
Now,
\begin{equation*}
|| \left(Y(\xvec{s}_1)^2, \ldots, Y(\xvec{s}_n)^2\right) || \; \leq \; \sum_{v=0}^{\infty} || \xmat{A}^2 ||^v
|| \xvec{\eta} || \; \leq \; \frac{1}{1-\lambda} ||\xvec{\eta}|| \, .
\end{equation*}
This completes the proof.
\end{proof}

\begin{proof}[Proof of Theorem \ref{th:triangularW}]
Because $\xmat{W}$ is a strictly triangular matrix, it follows that
\begin{equation*}
\det\left( \left( \frac{\partial y_j/\sqrt{h}_j}{\partial y_i} \right)_{i,j=1,\ldots,n}\right) = \frac{1}{\prod\limits_{j=1}^{n} \sqrt{h_j}} \, .
\end{equation*}
with $h_j = \alpha_j + \sum_{v=1}^{j-1} w_{jv} Y(\xvec{s}_v)^2$. Let $\xvec{Y}_k =(Y(\xvec{s}_1),\ldots,Y(\xvec{s}_k))^\prime$. Then,
\begin{equation*}
f_{\xvec{Y}_k}(\xvec{y}) = \prod\limits_{j=1}^{k} \frac{1}{\sqrt{h_j}} f_{\varepsilon(\xvec{s}_j)}\left( \frac{y_j}{\sqrt{h_j}}\right) \, .
\end{equation*}
Thus,
\begin{eqnarray*}
&& E(Y(\xvec{s}_k)^2 | Y(\xvec{s}_j), j = 1, \ldots, k-1) \\
& = &  \frac{1}{f_{Y(\xvec{s}_1),\ldots,Y(\xvec{s}_{k-1})}(y_1, \ldots, y_{k-1})}
\int\limits_{-\infty}^{\infty} y_k^2 \prod\limits_{j=1}^{k} \frac{1}{\sqrt{h_j}} f_{\varepsilon(\xvec{s}_j)} \left( \frac{y_j}{\sqrt{h_j}}\right) \, d \, y_k  \\
& = & \int\limits_{-\infty}^{\infty} y_k^2 \frac{1}{\sqrt{h_k}} f_{\varepsilon(\xvec{s}_k)} \left( \frac{y_k}{\sqrt{h_k}}\right) \, d \, y_k .
\end{eqnarray*}
Because it is assumed that $Var(\varepsilon(\xvec{s}_k)) = 1$ for all $k$, it follows that
\begin{equation*}
E(Y(\xvec{s}_k)^2 | Y(\xvec{s}_j), j = 1, \ldots, k-1) = h_k \, .
\end{equation*}
\end{proof}

\end{appendix}


\end{document}